\newcommand{\rmnum}[1]{\romannumeral #1}
\newcommand{\Rmnum}[1]{\expandafter\@slowromancap\romannumeral #1@}
\newtheorem{theorem}{Theorem}[section]
\newtheorem{lemma}[theorem]{Lemma}
\theoremstyle{definition}
\newtheorem{defn}[theorem]{Definition}
\newtheorem{exa}[theorem]{Example}
\newtheorem{corollary}[theorem]{Corollary}
\newtheorem{prop}[theorem]{Proposition}
\theoremstyle{remark}
\newtheorem{remark}[theorem]{Remark}
\numberwithin{equation}{section}
\begin{document}

\title{PBW-basis for universal enveloping algebras of differential graded Poisson algebras}

\author{XianGuo Hu}
\address{Department of Mathematics, Zhejiang Normal University, Jinhua, Zhejiang, 321004 P.R. China}
\email{2015210420@zjnu.edu.cn}

\author{Jiafeng L\"{u}$^*$}
\address{Department of Mathematics, Zhejiang Normal University, Jinhua, Zhejiang, 321004 P.R. China}
\email{jiafenglv@zjnu.edu.cn}

\author{Xingting Wang}
\address{Wang: Department of Mathematics, Temple University, Philadelphia, 19122, USA }
\email{xingting@temple.edu}

\thanks{This work was supported by National Natural Science Foundation of China (Grant No.11571316) and Natural
Science Foundation of Zhejiang Province (Grant No. LY16A010003).}
\thanks{*corresponding author}
\subjclass[2010]{16E45, 16S10, 17B35, 17B63}

\keywords{differential graded Poisson algebras, universal enveloping
algebras, PBW-basis, simple differential graded Poisson module}

\begin{abstract}
For any differential graded (DG for short) Poisson algebra $A$ given by generators
and relations, we give a ``formula" for computing the universal
enveloping algebra $A^e$ of $A$. Moreover, we prove that $A^e$ has a
Poincar\'{e}-Birkhoff-Witt basis provided that $A$ is a graded commutative polynomial
algebra. As an application of the PBW-basis, we show that a DG symplectic ideal of a DG Poisson algebra $A$ is the annihilator of a simple DG Poisson $A$-module, where $A$ is the DG Poisson homomorphic image of a DG Poisson algebra $R$ whose underlying algebra structure is a graded commutative polynomial algebra.
\end{abstract}
\maketitle

\medskip
\section{Introduction}
The notion of Poisson algebras arises naturally in the study of Hamiltonian mechanics and Poisson
geometry. Recently, many important generalizations on Poisson algebras have been obtained in both
commutative and noncommutative settings: Poisson orders
\cite{BG}, Poisson PI algebras \cite{MPR}, graded Poisson algebras
\cite{CFL}, double Poisson algebras \cite{V}, Novikov-Poisson
algebras \cite{Xu}, Quiver Poisson algebras \cite{YYZ},
noncommutative Leibniz-Poisson algebras \cite{CD}, left-right
noncommutative Poisson algebras \cite{CDL}, noncommutative Poisson
algebras \cite{X} and differential graded Poisson algebras
\cite{LWZ}, etc. An interesting and practical idea to develop Poisson algebras is to study Poisson universal
enveloping algebras, which was first introduced by Oh in 1999
\cite{O}, and later Oh, Park and Shin studied the
Poincar\'{e}-Birkhoff-Witt basis (PBW-basis for short) for Poisson
universal enveloping algebras \cite{OPS}. Since then, Poisson universal enveloping algebras have been studied in a series of papers \cite{U, YYY}. In particular, the second author and the third author of the present paper studied the universal enveloping algebras of Poisson Hopf algebras and Poisson Ore-extensions \cite{LWZ1, LWZ2}. Our main aim of this paper is
to study the PBW-basis for universal enveloping algebras of
DG Poisson algebras.

In \cite{HL, HLe, J, Va}, Hodges, Levasseur, Joseph and Vancliff proved that symplectic leaves of certain Poisson varieties
correspond bijectively to primitive ideals of the respective quantum algebras. After that, Oh defined the symplectic ideal of a
Poisson algebra and proved that there is a one to one
correspondence between the primitive ideals of quantum $2\times 2$
matrix algebra and the symplectic ideals of a Poisson algebra
constructed appropriately \cite{Oh}. Moreover, he and two other authors showed that the
symplectic ideal of a Poisson homomorphic image of a Poisson polynomial algebra
$\Bbbk[x_1, ..., x_n]$ is the annihilator of a simple Poisson
module. Motivated by the DG version for symplectic ideals, there is a natural question: Is a DG symplectic ideal of a DG Poisson algebra $A$ the annihilator of a simple DG Poisson $A$-module? The present paper gives a positive answer.

The paper is organized as follows. In Section 2, we briefly review some basic concepts related to DG Poisson algebras, DG Poisson modules and universal enveloping algebras of DG Poisson algebras. In particular, we construct the universal enveloping algebra of any DG Poisson algebra given by generators and relations. Section 3 is devoted to the study of PBW-basis for universal enveloping algebras. To be specific, for a DG Poisson algebra $R$ whose underlying algebra structure is a graded commutative polynomial algebra, by using Gr\"{o}bner-Shirshov basis theory developed in \cite{KL} and \cite{KLe}, we prove that the universal enveloping algebra $R^e$ has a PBW-basis, which is analogous to the PBW-basis for universal enveloping algebras of Lie algebras. In the last section, we focus on the simple DG Poisson module. As an application of the PBW-basis theorem for universal enveloping algebras of DG Poisson algebras, we prove that a DG symplectic ideal of a DG Poisson algebra $A$ is the annihilator of a simple DG Poisson $A$-module, where $A$ is the DG Poisson homomorphic image of $R$.

Throughout the whole paper, $\mathbb{Z}$ denotes the set of
integers, $\Bbbk$ denotes a base field and everything is over
$\Bbbk$ unless otherwise stated, all (graded) algebras are assumed
to have an identity and all (graded) modules are assumed to be
unitary.

\medskip
\section{Universal enveloping algebras of differential graded Poisson algebras}
In this section, first we briefly review some basic definitions and
properties of DG Poisson algebras and universal enveloping algebras, then we construct the universal
enveloping algebra of any DG Poisson algebra $A$ given by generators
and relations.

\subsection{DG Poisson algebras} By a graded algebra we mean
a $\mathbb{Z}$-graded algebra. A DG algebra is a graded algebra with
a $\Bbbk$-linear homogeneous map $d: A\rightarrow A$ of degree 1,
which is also a graded derivation. Any graded algebra can be viewed
as a DG algebra with differential $d=0$; in this case it is called a
DG algebra with trivial differential. Let $A$, $B$ be two DG
algebras and $f: A\rightarrow B$ be a graded algebra map of degree
zero. Then $f$ is called a DG algebra map if $f$ commutes with the
differentials.

\begin{defn}
Let $A$ be a graded $\Bbbk$-vector space. If there is a
$\Bbbk$-linear map
\begin{center}
$\{\cdot, \cdot\}: A\otimes A\rightarrow A$
\end{center}
of degree 0 such that:

\begin{enumerate}
\item[(\rmnum{1})] (graded antisymmetry): $\{a, b\}=-(-1)^{|a||b|}\{b, a\}$;

\item[(\rmnum{2})] (graded Jacobi identity): $\{a, \{b, c\}\}=\{\{a, b\},
c\}+(-1)^{|a||b|}\{b, \{a, c\}\}$,
\end{enumerate}
for any homogeneous elements $a, b, c\in A$, then $(A, \{\cdot, \cdot\})$ is
called a graded Lie algebra.
\end{defn}

\begin{defn} \cite{LWZ}
Let $(A, \cdot)$ be a graded $\Bbbk$-algebra. If there is a
$\Bbbk$-linear map
\begin{center}
$\{\cdot, \cdot\}: A\otimes A\rightarrow A$
\end{center}
of degree 0 such that

\begin{enumerate}
\item[(\rmnum{1})] $(A, \{\cdot, \cdot\})$ is a graded Lie algebra;

\item[(\rmnum{2})] (graded commutativity): $a\cdot b=(-1)^{|a||b|}b\cdot a$;

\item[(\rmnum{3})] (biderivation property): $\{a, b\cdot c\}=\{a, b\}\cdot
c+(-1)^{|a||b|}b\cdot \{a, c\}$,
\end{enumerate}
for any homogeneous elements $a, b, c\in A$, then $A$ is called a
graded Poisson algebra. If in addition, there is a $\Bbbk$-linear
homogeneous map $d: A\rightarrow A$ of degree 1 such that $d^2=0$ and

\begin{enumerate}
\item[(\rmnum{4})] (graded Leibniz rule for bracket): $d(\{a, b\})=\{d(a),
b\}+(-1)^{|a|}\{a, d(b)\}$;

\item[(\rmnum{5})] (graded Leibniz rule for product): $d(a\cdot b)=d(a)\cdot
b+(-1)^{|a|}a\cdot d(b)$,
\end{enumerate}
for any homogeneous elements $a, b\in A$, then $A$ is called a DG
Poisson algebra, which is usually denoted by $(A, \cdot, \{\cdot, \cdot\},
d)$, or simply by $(A, \{\cdot, \cdot\}, d)$ or $A$ if no confusions arise.
\end{defn}

\begin{remark}
For a DG algebra $B$, assume throughout the paper that $B_P$ is the
DG Poisson algebra $B$ with the ``standard graded Lie bracket": $[a,
b]=ab-(-1)^{|a||b|}ba$ for any homogeneous elements $a, b\in B$.
\end{remark}

Note that an ideal $I$ of DG Poisson algebra $A$ is called a DG
Poisson ideal if $d(I)\subseteq I$, $\{I, A\}\subseteq I$. Let $B$
be another DG Poisson algebra. A graded algebra map
$\rho: A\rightarrow B$ is said to be a DG Poisson algebra map if
$\rho\circ d_A=d_B\circ \rho$ and $\rho(\{a, b\}_A)=\{\rho(a),
\rho(b)\}_B$ for all homogeneous elements $a, b\in A$. For example,
if $I$ is a DG Poisson ideal of $A$, then the canonical projection
$\pi_{I}: A\rightarrow A/I$ is a DG Poisson algebra map.

We denote by \textbf{DG(P)A} the category of DG (Poisson) algebras
whose morphism space consists of DG (Poisson) algebras map.

Now, we recall the definition of DG Poisson modules over DG Poisson algebras.
\begin{defn} \cite{LWZ}\label{defn1}
Let $A=(A, \cdot, \{\cdot, \cdot\}, d)\in \textbf{DGPA}$, and $M$ be a left
graded module over $A$. We call $M$ a left DG Poisson module over
$A$ provided that
\begin{enumerate}
\item[(\rmnum{1})] $(M, \partial)$ is a left DG module over the DG algebra $A$. That is to say, there is a $\Bbbk$-linear map $\partial: M\rightarrow M$ of degree 1 such that $\partial^2=0$ and
\begin{center}
$\partial(am)=d(a)m+(-1)^{|a|}a\partial(m)$
\end{center}
for all homogeneous elements $a\in A$ and $m\in M$. Here $\partial$ is also called the differential of $M$.

\item[(\rmnum{2})] $M$ is a left graded Poisson module over the graded Poisson algebra $A$. That is to say, there is a $\Bbbk$-linear map $\{\cdot, \cdot\}_M: A\otimes M\rightarrow M$ of degree 0 such that
\begin{enumerate}
  \item[(\rmnum{1}a)] $\{a, bm\}_M=\{a, b\}_Am+(-1)^{|a||b|}b\{a, m\}_M$;
  \item[(\rmnum{1}b)] $\{ab, m\}_M=a\{b, m\}_M+(-1)^{|a||b|}b\{a, m\}_M$, and
  \item[(\rmnum{1}c)] $\{a, \{b, m\}_M\}_M=\{\{a, b\}_A, m\}_M+(-1)^{|a||b|}\{b, \{a, m\}_M\}_M$,
\end{enumerate}
for all homogeneous elements $a, b\in A$ and $m\in M$.

\item[(\rmnum{3})] the $\Bbbk$-linear map $\partial$ is compatible with the bracket $\{\cdot, \cdot\}_M$. That is, we have
\begin{center}
$\partial(\{a, m\}_M)=\{d(a), M\}_M+(-1)^{|a|}\{a, \partial(m)\}_M$,
\end{center}
for all homogeneous elements $a\in A$ and $m\in M$.
\end{enumerate}
Similarly, a left DG Poisson module $M$ over a DG Poisson algebra $A$ is usually denoted by $(M, \{\cdot, \cdot\}_M, \partial)$, or simply by $M$ if there are no confusions.
\end{defn}
\begin{defn}
Let $A=(A, \cdot, \{\cdot, \cdot\}, d)\in \textbf{DGPA}$, and $(M, \{\cdot, \cdot\}_M,
\partial)$ be a left DG Poisson module over $A$. A left graded
submodule $N\leq M$ is called a left DG Poisson submodule provided
that $\partial(N)\subseteq N$ and $\{A, N\}_M\subseteq N$, which is
usually denoted by $N\leq_pM$.
\end{defn}

\subsection{Universal enveloping algebras of DG Poisson algebras}
In this subsection, we recall the definition and some properties of
the universal enveloping algebra of a DG Poisson algebra $A=(A, \cdot,
\{\cdot, \cdot\}, d)$.
\begin{defn} \cite{LWZ}
Let $A=(A, \cdot, \{\cdot, \cdot\}, d)\in \textbf{DGPA}$ and $(A^{ue},
\partial)\in \textbf{DGA}$. We call $(A^{ue}, \partial)$ is a
universal enveloping algebra of $A$ if there exist a DG algebra map
$\alpha: (A, d)\rightarrow (A^{ue}, \partial)$ and a DG Lie algebra
map $\beta: (A, \{\cdot, \cdot\}, d)\rightarrow (A_P^{ue}, [\cdot, \cdot],
\partial)$ satisfying
\begin{align*}
\alpha(\{a, b\})&=\beta(a)\alpha(b)-(-1)^{|a||b|}\alpha(b)\beta(a),\\
\beta(ab)&=\alpha(a)\beta(b)+(-1)^{|a||b|}\alpha(b)\beta(a),
\end{align*}
for any homogeneous elements $a, b\in A$, such that for any $(D,
\delta)\in \textbf{DGA}$ with a DG algebra map $f: (A, d)\rightarrow
(D, \delta)$ and a DG Lie algebra map $g: (A, \{\cdot, \cdot\}, d)\rightarrow
(D_P, [\cdot, \cdot], \delta)$ satisfying
\begin{align*}
f(\{a, b\})&=g(a)f(b)-(-1)^{|a||b|}f(b)g(a),\\
g(ab)&=f(a)g(b)+(-1)^{|a||b|}f(b)g(a),
\end{align*}
for all $a, b\in A$, then there exists a unique DG algebra map
$\phi: (A^{ue}, \partial)\rightarrow (D, \delta)$, making the diagram
\begin{eqnarray*}
\xymatrix {
 (A, \{\cdot, \cdot\}, d) \ar[rr]^{\alpha, ~\beta} \ar[dr]_{f, ~g}
                &  &    (A^{ue}, \partial) \ar[dl]^{\exists !\phi}    \\
                &   (D, \delta)               }
\end{eqnarray*}
``bi-commute", i.e., $\phi\alpha=f$ and $\phi\beta=g$.
\end{defn}
For an $A\in \textbf{DGPA}$, we denote by $(A^{ue}, \alpha, \beta)$
the universal enveloping algebra of $A$. Note that universal
enveloping algebra $A^{ue}$ of a DG Poisson algebra $A$ was defined
in order that a $\Bbbk$-vector space $M$ is a DG Poisson $A$-module
if and only if $M$ is a DG $A^{ue}$-module and that universal
enveloping algebra of $A$ is unique up to isomorphic (see \cite{LWZ}).

\begin{prop}
Let $(A^{ue}, \alpha, \beta)$ be the universal enveloping algebra of
a finitely generated DG Poisson algebra $A$. Then $\alpha$ is
injective.
\end{prop}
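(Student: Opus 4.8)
The plan is to realize $A$ as a module over itself and then apply the universal property of $A^{ue}$ to a faithful representation, forcing $\alpha$ to be injective. Let $\mathcal{E}=\mathrm{End}_\Bbbk(A)$ be the graded endomorphism algebra of $A$, whose degree-$n$ component consists of the $\Bbbk$-linear maps $A\to A$ that raise degree by $n$, and make it an object of \textbf{DGA} via the differential $\delta(\theta)=d\circ\theta-(-1)^{|\theta|}\theta\circ d$; one checks directly from $d^2=0$ that $\delta^2=0$ and that $\delta$ is a graded derivation, so that $(\mathcal{E},\delta)\in\textbf{DGA}$. Define degree-$0$ graded linear maps $f,g\colon A\to\mathcal{E}$ by $f(a)=L_a$ and $g(a)=H_a$, where $L_a(x)=a\cdot x$ is left multiplication and $H_a(x)=\{a,x\}$ is the Hamiltonian operator of $a$; both are homogeneous of degree $|a|$ because $d$ has degree $1$ and $\{\cdot,\cdot\}$ has degree $0$.

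Next I would check that $(\mathcal{E},\delta,f,g)$ is a legitimate test datum for the universal property. That $f$ is a DG algebra map is the pair of identities $L_{ab}=L_aL_b$ (associativity of the product) and $\delta(L_a)=L_{d(a)}$ (the graded Leibniz rule for the product); that $g$ is a DG Lie algebra map into $\mathcal{E}_P$ is the pair $H_aH_b-(-1)^{|a||b|}H_bH_a=H_{\{a,b\}}$ (exactly the graded Jacobi identity) and $\delta(H_a)=H_{d(a)}$ (the graded Leibniz rule for the bracket). The two mixed relations
\begin{align*}
f(\{a,b\})&=g(a)f(b)-(-1)^{|a||b|}f(b)g(a),\\
g(ab)&=f(a)g(b)+(-1)^{|a||b|}f(b)g(a),
\end{align*}
are verified by evaluating both sides at an arbitrary $x\in A$: the first collapses using only the biderivation property, and the second using graded antisymmetry, the biderivation property and graded commutativity of the product. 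Conceptually this whole package of identities is nothing but the statement that $A$, equipped with $L$, $H$ and $\partial=d$, is a left DG Poisson module over itself in the sense of Definition \ref{defn1}.

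With the test datum in hand, the universal property yields a unique DG algebra map $\phi\colon(A^{ue},\partial)\to(\mathcal{E},\delta)$ with $\phi\alpha=f$ and $\phi\beta=g$. The map $f$ is injective: since $A$ has an identity $1$, evaluation at $1$ gives $f(a)(1)=a\cdot 1=a$, so $f(a)=0$ forces $a=0$. As $f=\phi\alpha$ is injective, $\alpha$ is injective, proving the claim. The evaluation-at-$1$ step is the easy heart of the argument; the real work, and hence the main obstacle, is the careful sign bookkeeping required to confirm all of the compatibility identities above (especially the two mixed relations and the Jacobi-type identity for $g$) and to verify that $(\mathcal{E},\delta)$ genuinely lies in \textbf{DGA}. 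Finite generation of $A$ enters only insofar as it guarantees the existence of $A^{ue}$ and the availability of its universal property; it is not otherwise used in establishing injectivity.
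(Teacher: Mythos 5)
Your proposal is correct and follows essentially the same route as the paper: both realize $A$ as a DG Poisson module over itself via left multiplication and Hamiltonian operators inside $\mathrm{End}_\Bbbk(A)$ with the commutator differential, invoke the universal property to factor these through $\alpha$ and $\beta$, and conclude injectivity by evaluating at $1$. The only difference is presentational — you defer the sign verifications that the paper writes out in full.
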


\begin{proof}
For every $a\in A$, define $\gamma(a)$, $\delta(a)\in
End_{\Bbbk}(A)$ by
\begin{center}
$\gamma(a)(b)=ab, \delta(a)(b)=\{a, b\}$,
\end{center}
for all $b\in A$. As we all know, $End_{\Bbbk}(A)$ is a graded
endomorphism ring if $A$ is a finitely generated $\Bbbk$-module,
because
 \begin{center}
$End_{\Bbbk}(A)=H(A, A)=\oplus_{n\in \mathbb{Z}}H(A, A)_n$
\end{center}
and
 \begin{center}
$H(A, A)_n=\{\psi\in Hom_{\Bbbk}(A, A) | ~\psi(A_i)\subseteq
A_{i+n}\}$.
\end{center}
Let
\begin{center}
$d'\in End(End_{\Bbbk}(A))$, $d'(f)(a)=d(f(a))-(-1)^{|f|}f(d(a))$,
\end{center}
for any elements $f\in End_{\Bbbk}(A)$ and $a\in A$, then
$End_{\Bbbk}(A)$ is a DG algebra. It is easy to proof that $\gamma$
is a graded algebra map. Moreover, we have
\begin{center}
 $(\gamma d)(a)(b)=\gamma(d(a))(b)=d(a)b$
 \end{center}
 and
\begin{center}
 $(d'\gamma)(a)(b)=d'(\gamma(a))(b)=d(\gamma(a)(b))-(-1)^{|\gamma(a)|}\gamma(a)(d(b))=d(ab)-(-1)^{|a|}ad(b),$
\end{center}
for all $a, b\in A$, then $\gamma$ is a DG algebra map by using
graded Leibniz rule for product. In fact, for any homogeneous
elements $a, b, c\in A$, we have
\begin{align*}
[\delta(a), \delta(b)](c)&=\delta(a)\delta(b)(c)-(-1)^{|a||b|}\delta(b)\delta(a)(c)\\
&=\{a, \{b, c\}\}-(-1)^{|a||b|}\{b, \{a, c\}\},\\
\delta(\{a, b\})(c)&=\{\{a, b\}, c\},\\
(\delta d)(a)(b)&=\delta(d(a))(b)=\{d(a), b\}
\end{align*}
and
\begin{center}
 $(d'\delta)(a)(b)=d'(\delta(a))(b)=d(\delta(a)(b))-(-1)^{|a|}\delta(a)(d(b))=d(\{a, b\})-(-1)^{|a|}\{a, d(b)\}.$
 \end{center}
Note that $A$ is a DG Poisson algebra, by the graded Jacobi
identity and graded Leibniz rule for bracket, we have
\begin{center}
 $\{a, \{b, c\}\}-(-1)^{|a||b|}\{b, \{a, c\}\}=\{\{a, b\}, c\}$
\end{center}
and
\begin{center}
 $d(\{a, b\})-(-1)^{|a|}\{a, d(b)\}=\{d(a), b\}$,
\end{center}
which imply that $\delta$ is a DG Lie algebra map. On the other hand, we see
that
\begin{equation}
\begin{split}
\delta(a)\gamma(b)(c)-(-1)^{|a||b|}\gamma(b)\delta(a)(c)&=\delta(a)(bc)-(-1)^{|a||b|}\gamma(b)\{a,
c\} \\&=\{a, bc\}-(-1)^{|a||b|}b\{a, c\}\\&=\gamma(\{a, b\})(c)
\end{split}\nonumber
\end{equation}
and
\begin{equation}
\begin{split}
\gamma(a)\delta(b)(c)+(-1)^{|a||b|}\gamma(b)\delta(a)(c)&=\gamma(a)\{b,
c\}+(-1)^{|a||b|}\gamma(b)\{a, c\}\\&=a\{b, c\}+(-1)^{|a||b|}b\{a,
c\}\\&=\delta(ab)(c),
\end{split}\nonumber
\end{equation}
for all $a, b, c\in A$. Hence there exists a DG algebra map $\phi$
from $A^{ue}$ into $End_{\Bbbk}(A)$ such that $\phi\alpha=\gamma$
and $\phi\beta=\delta$. If $a\in ker\alpha$ then
$0=\phi\alpha(a)=\gamma(a)$, and so $0=\gamma(a)(1)=a$. It completes
the proof.
\end{proof}

Henceforce, we identify the DG algebra homomorphic image of a
finitely generated DG Poisson algebra $A$ under $\alpha$ to $A$ and
denote $\alpha(a)$ by $a$ for all $a\in A$.

\subsection{Construction of $A^e$} For any DG Poisson algebra $A$
given by generators and relations, we give a ``formula" for
computing the universal enveloping algebra $A^e$ of $A$.

\subsubsection{``Anti-differential"} Let $V$ be a graded
$\Bbbk$-vector space with a homogeneous $\Bbbk$-basis
$\{x_{\alpha}: \alpha\in \Lambda\}$ and

$$R=\frac{T(V)}{\langle x_{\alpha}\otimes
x_{\beta}-(-1)^{|x_{\alpha}||x_{\beta}|}x_{\beta}\otimes x_{\alpha}|
~\forall \alpha, ~\beta \in\Lambda \rangle}$$

be a DG Poisson algebra with differential $d$ and Poisson bracket
$\{\cdot, \cdot\}$. Here $T(V)$ is the tensor algebra of $V$ over $\Bbbk$ and
$|x|$ denotes the degree of the homogeneous element $x$ of $R$.

Now for any $\alpha\in \Lambda$, we define a $\Bbbk$-linear map
\begin{center}
$\psi_{\alpha}: R\rightarrow R$
\end{center}
such that
\begin{center}
$\psi_{\alpha}(x_{\beta})=\delta_{\alpha\beta}$ \quad and \quad
$\psi_{\alpha}(ab)=a\psi_{\alpha}(b)+(-1)^{|a||b|}b\psi_{\alpha}(a)$,
\end{center}
for all homogeneous elements $a, b\in R$ and $\beta\in \Lambda$. The
$\Bbbk$-linear map are usually called anti-differential of $R$.

\begin{remark}
We have the following two observations:
\begin{itemize}
  \item It is easy to see that $|\psi_{\alpha}|=-|x_{\alpha}|$ for
  any $\alpha\in \Lambda$, and that the $\Bbbk$-linear map
  $\psi_{\alpha}$ for any $\alpha\in \Lambda$ is well-defined on
  $R$ since
  \begin{equation}
\begin{split}
\psi_{\alpha}(ab-(-1)^{|a||b|}ba)&=\psi_{\alpha}(ab)-(-1)^{|a||b|}\psi_{\alpha}(ba)\\
&=a\psi_{\alpha}(b)+(-1)^{|a||b|}b\psi_{\alpha}(a)-(-1)^{|a||b|}(b\psi_{\alpha}(a)+(-1)^{|a||b|}a\psi_{\alpha}(b))\\
&=0.
\end{split}\nonumber
\end{equation}

  \item If we set $\Lambda_{a}:=\{\alpha \in \Lambda| ~\psi_{\alpha}(a)\neq
  0\}$ for any homogeneous element $a\in R$, then $\Lambda_{a}$ is a
  finite set since $R$ obviously has a PBW-basis.
\end{itemize}
\end{remark}

Now we give some basic properties of these ``anti-differentials".
\begin{lemma}\label{lem2.10}
For any homogeneous element $a \in R$ and for all $\alpha, \beta\in
\Lambda$, we have
\begin{center}
\begin{equation}
\psi_{\alpha}(d(a))-d\psi_{\alpha}(a)-\sum_{\beta\in
\Lambda}(-1)^{|\psi_{\beta}(a)|}\psi_{\beta}(a)\psi_{\alpha}(d(x_{\beta}))=0.
\end{equation}
\end{center}
\end{lemma}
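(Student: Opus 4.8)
The identity to be established,
\[
\psi_{\alpha}(d(a))-d\psi_{\alpha}(a)-\sum_{\beta\in
\Lambda}(-1)^{|\psi_{\beta}(a)|}\psi_{\beta}(a)\,\psi_{\alpha}(d(x_{\beta}))=0,
\]
is linear in $a$ and involves only the derivation-type maps $\psi_{\alpha}$, the differential $d$, and the graded-commutative product of $R$. Since $R$ has a PBW-basis of (graded-commutative) monomials in the $x_{\beta}$, the natural strategy is induction on monomial length. The plan is to prove the statement first for generators $a=x_{\gamma}$, and then show that whenever the identity holds for two homogeneous elements $a$ and $b$ it holds for their product $ab$; by $\Bbbk$-linearity this suffices on all of $R$.

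First I would treat the base case $a=x_{\gamma}$. Here the computation is essentially a bookkeeping check: $\psi_{\alpha}(x_{\gamma})=\delta_{\alpha\gamma}$ is a scalar, so $d\psi_{\alpha}(x_{\gamma})=0$, while $\psi_{\beta}(x_{\gamma})=\delta_{\beta\gamma}$ collapses the sum over $\Lambda$ to the single term $\beta=\gamma$. What remains is to verify that $\psi_{\alpha}(d(x_{\gamma}))$ cancels against $(-1)^{|\psi_{\gamma}(x_{\gamma})|}\psi_{\gamma}(x_{\gamma})\,\psi_{\alpha}(d(x_{\gamma}))=\psi_{\alpha}(d(x_{\gamma}))$, which is immediate once the sign is checked, since $|\psi_{\gamma}(x_{\gamma})|=0$.

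For the inductive step I would write out both $\psi_{\alpha}(d(ab))$ and $d\psi_{\alpha}(ab)$ using the graded Leibniz rule for $d$ (axiom (v) of the DG Poisson structure) and the defining derivation property of $\psi_{\alpha}$, namely $\psi_{\alpha}(ab)=a\psi_{\alpha}(b)+(-1)^{|a||b|}b\psi_{\alpha}(a)$. Expanding each term produces four summands, and I would organize them so that the induction hypotheses for $a$ and for $b$ can be substituted into the two halves. The sum $\sum_{\beta}(-1)^{|\psi_{\beta}(a b)|}\psi_{\beta}(ab)\,\psi_{\alpha}(d(x_{\beta}))$ also splits, via $\psi_{\beta}(ab)=a\psi_{\beta}(b)+(-1)^{|a||b|}b\psi_{\beta}(a)$, into two matching sums over $\Lambda$. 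The remaining work is to confirm that, after substitution, every leftover term pairs off and cancels.

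The main obstacle will be the sign bookkeeping. Because $|\psi_{\beta}|=-|x_{\beta}|$ and the Koszul signs appear in the Leibniz rules for both $d$ and $\psi_{\alpha}$, one must track the degrees $|a|$, $|b|$, $|x_{\beta}|$, and the shifted degrees $|\psi_{\beta}(a)|=|a|-|x_{\beta}|$ carefully; it is easy to misplace a $(-1)$ that then spoils the cancellation. I would therefore fix a convention for each factor's degree at the outset, rewrite $(-1)^{|\psi_{\beta}(ab)|}$ in terms of $|a|$, $|b|$, $|x_{\beta}|$, and move scalars past homogeneous elements using graded commutativity with explicit signs, so that the two sums produced by the inductive hypotheses can be visibly identified. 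Once the signs are aligned the cancellation is forced, completing the induction.
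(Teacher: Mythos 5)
Your proposal is correct and follows essentially the same route as the paper: verify the identity on the generators $x_{\gamma}$ (where $d\psi_{\alpha}(x_{\gamma})=0$ and the sum collapses to the $\beta=\gamma$ term, cancelling $\psi_{\alpha}(d(x_{\gamma}))$), then perform the multiplicative induction step by expanding $\psi_{\alpha}(d(ab))$, $d\psi_{\alpha}(ab)$ and $\psi_{\beta}(ab)$ via the two graded Leibniz rules and regrouping the resulting terms into $b[\cdots]$ and $a[\cdots]$ multiples of the two induction hypotheses. The sign bookkeeping you flag is exactly where the paper's computation spends its effort, and it works out as you anticipate.
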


\begin{proof}
Observe that formula (2.1) is true on any $x_{\gamma}$ for all
$\gamma\in \Lambda$ since
\begin{eqnarray*}
LHS ~of (2.1)
&=&\psi_{\alpha}(d(x_{\gamma}))-d\psi_{\alpha}(x_{\gamma})-\sum_{\beta\in
\Lambda}(-1)^{|\psi_{\beta}(x_{\gamma})|}\psi_{\beta}(x_{\gamma})\psi_{\alpha}(d(x_{\beta}))\\
&=&\psi_{\alpha}(d(x_{\gamma}))-\psi_{\alpha}(d(x_{\gamma}))\\
&=&0.
\end{eqnarray*}

In order to prove that formula (2.1) is true for any homogeneous
element $a\in R$. It suffices to prove the formula (2.1) is true for
$ab$ provided that it is true for any homogeneous elements $a, b\in
R$. Thus, assume that we have the following two equations:
\begin{eqnarray*}
\psi_{\alpha}(d(a))-d\psi_{\alpha}(a)-\sum_{\beta\in
\Lambda}(-1)^{|\psi_{\beta}(a)|}\psi_{\beta}(a)\psi_{\alpha}(d(x_{\beta}))=0,\\
\psi_{\alpha}(d(b))-d\psi_{\alpha}(b)-\sum_{\beta\in
\Lambda}(-1)^{|\psi_{\beta}(b)|}\psi_{\beta}(b)\psi_{\alpha}(d(x_{\beta}))=0,\\
\end{eqnarray*}
for any homogeneous elements $a, b\in R$. We have
\begin{equation}
\begin{split}
&\psi_{\alpha}(d(ab))-d\psi_{\alpha}(ab)-\sum_{\beta\in
\Lambda}(-1)^{|\psi_{\beta}(ab)|}\psi_{\beta}(ab)\psi_{\alpha}(d(x_{\beta}))\\
=~~~~&\psi_{\alpha}(d(a)b+(-1)^{|a|}ad(b))-d(a\psi_{\alpha}(b)+(-1)^{|a||b|}b\psi_{\alpha}(a))\\
&-\sum_{\beta\in \Lambda}(-1)^{|\psi_{\beta}(ab)|}(a\psi_{\beta}(b)+(-1)^{|a||b|}b\psi_{\beta}(a))\psi_{\alpha}(d(x_{\beta}))\\
=~~~~&d(a)\psi_{\alpha}(b)+(-1)^{(|a|+1)|b|}b\psi_{\alpha}(d(a))+(-1)^{|a|}a\psi_{\alpha}(d(b))+(-1)^{|a|+|a|(|b|+1)}d(b)\psi_{\alpha}(a)\\
&-d(a)\psi_{\alpha}(b)-(-1)^{|a|}ad\psi_{\alpha}(b)-(-1)^{|a||b|}d(b)\psi_{\alpha}(a)-(-1)^{(|a|+1)|b|}bd\psi_{\alpha}(a)\\
&-\sum_{\beta\in
\Lambda}(-1)^{|\psi_{\beta}(ab)|}a\psi_{\beta}(b)\psi_{\alpha}(d(x_{\beta}))-\sum_{\beta\in
\Lambda}(-1)^{|\psi_{\beta}(ab)|+|a||b|}
b\psi_{\beta}(a)\psi_{\alpha}(d(x_{\beta}))\\
=~~~~&(-1)^{(|a|+1)|b|}b[\psi_{\alpha}(d(a))-d\psi_{\alpha}(a)-\sum_{\beta\in
\Lambda}(-1)^{|\psi_{\beta}(a)|}\psi_{\beta}(a)\psi_{\alpha}(d(x_{\beta}))]\\
&+(-1)^{|a|}a[\psi_{\alpha}(d(b))-d\psi_{\alpha}(b)-\sum_{\beta\in \Lambda}
(-1)^{|\psi_{\beta}(b)|}\psi_{\beta}(b)\psi_{\alpha}(d(x_{\beta}))]\\
=~~~~&0,
\end{split}\nonumber
\end{equation}
as required.
\end{proof}

\begin{lemma}\label{lem2.11}
We have
\begin{equation}
\{a, b\}=\sum_{\alpha\in \Lambda}\psi_{\alpha}(a)\{x_{\alpha}, b\},
\end{equation}
for any homogeneous elements $a, b\in R$.
\end{lemma}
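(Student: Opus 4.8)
The plan is to prove the identity by induction on the word length of $a$ as a monomial in the generators $\{x_{\alpha}\}$, after first reducing to monomials. Since $R$ is a graded commutative polynomial algebra, every homogeneous element is a $\Bbbk$-linear combination of monomials $x_{\alpha_1}\cdots x_{\alpha_k}$, and both sides of the claimed identity are $\Bbbk$-linear in $a$; hence it suffices to verify the formula when $a$ is such a monomial.

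For the base case I first treat constants and then generators. For $a\in\Bbbk$ we have $\{a,b\}=0$, while $\psi_{\alpha}(a)=0$ for every $\alpha$ (the latter because $\psi_{\alpha}(1)=\psi_{\alpha}(1\cdot 1)=2\psi_{\alpha}(1)$), so both sides vanish. For $a=x_{\gamma}$ a single generator, the right-hand side collapses via $\psi_{\alpha}(x_{\gamma})=\delta_{\alpha\gamma}$ to the single term $\{x_{\gamma},b\}$, matching the left-hand side.

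The crucial preliminary step is to establish that the bracket is a graded derivation in its \emph{first} argument, namely
\[
\{a_1a_2,b\}=a_1\{a_2,b\}+(-1)^{|a_1||a_2|}a_2\{a_1,b\}.
\]
This is not one of the defining axioms, since the biderivation property (\rmnum{3}) gives the Leibniz rule only in the second slot; I would therefore derive it by combining graded antisymmetry (\rmnum{1}) with the biderivation property applied to $\{b,a_1a_2\}$, and then using graded commutativity of the product in $R$ to reorder the resulting terms, recalling that $|\{x,y\}|=|x|+|y|$ since the bracket has degree $0$. This sign-tracking computation is the main technical obstacle: every factor of $(-1)$ must be accounted for, though the cancellations turn out to be clean once carried through.

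With the first-slot Leibniz rule in hand, the inductive step is immediate. Writing a length-$k$ monomial as $a=a_1a_2$ with $a_1,a_2$ of strictly smaller length, and assuming the formula for both $a_1$ and $a_2$, I substitute the induction hypothesis into the right-hand side of the first-slot Leibniz rule and collect coefficients of $\{x_{\alpha},b\}$, obtaining the coefficient $a_1\psi_{\alpha}(a_2)+(-1)^{|a_1||a_2|}a_2\psi_{\alpha}(a_1)$. By the defining derivation property of the anti-differential this equals exactly $\psi_{\alpha}(a_1a_2)=\psi_{\alpha}(a)$, so that $\{a,b\}=\sum_{\alpha\in\Lambda}\psi_{\alpha}(a)\{x_{\alpha},b\}$, completing the induction. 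The sum is finite by the second observation in the preceding remark, since $\Lambda_{a}$ is a finite set.
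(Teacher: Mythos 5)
Your proof is correct and follows essentially the same route as the paper's: induction on the monomial length of $a$, using the Leibniz rule for the bracket in its first slot together with the defining derivation property of $\psi_{\alpha}$ to match coefficients of $\{x_{\alpha},b\}$. The only difference is that the paper runs the induction on both arguments simultaneously (expanding $\{aa',bb'\}$ in both slots), which is redundant since, as you observe, both sides are linear in $b$ and only the dependence on $a$ needs to be unwound.
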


\begin{proof}
It is clear that formula (2.2) is true on the generators
$\{x_{\alpha}\}_{\alpha\in \Lambda}$ of R. Thus in order to complete
the proof, we only need to show
\begin{center}
$\{aa', bb'\}=\sum_{\alpha\in
\Lambda}\psi_{\alpha}(aa')\{x_{\alpha}, bb'\}$
\end{center}
 provided that
\begin{align*}
\{a, b\}&=\sum_{\alpha\in \Lambda}\psi_{\alpha}(a)\{x_{\alpha}, b\}
\end{align*}
and
\begin{align*}
\{a', b'\}&=\sum_{\alpha\in \Lambda}\psi_{\alpha}(a')\{x_{\alpha}
b'\},
\end{align*}
for any homogeneous elements $a, a', b, b'\in R$.

Indeed, we have
\begin{equation}
\begin{split}
\{aa', bb'\}=&\{aa', b\}b'+(-1)^{|b|(|a|+|a'|)}b\{aa', b'\}\\
=&a\{a', b\}b'+(-1)^{|a||a'|}a'\{a, b\}b'+(-1)^{|b|(|a|+|a'|)}ba\{a', b'\}+(-1)^{|b|(|a|+|a'|)+|a||a'|}ba'\{a, b'\}\\
=&\sum_{\alpha\in \Lambda}a\psi_{\alpha}(a')\{x_{\alpha},
b\}b'+(-1)^{|a||a'|}\sum_{\alpha\in \Lambda}a'\psi_{\alpha}(a)\{x_{\alpha}, b\}b'\\
&+(-1)^{|b|(|a|+|a'|)}\sum_{\alpha\in
\Lambda}ba\psi_{\alpha}(a')\{x_{\alpha}, b'\}+(-1)^{|b|(|a|+|a'|)+|a||a'|}\sum_{\alpha\in \Lambda}ba'\psi_{\alpha}(a)\{x_{\alpha}, b'\}\\
=&\sum_{\alpha\in \Lambda}a\psi_{\alpha}(a')\{x_{\alpha},
b\}b'+(-1)^{|b||x_{\alpha}|}\sum_{\alpha\in \Lambda}a\psi_{\alpha}(a')b\{x_{\alpha}, b'\}\\
&+(-1)^{|a||a'|}\sum_{\alpha\in
\Lambda}a'\psi_{\alpha}(a)\{x_{\alpha},
b\}b'+(-1)^{|b||x_{\alpha}|+|a||a'|}\sum_{\alpha\in \Lambda}a'\psi_{\alpha}(a)b\{x_{\alpha}, b'\}\\
=&\sum_{\alpha\in
\Lambda}[(a\psi_{\alpha}(a')+(-1)^{|a||a'|}a'\psi_{\alpha}(a))(\{x_{\alpha},
b\}b'+(-1)^{|b||x_{\alpha}|}b\{x_{\alpha}, b'\})]\\
=&\sum_{\alpha\in \Lambda}\psi_{\alpha}(aa')\{x_{\alpha}, bb'\},
\end{split}\nonumber
\end{equation}
as required.
\end{proof}

\subsubsection{} Now we introduce another set of indeterminates $\{y_{\alpha}|~\alpha\in
 \Lambda\}$ such that $|x_{\alpha}|=|y_{\alpha}|$ for all $\alpha\in
 \Lambda$. Define a graded $R$-free algebra $F(R)$ by
\begin{center}
$F(R):=R\langle y_{\alpha}|~\alpha\in \Lambda \rangle$
\end{center}
and a $\Bbbk$-linear map
\begin{center}
$\psi: R\rightarrow F(R)$
\end{center}
by
\[\psi(f):=\sum_{\alpha\in \Lambda}\psi_{\alpha}(f)y_{\alpha}\]
for any $f\in R$. Note that such $\psi$ is well-defined since for
any $f\in R$, there are only finite many $\alpha\in \Lambda$ such
that $\psi_{\alpha}(f)$ is not zero. Also, let $j: R\rightarrow F(R)$
be the canonical inclusion map.

Now suppose that $(R, d, \{\cdot, \cdot\})$ is a DG Poisson algebra and $I$ is a
DG Poisson ideal of $R$. Put $A:=R/I$, then $A$ has a natural DG
Poisson algebra structure induced from $R$. Let $\pi'$ denote the canonical
projection $\pi': R\rightarrow A$, then $\pi'$ is a DG Poisson
algebra map.

As for $F(R)$, let $J$ be the graded ideal of $F(R)$ generated by
\begin{center}
\begin{enumerate}
  \item $~~~~~~~~~~~~I, ~\psi(I)$,
  \item $~~~~~~~~~~~~y_{\alpha}y_{\beta}-(-1)^{|x_{\alpha}||x_{\beta}|}y_{\beta}y_{\alpha}-\psi(\{x_{\alpha},
x_{\beta}\})$,
  \item $~~~~~~~~~~~~y_{\alpha}x_{\beta}-(-1)^{|x_{\alpha}||x_{\beta}|}x_{\beta}y_{\alpha}-\{x_{\alpha},
x_{\beta}\}$,
\end{enumerate}
\end{center}
where $|x_{\alpha}|=|y_{\alpha}|$ for any $\alpha\in \Lambda$, and
\[\psi(f):=\sum_{\alpha\in \Lambda}\psi_{\alpha}(f)y_{\alpha},\]
for any $f\in R$. This induces a canonical projection $\pi: F(R)\rightarrow \mathcal{A}:=F(R)/J$. For the sake of simplicity, we omit the canonical projections $\pi$ and $\pi'$ here, and also denote by $d$ the differential of $A$.

\begin{lemma}\label{lem1}
$\mathcal{A}$ is a DG algebra.
\end{lemma}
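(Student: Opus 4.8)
The plan is to lift the differential to the free $R$-ring $F(R)$ and then check that it descends to the quotient $\mathcal{A}=F(R)/J$. Concretely, I will define a $\Bbbk$-linear map $\partial\colon F(R)\to F(R)$ of degree $1$ by declaring $\partial|_{R}=d$, $\partial(y_{\alpha})=\psi(d(x_{\alpha}))=\sum_{\gamma\in\Lambda}\psi_{\gamma}(d(x_{\alpha}))y_{\gamma}$, and extending by the graded Leibniz rule $\partial(uv)=\partial(u)v+(-1)^{|u|}u\,\partial(v)$. This is well defined because $F(R)=R\langle y_{\alpha}\mid\alpha\in\Lambda\rangle$ is the free product of $R$ and $\Bbbk\langle y_{\alpha}\rangle$, and $d$ is already a derivation of $R$; thus prescribing $\partial$ on $R$ and on each $y_{\alpha}$ (in the correct degree, since $|\psi(d(x_{\alpha}))|=|x_{\alpha}|+1=|y_{\alpha}|+1$) determines a unique graded derivation of $F(R)$. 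What remains is then: (a) an auxiliary identity $\partial\psi=\psi d$ on $R$; (b) $\partial^{2}=0$; and (c) $\partial(J)\subseteq J$, so that $\partial$ passes to $\mathcal{A}$.

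First I would establish the key identity $\partial(\psi(f))=\psi(d(f))$ for all homogeneous $f\in R$, which is really Lemma \ref{lem2.10} in disguise. Expanding $\partial(\psi(f))=\sum_{\alpha}\big(d(\psi_{\alpha}(f))\,y_{\alpha}+(-1)^{|\psi_{\alpha}(f)|}\psi_{\alpha}(f)\,\psi(d(x_{\alpha}))\big)$ and comparing the coefficient of each $y_{\gamma}$ with $\psi(d(f))=\sum_{\gamma}\psi_{\gamma}(d(f))y_{\gamma}$, the equality reduces exactly to formula (2.1). Granting this, $\partial^{2}=0$ is quick: $\partial^{2}$ is itself a sign-free derivation of degree $2$, so it suffices to check it on the generating set $R\cup\{y_{\alpha}\}$; on $R$ we have $\partial^{2}=d^{2}=0$, while $\partial^{2}(y_{\alpha})=\partial(\psi(d(x_{\alpha})))=\psi(d(d(x_{\alpha})))=\psi(0)=0$ by the auxiliary identity and $d^{2}=0$.

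Next I would verify $\partial(J)\subseteq J$. Since $J$ is a two-sided ideal and $\partial$ is a derivation, it suffices to show $\partial$ sends each of the three families of generators of $J$ into $J$. For family (1): $\partial(I)=d(I)\subseteq I\subseteq J$ because $I$ is a DG Poisson ideal, and $\partial(\psi(I))=\psi(d(I))\subseteq\psi(I)\subseteq J$ by the auxiliary identity together with $d(I)\subseteq I$. For families (2) and (3) I would combine: the graded Leibniz rule for the bracket (axiom (iv)), $d(\{x_{\alpha},x_{\beta}\})=\{d(x_{\alpha}),x_{\beta}\}+(-1)^{|x_{\alpha}|}\{x_{\alpha},d(x_{\beta})\}$; the auxiliary identity $\partial\psi=\psi d$ to differentiate the $\psi(\{x_{\alpha},x_{\beta}\})$ summand; Lemma \ref{lem2.11} to re-express $\psi$ applied to a bracket; and a commutation relation modulo $J$, namely $y_{\alpha}f\equiv(-1)^{|x_{\alpha}||f|}f\,y_{\alpha}+\{x_{\alpha},f\}\pmod J$ for every homogeneous $f\in R$, which I would prove by induction on the length of $f$ from generator family (3) and the biderivation property (axiom (iii)).

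The main obstacle is the closure of the bracket generators (2) and (3) under $\partial$: after differentiating, one is left with expressions mixing $d(x_{\alpha})$, $d(x_{\beta})$, products $y_{\gamma}y_{\beta}$, and images $\psi(\{d(x_{\alpha}),x_{\beta}\})$, and showing that these recombine into $J$ demands careful tracking of all the Koszul signs and repeated rewriting modulo $J$ using the commutation relation and Lemma \ref{lem2.11}. The computation is forced to close precisely because the DG Poisson axioms of $R$ — the graded Leibniz rule for the bracket and $d^{2}=0$ — supply exactly the identities needed; nevertheless this sign bookkeeping is the delicate point, so I would isolate the commutation relation and the identity $\partial\psi=\psi d$ as separate claims beforehand, after which families (2) and (3) reduce to routine (if lengthy) substitution.
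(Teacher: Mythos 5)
Your core argument coincides with the paper's: $\partial$ is prescribed on the generators $x_{\alpha}$ and $y_{\alpha}$ (with $\partial y_{\alpha}=\psi(d(x_{\alpha}))$), extended by the graded Leibniz rule, and $\partial^{2}=0$ is reduced to the generators, where the case of $y_{\alpha}$ is exactly Lemma \ref{lem2.10}; your reformulation of that lemma as the single identity $\partial\psi=\psi d$ on $R$ is a clean repackaging of the paper's coefficient-by-coefficient computation of $\partial^{2}(y_{\alpha})$. Where you genuinely depart from the paper is in insisting that $\partial$ be defined first on the free algebra $F(R)$ and then shown to descend, i.e.\ that $\partial(J)\subseteq J$. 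The paper's proof simply writes down $\partial$ on $\mathcal{A}=F(R)/J$ by its values on generators and asserts that only $\partial^{2}=0$ remains to be checked, never addressing well-definedness on the quotient; so your extra step is not redundant caution but a repair of an actual omission. Your plan for it is sound: the generators of type (1) are handled by $d(I)\subseteq I$ together with $\partial\psi=\psi d$; type (3) reduces, after applying the graded Leibniz rules for $d$ and $\{\cdot,\cdot\}$ and regrouping, to the congruences of Lemmas \ref{lem2} and \ref{lem3} applied to $f=d(x_{\beta})$ and $f=d(x_{\alpha})$; type (2) likewise reduces to the congruence $y_{i}\psi(f)\equiv(-1)^{|x_{i}||f|}\psi(f)y_{i}+\psi(\{x_{i},f\})$ mod $J$, which is the content of Lemmas \ref{lem4} and \ref{lem+} (equivalently part (c) of Lemma \ref{lem7}). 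Those statements appear later in the paper but rest only on the graded algebra structure of $F(R)/J$, not on its differential, so no circularity arises. The one place your write-up remains a sketch is precisely this type (2) computation; you correctly flag it as the delicate point, and the ingredients you list are the right ones.
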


\begin{proof}
Note that $|\psi|=0$, it is easy to see that $\mathcal{A}$ is a graded
algebra from its construction. Define a $\Bbbk$-linear map
$\partial: \mathcal{A}\rightarrow \mathcal{A}$ by
\[\partial x_{\alpha}:=j(d(x_{\alpha}))=d(x_{\alpha}), ~~~\partial y_{\alpha}:=\psi(d(x_{\alpha}))=\sum_{\beta\in \Lambda}\psi_{\beta}d(x_{\alpha})y_{\beta}\]
for all $\alpha\in \Lambda$, and the graded Leibniz rule,
i.e.,
\begin{center}
$\partial(ab)=\partial(a)b+(-1)^{|a|}a\partial(b)$
\end{center}
for all homogeneous elements $a, b\in \mathcal{A}$. It is obvious that
$|\partial|=1$. In order to prove $\mathcal{A}$ is a DG algebra, it
suffices to prove that $\partial^2=0$.

Indeed, it is easy to see that $\partial(f)=d(f)$ for all
$f\in A$ by using induction. We have
\begin{center}
$\partial^2(x_{\alpha})=\partial(d(x_{\alpha}))=d^2(x_{\alpha})=0$,
\end{center}
for all $\alpha\in \Lambda$. Moreover, we have
\begin{equation}
\begin{split}
\partial^2(y_{\alpha})=&\partial(\sum_{\beta\in
\Lambda}\psi_{\beta}d(x_{\alpha})y_{\beta})\\=&\sum_{\beta\in
\Lambda}[\partial\psi_{\beta}d(x_{\alpha})y_{\beta}+(-1)^{|\psi_{\beta}d(x_{\alpha})|}
\psi_{\beta}d(x_{\alpha})\partial(y_{\beta})]\\
=&\sum_{\beta\in \Lambda}[d\psi_{\beta}d(x_{\alpha})y_{\beta}+(-1)^{|\psi_{\beta}d(x_{\alpha})|}
\psi_{\beta}d(x_{\alpha})\sum_{\gamma\in \Lambda}\psi_{\gamma}d(x_{\beta})y_{\gamma}]\\
=&\sum_{\beta\in \Lambda}\psi_{\beta}d^2(x_{\alpha})y_{\beta}-\sum_{\beta, \gamma\in \Lambda}(-1)^{|\psi_{\gamma}d(x_{\alpha})|}\psi_{\gamma}d(x_{\alpha})\psi_{\beta}d(x_{\gamma})y_{\beta}
+\sum_{\beta, \gamma\in \Lambda}(-1)^{|\psi_{\beta}d(x_{\alpha})|}\psi_{\beta}d(x_{\alpha})\psi_{\gamma}d(x_{\beta})y_{\gamma}\\
=&-\sum_{\beta, \gamma\in \Lambda}(-1)^{|\psi_{\gamma}d(x_{\alpha})|}\psi_{\gamma}d(x_{\alpha})\psi_{\beta}d(x_{\gamma})y_{\beta}
+\sum_{\beta, \gamma\in \Lambda}(-1)^{|\psi_{\beta}d(x_{\alpha})|}\psi_{\beta}d(x_{\alpha})\psi_{\gamma}d(x_{\beta})y_{\gamma}\\
=&~~0
\end{split}\nonumber
\end{equation}
by Lemma \ref{lem2.10}.
\end{proof}

Note that there are two $\Bbbk$-linear maps $m, h: A\rightarrow
\mathcal{A}$ given by
\begin{center}
$m(f)=f, ~h(f)=\psi(f)$,
\end{center}
for all $f\in A$. We can obtain our main result in this subsection.

\begin{theorem}
$(\mathcal{A}, m, h)$ is the universal enveloping algebra of a DG Poisson
algebra $A$, where $A=R/I$ is defined as above.
\end{theorem}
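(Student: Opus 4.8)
The plan is to check directly that the triple $(\mathcal{A}, m, h)$ meets the four requirements in the definition of the universal enveloping algebra: $m$ is a DG algebra map, $h$ is a DG Lie algebra map into $\mathcal{A}_P$, the two structural identities
\[
m(\{a,b\}) = h(a)m(b) - (-1)^{|a||b|}m(b)h(a), \qquad h(ab) = m(a)h(b) + (-1)^{|a||b|}m(b)h(a)
\]
hold for all homogeneous $a,b\in A$, and the universal property is satisfied. Throughout I would identify an element of $A=R/I$ with a representative in $R$; this is legitimate precisely because $I\subseteq J$ and $\psi(I)\subseteq J$ by relation (1), so both $m$ and $h$ descend to well-defined maps on $A$.

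First I would settle the structural identities. That $m$ is a DG algebra map is immediate from the construction together with Lemma \ref{lem1}, which gives $\partial(f)=d(f)$ for $f\in A$. The second identity is just the graded derivation property of $\psi$: expanding $h(ab)=\psi(ab)=\sum_\gamma\psi_\gamma(ab)y_\gamma$ and inserting $\psi_\gamma(ab)=a\psi_\gamma(b)+(-1)^{|a||b|}b\psi_\gamma(a)$ produces $m(a)h(b)+(-1)^{|a||b|}m(b)h(a)$ with no induction at all. For the first identity I would establish the auxiliary fact that $y_\alpha b-(-1)^{|x_\alpha||b|}b y_\alpha=\{x_\alpha,b\}$ in $\mathcal{A}$ for every $b\in R$: this is relation (3) on generators and it propagates to products by the graded Leibniz rule for the commutator combined with the biderivation axiom (iii) for the Poisson bracket. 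Summing this against the coefficients $\psi_\alpha(a)$, which graded-commute with $b$, and invoking Lemma \ref{lem2.11} then yields $[h(a),m(b)]=\sum_\alpha\psi_\alpha(a)\{x_\alpha,b\}=\{a,b\}=m(\{a,b\})$.

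Next I would show $h$ is a DG Lie algebra map. Compatibility with differentials, $\partial h=h d$, holds on each $y_\alpha=h(x_\alpha)$ by the very definition $\partial y_\alpha=\psi(d(x_\alpha))=h(d(x_\alpha))$, and it extends to products by an induction using the second structural identity and $\partial m=m d$. The bracket-preservation $h(\{a,b\})=h(a)h(b)-(-1)^{|a||b|}h(b)h(a)$ holds on generators by relation (2), and I would extend it to all of $A$ by induction on word length in each argument, feeding in the two structural identities together with the graded Jacobi identity. The idea is routine but the sign book-keeping here is the genuinely laborious part, and this is the step I expect to cost real effort.

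For the universal property, given $(D,\delta)$ with $f$ and $g$ as in the definition, I would put $\tilde f=f\pi'$ and $\tilde g=g\pi'$ on $R$ and define $\phi$ on $F(R)$ to be the algebra map with $\phi(x_\alpha)=f(x_\alpha)$ and $\phi(y_\alpha)=g(x_\alpha)$. The crux is showing $\phi(J)=0$, for which I would first prove the key identity $\phi(\psi(r))=\tilde g(r)$ for all $r\in R$: it holds on generators and propagates via the relation $g(ab)=f(a)g(b)+(-1)^{|a||b|}f(b)g(a)$ and the derivation property of the $\psi_\alpha$. Granting this, relation (1) dies because $\tilde f$ and $\tilde g$ annihilate $I$; relation (2) maps to $[g(x_\alpha),g(x_\beta)]-\tilde g(\{x_\alpha,x_\beta\})$, which vanishes since $g$ is a Lie map; and relation (3) vanishes by the first identity imposed on $(f,g)$. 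Hence $\phi$ descends to $\mathcal{A}$, and it is a DG map once $\phi\partial=\delta\phi$ is verified on the generators $x_\alpha,y_\alpha$ (using that $\tilde f,\tilde g$ are DG and the key identity). Finally $\phi m=f$ and $\phi h=g$ follow at once from the construction and the key identity, and uniqueness holds because $x_\alpha$ and $y_\alpha=h(x_\alpha)$ generate $\mathcal{A}$ as an algebra, so any DG algebra map bi-commuting with $(f,g)$ must coincide with $\phi$ on generators.
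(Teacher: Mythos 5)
Your proposal is correct and follows essentially the same route as the paper: verify the two structural identities (the product one falling out of the derivation property of $\psi$, the bracket ones by induction on monomial length), then establish the universal property by lifting $(f,g)$ to a map on $F(R)$ via $x_\alpha\mapsto f(x_\alpha)$, $y_\alpha\mapsto g(x_\alpha)$, proving $\phi\circ\psi=g\circ\pi'$, and checking that the generators of $J$ are killed. The only divergence is cosmetic: for $m(\{a,b\})=h(a)m(b)-(-1)^{|a||b|}m(b)h(a)$ you first propagate relation (3) to $y_\alpha b-(-1)^{|x_\alpha||b|}by_\alpha=\{x_\alpha,b\}$ and then invoke Lemma \ref{lem2.11}, whereas the paper runs a direct induction on the first argument using the biderivation property; both are sound and your variant is essentially the content of Lemma \ref{lem2} from Section 3.
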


\begin{proof}
By Lemma \ref{lem1}, $(\mathcal{A}, \partial)$ is a DG algebra. From the
construction of $\mathcal{A}$, there are two $\Bbbk$-linear maps
$m: A\rightarrow \mathcal{A}$ sending each element $f\in A$ to
$f$ and $h: A\rightarrow \mathcal{A}$ sending each element
$f\in A$ to $\psi(f)$. In fact, $m$ is the DG algebra map
and $h$ is DG $\Bbbk$-linear map since for all homogeneous $f, g\in
A$, we have
\begin{align*}
m(fg)&=fg=m(f)m(g), \\
\partial m(f)&=\partial(f)=d(f)=md(f)
\end{align*}
and
\begin{align*}
\partial h(f)&=\partial(\sum_{\alpha\in
\Lambda}\psi_{\alpha}(f)y_{\alpha})\\
&=\sum_{\alpha\in \Lambda}[\partial\psi_{\alpha}(f)y_{\alpha}+(-1)^{|\psi_{\alpha}(f)|}\psi_{\alpha}(f)\partial(y_{\alpha})]\\
&=\sum_{\alpha\in \Lambda}[d\psi_{\alpha}(f)y_{\alpha}+(-1)^{|\psi_{\alpha}(f)|}\psi_{\alpha}(f)({\sum_{\beta\in \Lambda}\psi_{\beta}d(x_{\alpha})y_{\beta}})]\\
&=\sum_{\alpha\in \Lambda}[d\psi_{\alpha}(f)y_{\alpha}]+\sum_{\beta\in \Lambda}[\psi_{\beta}d(f)-d\psi_{\beta}(f)]y_{\beta}\\
&=hd(f)
\end{align*}
by Lemma \ref{lem2.10}.

Notice that for any homogeneous elements $f, g\in A$, we have
\begin{equation}
\begin{split}
h(fg)&=\psi(fg)=\sum_{\alpha\in
\Lambda}\psi_{\alpha}(fg)y_{\alpha}=\sum_{\alpha\in
\Lambda}f\psi_{\alpha}(g)y_{\alpha}+\sum_{\alpha\in
\Lambda}(-1)^{|f||g|}g\psi_{\alpha}(f)y_{\alpha}\\
&=f\psi(g)+(-1)^{|f||g|}g\psi(f)=m(f)h(g)+(-1)^{|f||g|}m(g)h(f).
\end{split}\nonumber
\end{equation}
For a monomial $f\in A$, we proceed the proof using induction on the
length of $f$. We already know that
\begin{equation}
\begin{split}
h(x_{\alpha})m(x_{\beta})-(-1)^{|x_{\alpha}||x_{\beta}|}m(x_{\beta})h(x_{\alpha})&=
\sum_{\gamma\in
\Lambda}\psi_{\gamma}(x_{\alpha})y_{\gamma}x_{\beta}-(-1)^{|x_{\alpha}||x_{\beta}|}x_{\beta}
\sum_{\gamma\in
\Lambda}\psi_{\gamma}(x_{\alpha})y_{\gamma}\\
&=y_{\alpha}x_{\beta}-(-1)^{|x_{\alpha}||x_{\beta}|}x_{\beta}y_{\alpha}\\
&=m(\{x_{\alpha},x_{\beta}\}).
\end{split}\nonumber
\end{equation}
For any monomials $f, g, a\in A$, assume that
\begin{center}
$m(\{f, a\})=h(f)m(a)-(-1)^{|f||a|}m(a)h(f)$
\end{center}
and
\begin{center}
$m(\{g, a\})=h(g)m(a)-(-1)^{|g||a|}m(a)h(g)$.
\end{center}
Now we have
\begin{align*}
&m(\{fg, a\})\\=&m(f\{g, a\}+(-1)^{|f||g|}g\{f, a\})\\
=&m(f)[h(g)m(a)-(-1)^{|g||a|}m(a)h(g)]+(-1)^{|f||g|}m(g)[h(f)m(a)-(-1)^{|f||a|}m(a)h(f)]\\
=&[m(f)h(g)+(-1)^{|f||g|}m(g)h(f)]m(a)-(-1)^{|f||a|+|g||a|}m(a)[m(f)h(g)+(-1)^{|f||g|}m(g)h(f)]\\
=&h(fg)m(a)-(-1)^{|fg||a|}m(a)h(fg).
\end{align*}
Applying the induction, we have
\begin{center}
$m(\{f, g\})=h(f)m(g)-(-1)^{|f||g|}m(g)h(f)$,
\end{center}
for all $f, g\in A$. Similarly, we have $h(\{x_{\alpha},
x_{\beta}\})=[h(x_{\alpha}), h(x_{\beta})]$. For any
monomials $f, g, a\in A$, assume that
\begin{center}
$h(\{a, f\})=[h(a), h(f)]$, \quad $h(\{a, g\})=[h(a), h(g)]$.
\end{center}
Then we have
\begin{align*}
&h(\{a, fg\})\\=&h(\{a, f\}g+(-1)^{|a||f|}f\{a, g\})\\
=&m(\{a, f\})h(g)+(-1)^{|\{a, f\}||g|}m(g)h(\{a,
f\})+(-1)^{|a||f|}[m(f)h(\{a, g\})+(-1)^{|f||\{a, g\}|}m(\{a,
g\})h(f)]\\
=&[h(a)m(f)-(-1)^{|a||f|}m(f)h(a)]h(g)+(-1)^{(|a|+|f|)|g|}m(g)[h(a)h(f)-(-1)^{|a||f|}h(f)h(a)]\\
&+(-1)^{|a||f|}m(f)[h(a)h(g)-(-1)^{|a||g|}h(g)h(a)]+(-1)^{|f||g|}[h(a)m(g)-(-1)^{|a||g|}m(g)h(a)]h(f)\\
=&h(a)m(f)h(g)-(-1)^{|a||f|+|f||g|+|g||a|}m(g)h(f)h(a)\\
&-(-1)^{|a||f|+|a||g|}m(f)h(g)h(a)+(-1)^{|f||g|}h(a)m(g)h(f)\\
=&h(a)h(fg)-(-1)^{|a||f|+|a||g|}h(fg)h(a)\\
=&[h(a), h(fg)].
\end{align*}
Applying the induction, we have
\begin{center}
$h(\{f, g\})=[h(f), h(g)]$,
\end{center}
for all $f, g\in A$.

Let $(D, \partial')$ be a DG algebra. Let $\gamma: (A, d)\rightarrow
(D, \partial')$ be a DG algebra map and let $\delta: (A, \{\cdot, \cdot\},
d)\rightarrow (D_P, [\cdot, \cdot], \partial')$ be a DG Lie algebra map such
that
\begin{align*}
\gamma(\{a, b\})&=\delta(a)\gamma(b)-(-1)^{|a||b|}\gamma(b)\delta(a),\\
\delta(ab)&=\gamma(a)\delta(b)+(-1)^{|a||b|}\gamma(b)\delta(a),
\end{align*}
for any homogeneous elements $a, b\in A$. Since $F(R)$ is a graded
free algebra, there exists a graded algebra map $\phi': F(R) \rightarrow D$
defined by
\begin{center}
$\phi'(x_{\alpha})=\gamma(x_{\alpha})$,
$~~\phi'(y_{\alpha})=\delta(x_{\alpha})$,
\end{center}
for all $\alpha\in \Lambda$.

It is shown to be $\phi'\psi=\delta$ using induction on the
length of monomials in $R$. Therefore we have
\begin{align*}
&\phi'(I)=\gamma(I)=0, \quad \quad \phi'(\psi(I))=\delta(I)=0,\\
&\phi'(y_{\alpha}y_{\beta}-(-1)^{|x_{\alpha}||x_{\beta}|}y_{\beta}y_{\alpha}-\psi(\{x_{\alpha},
x_{\alpha}\}))\\
=&\delta(x_{\alpha})\delta(x_{\beta})-(-1)^{|x_{\alpha}||x_{\beta}|}\delta(x_{\beta})\delta(x_{\alpha})-\delta(\{x_{\alpha},x_{\beta}\})\\
=&[\delta(x_{\alpha}),\delta(x_{\beta})]-\delta(\{x_{\alpha}, x_{\beta}\})=0,\\
&\phi'(y_{\alpha}x_{\beta}-(-1)^{|x_{\alpha}||x_{\beta}|}x_{\beta}y_{\alpha}-\{x_{\alpha},
x_{\beta}\})\\
=&\delta(x_{\alpha})\gamma(x_{\beta})-(-1)^{|x_{\alpha}||x_{\beta}|}\gamma(x_{\beta})\delta(x_{\alpha})-\gamma(\{x_{\alpha},x_{\beta}\})\\
=&\gamma(\{x_{\alpha}, x_{\beta}\})-\gamma(\{x_{\alpha},
x_{\beta}\})=0.
\end{align*}
Thus there exists a graded algebra map $\phi$ from $\mathcal{A}$ into $D$
such that $\phi m=\gamma, \phi h=\delta$.

Further, A graded algebra map $\phi: \mathcal{A}\rightarrow D$ is unique
such that $\phi m=\gamma$, $\phi h=\delta$ by its construction and
it is also a DG algebra map since for any $\alpha\in \Lambda$, we
have
\begin{center}
$\phi\partial(x_{\alpha})=\phi d(x_{\alpha})=\phi
m(d(x_{\alpha}))=\gamma d(x_{\alpha})
=\partial'\gamma(x_{\alpha})=\partial'\phi(x_{\alpha})$
\end{center}
and
\begin{center}
$\phi\partial(y_{\alpha})=\phi\psi(d(x_{\alpha}))=\phi
h(d(x_{\alpha}))=\delta d(x_{\alpha})
=\partial'\delta(x_{\alpha})=\partial'\phi(y_{\alpha})$.
\end{center}
Therefore, $(\mathcal{A}, m, h)$ is the universal enveloping algebra of
a DG Poisson algebra $A$, as required.
\end{proof}

\begin{remark}
From now on, we let $A^e:=\mathcal{A}$ denote the universal enveloping
algebra of a DG Poisson algebra $A$ given by generators and
relations.
\end{remark}

\medskip
\section{Poincar\'{e}-Birkhoff-Witt theorem for universal
enveloping algebras} In this section, we develop an algorithm to
find a $\Bbbk$-linear basis of the universal enveloping algebra
$A^e$.

\subsection{Gr\"{o}bner-Shirshov basis theory}
In this subsection, we recall the Gr\"{o}bner-Shirshov basis theory
developed in \cite{KL} and \cite{KLe}.

 Let $X=\{x_1, x_2, \cdots\}$ be a set of alphabets indexed by positive
integers. Define a linear ordering on $X$ $\prec$ by setting
$x_i\prec x_j$ if and only if $i<j$. Let $X^*$ be the free monoid of
associative monomials on $X$. We denote the empty monomial by 1 and
the length of a monomial $u$ by $l(u)$ with $l(1)=0$. We consider
two linear ordering $<$ and $\ll$ on $X^*$ defined as follows
\cite{KLe}:
\begin{enumerate}
  \item[(\rmnum{1})] $1<u$ for any nonempty monomial $u$; and
  inductively, $u<v$ whenever $u=x_iu', v=x_jv'$ and $x_i\prec x_j$ or
  $x_i=x_j$ and $u'<v'$ with $x_i, x_j\in X$,
  \item[(\rmnum{2})] $u\ll v$ if $l(u)<l(v)$ or $l(u)=l(v)$ and $u<v$.
\end{enumerate}
The ordering $<$ (resp. $\ll$) is called the lexicographic ordering
(resp. degree-lexicographic ordering). It is easy to see $\ll$ is a
monomial order on $X^*$, that is, $x\ll y$ implies $axb\ll ayb$ for
all $a, b\in X^*$.

Let $T_X$ be the DG free $k$-algebra generated by $X$, let $I$ be a
DG ideal of $T_X$ and let $T_0=T_X/I$, then $T_0$ is a DG algebra.
The image of $p\in T_X$ in $T_0$ under the canonical DG quotient map
will also be denoted by $p$. Given a nonzero element $p\in T_0$, we
denote by $\bar{p}$ the maximal monomial appearing in $p$ under the
ordering $\ll$. Thus $p=\alpha\bar{p}+\Sigma \beta_iw_i$ with
$\alpha, \beta_i\in \Bbbk$, $w_i\in X^*$, $\alpha\neq 0$ and $w_i\ll
\bar{p}$. The $\alpha$ is called the leading coefficient of $p$ and if
$\alpha=1$, then $p$ is said to be monic. Recall that the
composition of $p$ and $q$ as follows.
\begin{defn} \cite{KL}
Let $p$ and $q$ be monic elements of $T_0$.
\begin{enumerate}
  \item[(a)]If there exist $a$ and $b$ in $X^*$ such that
$\bar{p}a=b\bar{q}=w$ with $l(\bar{p})>l(b)$, then the composition
of intersection is defined to be $(p, q)_w=pa-bq$.
  \item[(b)]If there exist $a$ and $b$ in $X^*$ such that $a\neq 1$,
$a\bar{p}b=\bar{q}=w$, then the composition of inclusion is defined
to be $(p, q)_w=apb-q$.
\end{enumerate}
\end{defn}
Let $S$ be a subset of monic elements of $T_0$ and let $J$ be the DG
ideal of $T_0$ generated by $S$. Then we say the DG algebra $T_0/J$
is defined by $S$. The image of $p\in T_0$ in $T_0/J$ under the
canonical quotient maps will also be denoted by $p$ as long as there
is no peril of confusion. Next, let $p, q\in T_0$ and $w\in X^*$. We
define a congruence relation on $T_0$ as follows: $p\equiv q$ mod
$(J; w)$ if and only if $p-q=\Sigma\alpha_ia_is_ib_i$, where
$\alpha_i\in \Bbbk$, $a_i, b_i\in X^*$, $s_i\in S$ and $a_i\bar{s}_ib_i\ll w$. A set $S$ of monic elements of $T_0$ is said
to be closed under the composition if for any $p, q\in S$ and $w\in
X^*$ such that $(p, q)_w$ is defined, we have $(p, q)_w\equiv 0$ mod
$(J; w)$.

Now we introduce another definition of monomials and then prove the
following generalization of Shirshov's Composition Lemma to the
representations of DG associative algebras.
\begin{defn} \cite{KL}
 A monomial $u\in X^*$ is said to be $S$-standard in $T_0$ if $u\neq
a\bar{s}b$ for any $s\in S$ and $a, b\in X^*$. Otherwise, the
monomial $u$ is said to be $S$-reducible in $T_0$.
\end{defn}

\begin{theorem}
Let $S$ be a subset of monic elements in $T_0$ and let $T_0/J$ be
the DG algebra is defined by $S$. If $S$ is closed under composition
in $T_0$ and the image of $p\in T_0$ is zero in $T_0/J$, then the
monomial $\bar{p}$ is $S$-reducible in $T_0$.
\end{theorem}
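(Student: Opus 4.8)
The plan is to run the classical Composition--Diamond (Shirshov) argument, invoking the hypothesis that $S$ is closed under composition only through the congruence relation $\bmod\,(J; w)$ introduced above. The differential plays no essential role in the combinatorial reduction: every element in sight is homogeneous, the monomial order $\ll$ respects the grading, and the rewritings below stay inside the $S$-generated ideal, so I will suppress $d$ throughout. First I would unwind the assumption that the image of $p$ is zero in $T_0/J$, i.e. that $p$ lies in the ideal generated by $S$, and write $p=\sum_{i=1}^{n}\alpha_i a_i s_i b_i$ with $\alpha_i\in\Bbbk$, $a_i,b_i\in X^*$ and $s_i\in S$ monic.

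Setting $w_i:=a_i\bar{s}_ib_i$ and $w:=\max_{\ll}\{w_1,\dots,w_n\}$, I would pass to a representation of $p$ of this shape that is minimal for the pair $(w,N)$, where $N:=\#\{i: w_i=w\}$, ordered lexicographically (first minimize $w$ under $\ll$, then minimize the integer $N$). Such a minimal representation exists because $\ll$ is a well-order on $X^*$, so the achievable values of $w$ have a $\ll$-least element, and among representations realizing it the achievable values of $N$ have a least element. I would then compare $\bar p$ with $w$: since each $a_is_ib_i$ has largest monomial exactly $w_i\preceq w$ and each $s_i$ is monic, the coefficient of $w$ in $p$ equals $\sum_{i:\,w_i=w}\alpha_i$. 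If this sum is nonzero, then $\bar p=w=a_i\bar{s}_ib_i$ for a suitable index, so $\bar p$ is manifestly $S$-reducible and we are done. Hence the entire difficulty is to exclude the alternative $\bar p\ll w$, which forces the top terms to cancel and therefore $N\geq 2$.

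The heart of the proof, and the main obstacle, is to derive a contradiction from $N\geq 2$ by exhibiting a representation of $p$ with strictly smaller $(w,N)$. I would choose two indices $i_1,i_2$ with $w_{i_1}=w_{i_2}=w$ and examine how the two occurrences of the leading monomials $\bar{s}_{i_1}$ and $\bar{s}_{i_2}$ sit inside $w$: they are either disjoint, or properly overlapping, or one is nested in the other. In the disjoint case a telescoping identity (replacing one leading monomial by its full element $s$ and absorbing the resulting error term, whose leading monomial is $\ll w$) merges the two contributions and lowers $N$. In the overlapping and nested cases the composition $(s_{i_1},s_{i_2})_w$ of intersection, respectively of inclusion, is defined; closedness under composition yields $(s_{i_1},s_{i_2})_w\equiv 0 \bmod (J;w)$, i.e. it rewrites as a combination of terms $c\,s\,e$ with $c\bar{s}e\ll w$. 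Substituting this back replaces the two top contributions by terms strictly below $w$, so $(w,N)$ again strictly decreases. In every case minimality is contradicted, so $\bar p\ll w$ is impossible, $\bar p=w$, and $\bar p$ is $S$-reducible.

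The delicate points I expect to spend the most care on are verifying that each rewriting genuinely preserves $p$ as an element of $T_0$ and strictly decreases $(w,N)$ in the lexicographic order, and matching the disjoint/overlapping/nested trichotomy precisely to the telescoping step and the two composition types $(p,q)_w$ defined above, so that the hypothesis ``$S$ is closed under composition'' is applied in exactly the cases where a composition is actually defined.
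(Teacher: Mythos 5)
Your proposal is correct and follows essentially the same route as the paper's proof: both run the classical Composition--Diamond argument, comparing $\bar p$ with the maximal monomial $w$ among the $a_i\bar s_i b_i$ and, when $\bar p\ll w$, using the disjoint/overlapping/nested trichotomy together with closedness under composition to rewrite $p$ with all terms strictly below $w$. The only difference is packaging — you phrase the descent as a minimal representation with respect to the lexicographic pair $(w,N)$, whereas the paper iterates the rewriting and appeals to well-ordering for termination; your version makes the termination and the cancellation of top coefficients (hence $N\geq 2$) slightly more explicit.
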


\begin{proof}
Since the image of $p\in T_0$ is zero in $T_0/J$, we have
$p=\Sigma\alpha_ia_is_ib_i$, where $\alpha_i\in \Bbbk$, $a_i, b_i\in
X^*$ and $s_i\in S$. Choose the maximal monomial $w$ in the
degree-lexicographic ordering $\ll$ among the monomials
$\{a_i\overline{s_i}b_i\}$ in the expression of $p$. If $\bar{p}=w$,
then we are done. Suppose this is not the case, then $\bar{p}\ll w$
and without loss of generality, we may assume that the following
case hold: $w=a_1\overline{s_1}b_1=a_2\overline{s_2}b_2$.

If $w=a_1\overline{s_1}b_1=a_2\overline{s_2}b_2$, then we should
show that $a_1s_1b_1\equiv a_2s_2b_2$ mod $(J; w)$. There are three
possibilities:

(i) If the monomials $\overline{s_1}$ and $\overline{s_2}$ have
empty intersection in $w$, then we may assume that
$a_1s_1b_1=as_1b\overline{s_2}c$ and
$a_2s_2b_2=a\overline{s_1}bs_2c$, where $a, b, c\in X^*$. Thus
\begin{center}
$a_2s_2b_2-a_1s_1b_1=-a(s_1-\overline{s_1})bs_2c+as_1b(s_2-\overline{s_2})c$,
\end{center}
which implies $a_1s_1b_1\equiv a_2s_2b_2$ mod $(J; w)$.

(ii) If $\overline{s_1}=u_1u_2$ and $\overline{s_2}=u_2u_3$ for some
$u_2\neq 1$, then $a_2=a_1u_1$, $b_1=u_3b_2$ and
\begin{align*}
a_2s_2b_2-a_1s_1b_1&=a_1u_1s_2b_2-a_1s_1u_3b_2\\&=-a_1(s_1u_3-u_1s_2)b_2=a_1(s_1,
s_2)_{u_1u_2u_3}b_2.
\end{align*}
Since $\overline{(s_1, s_2)_{u_1u_2u_3}}\ll u_1u_2u_3$ and $S$ is
closed under composition in $T_0$, we obtain $a_1s_1b_1\equiv
a_2s_2b_2$ mod $(J; w)$.

(iii) If $\overline{s_1}=u_1\overline{s_2}u_2$, then $a_2=a_1u_1$,
$b_2=u_2b_1$ and
\begin{align*}
a_2s_2b_2-a_1s_1b_1&=a_1u_1s_2u_2b_1-a_1s_1b_1\\&=a_1(u_1s_2u_2-s_1)b_2=a_1(s_2,
s_1)_{\overline{s_1}}b_1.
\end{align*}
Since $\overline{(s_2, s_1)_{\overline{s_1}}}\ll \overline{s_1}$ and
$S$ is closed under composition in $T_0$, we get $a_1s_1b_1\equiv
a_2s_2b_2$ mod $(J; w)$.

Therefore, $p$ can be written as $p=\Sigma\alpha'_ia'_is'_ib'_i$,
where $a'_i\overline{s'_i}b'_i\ll w$ for all $i$. Choose the maximal
monomial $w_1$ in the ordering $\ll$ among
$\{a'_i\overline{s'_i}b'_i\}$. If $\bar{p}=w_1$, then we are done.
If this is not the case, repeat the above process. Since $X$ is
indexed by the set of positive integers, this process must terminate
in finite steps, which completes the proof.
\end{proof}

As a corollary, we obtain:

\begin{prop}\label{th1}
Let $\mathcal{A}\subseteq X^*$ form a $\Bbbk$-linear basis of DG
algebra $T_0=T_X/I$, let $S$ be a subset of monic elements of $T_0$
and let $T_0/J$ be the DG algebra is defined by $S$. Then the
following are equivalent:
\begin{enumerate}
  \item[(\rmnum{1})] $S$ is closed under composition in $T_0$.
  \item[(\rmnum{2})] the subset of $\mathcal{A}$ consisting of $S$-standard
monomials in $T_0$ forms a $\Bbbk$-linear basis of the DG algebra
$T_0/J$.
\end{enumerate}
\end{prop}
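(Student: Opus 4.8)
The plan is to establish the two implications separately, invoking the preceding theorem (Shirshov's Composition Lemma for DG algebras) for the forward direction and a direct reduction argument for the converse. Throughout I would use that $\ll$ is a well-ordering on $X^*$, which holds because $X$ is indexed by the positive integers, so that any strictly descending chain of leading monomials must terminate.

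First I would prove (\rmnum{1})$\Rightarrow$(\rmnum{2}). Assume $S$ is closed under composition. For spanning, lift any element of $T_0/J$ to some $p\in T_0$ and expand it in the basis $\mathcal{A}$. If its leading monomial $\bar p$ is $S$-reducible, say $\bar p=a\bar s b$ with $s\in S$ and $a,b\in X^*$, then subtracting a scalar multiple of $asb$ changes $p$ only by an element of $J$ and strictly lowers $\bar p$ under $\ll$; iterating and re-expanding in $\mathcal{A}$ at each stage terminates by well-ordering and leaves a $\Bbbk$-combination of $S$-standard monomials of $\mathcal{A}$. Hence the $S$-standard monomials span $T_0/J$. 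For independence, suppose a nonzero combination $p$ of distinct $S$-standard monomials of $\mathcal{A}$ has zero image in $T_0/J$. Then $\bar p$ is the $\ll$-largest monomial occurring with nonzero coefficient, hence $S$-standard; but the theorem forces $\bar p$ to be $S$-reducible, a contradiction. So all coefficients vanish and (\rmnum{2}) follows.

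Next I would prove (\rmnum{2})$\Rightarrow$(\rmnum{1}). Let $p,q\in S$ and $w\in X^*$ be such that a composition $(p,q)_w$ is defined. By construction the leading terms cancel, so $\overline{(p,q)_w}\ll w$, while $(p,q)_w\in J$. Reducing $(p,q)_w$ modulo $S$ as above, I would write $(p,q)_w=\sum_i\alpha_i a_is_ib_i+g$, where every reduction step satisfies $a_i\bar{s_i}b_i\ll w$ and $g$ is a $\Bbbk$-combination of $S$-standard monomials of $\mathcal{A}$. Since both $(p,q)_w$ and $\sum_i\alpha_i a_is_ib_i$ lie in $J$, the image of $g$ in $T_0/J$ is zero; by (\rmnum{2}) the $S$-standard monomials are linearly independent, so $g=0$. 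This yields $(p,q)_w=\sum_i\alpha_i a_is_ib_i$ with $a_i\bar{s_i}b_i\ll w$, i.e. $(p,q)_w\equiv 0$ mod $(J;w)$, so $S$ is closed under composition.

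The step I expect to be the main obstacle is the order bookkeeping in the converse direction: one must check that every intermediate rewriting $a_is_ib_i$ has leading monomial strictly below $w$, not merely below the current leading term, and that re-expressing in the fixed basis $\mathcal{A}$ the monomials of $X^*$ produced by a reduction never raises the leading monomial. Both rely on $\ll$ being a monomial well-ordering, which simultaneously guarantees termination of the reductions; once these estimates are secured, the linear-independence half of (\rmnum{2}) closes the argument.
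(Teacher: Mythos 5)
Your argument is correct and is essentially the standard Gr\"obner--Shirshov reduction argument that the paper itself omits, deferring instead to Proposition 1.9 of \cite{KL}: you use the preceding DG Composition Lemma for the linear-independence half of (\rmnum{1})$\Rightarrow$(\rmnum{2}), reduction to $S$-standard monomials for spanning, and the independence of $S$-standard monomials to force the remainder of a reduced composition to vanish in (\rmnum{2})$\Rightarrow$(\rmnum{1}). The compatibility issue you flag at the end---that re-expanding monomials of $X^*$ in the fixed basis $\mathcal{A}$ of $T_0=T_X/I$ must not raise leading monomials---is a genuine hypothesis on $\mathcal{A}$, but it is equally suppressed by the paper and by the cited source, so your treatment is no less complete than theirs.
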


\begin{proof}
Copy the proof of proposition 1.9 in \cite{KL}.
\end{proof}

\subsection{PBW-basis for the universal enveloping algebra}
Let $A$ be a DG Poisson homomorphic image of a DG Poisson algebra
$R$ with an arbitrary differential $d$ and Poisson bracket $\{\cdot, \cdot\}$,
where

$$R=\frac{T(V)}{\langle x_{\alpha}\otimes
x_{\beta}-(-1)^{|x_{\alpha}||x_{\beta}|}x_{\beta}\otimes x_{\alpha}|
~\forall \alpha, ~\beta \in\Lambda \rangle}$$
and $\Lambda$ is a finite index set. For the convenience of the
narrative, assume that $\Lambda=\{1, 2, \cdots, n\}$. In this subsection, we
will find a $\Bbbk$-linear basis of $R^{e}$ and develop an algorithm to
find a $\Bbbk$-linear basis of the universal enveloping algebra
$A^{e}$. From now on, assume that,
\newline

 $\bullet ~~(R, \{\cdot, \cdot\}, d)$ is a DG Poisson algebra, where $R$ is
 defined as above and $\Lambda$ is a finite index set. Assume that
 $\Lambda=\{1, 2, \cdots, n\}$.

 $\bullet ~~A=R/I$, where $I$ is a DG Poisson ideal of $R$.

 $\bullet ~~F(R)=R\langle y_i | ~i\in \Lambda \rangle$.

 $\bullet ~~\psi: R\rightarrow F(R), ~~\psi(f)=\sum_{r=1}^n\psi_r(f)y_r$.

 $\bullet ~~J$ is the graded ideal of $F(R)$ generated
\begin{center}
 $I$, $~\psi(I)$, $~y_ix_j-(-1)^{|x_i||x_j|}x_jy_i-\{x_i, x_j\}$, $~y_iy_j-(-1)^{|x_i||x_j|}y_jy_i-\psi(\{x_i, x_j\})$
\end{center}
for all $i, j=1, 2, \cdots, n$.

 $\bullet ~~A^e=F(R)/J$ is the universal enveloping algebra of $A$ and let $\pi: F(R)\rightarrow A^e$ denote the
 canonical projection.

Now define a linear ordering on the index set $\Lambda\times
\Lambda$ by
 \begin{center}
 $(l, m)<(r, s)\Longleftrightarrow m<s ~~or ~~m=s ~~and ~~l<r$
\end{center}
and a grading on $F(R)$ by
 \begin{center}
$deg(x_i)=(1, 0)$, ~~$deg(y_i)=(0, 1)$
\end{center}
for all $i\in \Lambda$, hence the grading of a monomial
$u=u_1\cdots u_l\in F(R)$, where $u_j=x_i$ or $u_j=y_i$, is defined by
 \begin{center}
 $deg(u)=deg(u_1)+\cdots+deg(u_l)\in \Lambda\times \Lambda$.
 \end{center}
We give an ordering $<$ on the set of generators of $F(R)$ by
 \begin{center}
 $x_1<x_2<\cdots<x_n<y_1<y_2<\cdots<y_n$.
 \end{center}
Therefore there is a well-ordering $\prec$ on the set of all
monomials in $F(R)$. That is, for monomials $u=u_1\cdots u_l$ and
$v=v_1\cdots v_m$, we denote $u\prec v$ if one of the following
conditions holds:
\begin{enumerate}
  \item[(\rmnum{1})] $deg(u)<deg(v)$.
  \item[(\rmnum{2})] $deg(u)=deg(v), u_1=v_1, \cdots, u_r=v_r ~and ~u_{r+1}<v_{r+1}$ for some $r\in
  \Lambda$.
\end{enumerate}
Note that the ordering $\prec$ is a monomial order.

\begin{exa}
For $u=x_2x_3y_3^2y_1$, $v=x_3y_2x_1^2$ and $w=x_2y_3x_3y_2y_1$, we
have $deg(u)=deg(w)=(2, 3)$ and $deg(v)=(3, 1)$, hence $v\prec
u\prec w$.
\end{exa}

\begin{lemma}\label{lem2}
For a monomial $f=x_{i_1}\cdots x_{i_r}\in F(R)$, $y_if\equiv
(-1)^{|x_i||f|}fy_i+\{x_i, f\} ~mod (J; y_if)$.
\end{lemma}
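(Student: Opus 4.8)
The plan is to induct on the length $r$ of the monomial $f=x_{i_1}\cdots x_{i_r}$. For $r=1$ the element
\[
s_1:=y_ix_{i_1}-(-1)^{|x_i||x_{i_1}|}x_{i_1}y_i-\{x_i,x_{i_1}\}
\]
is by construction one of the generators of $J$; since the ordering $\prec$ compares monomials first by $y$-degree and places every $x$-letter before every $y$-letter, its leading monomial is $\overline{s_1}=y_ix_{i_1}=y_if$, so the base case is immediate from the congruence relation modulo $(J;y_if)$.

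For the inductive step I would write $f=x_{i_1}f'$ with $f'=x_{i_2}\cdots x_{i_r}$ of length $r-1$ and use $s_1\in J$ to rewrite, inside the free algebra $F(R)$,
\[
y_if=(-1)^{|x_i||x_{i_1}|}x_{i_1}\bigl(y_if'\bigr)+\{x_i,x_{i_1}\}f'+s_1f'.
\]
I would then apply the induction hypothesis to $y_if'$ and left-multiply the resulting congruence by $x_{i_1}$; this is legitimate because $\prec$ is a monomial order, so left multiplication by $x_{i_1}$ preserves the bound on the leading monomials of the $J$-combination supplied by the hypothesis. Collecting the two purely-$R$ contributions then produces the terms $\{x_i,x_{i_1}\}f'+(-1)^{|x_i||x_{i_1}|}x_{i_1}\{x_i,f'\}$.

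The key step is to recognize that these two terms recombine, via the biderivation property (graded Leibniz rule for the bracket) of the Poisson algebra $R$, into the single bracket
\[
\{x_i,f\}=\{x_i,x_{i_1}f'\}=\{x_i,x_{i_1}\}f'+(-1)^{|x_i||x_{i_1}|}x_{i_1}\{x_i,f'\},
\]
and that the sign on the transported $fy_i$ term is correct, since $(-1)^{|x_i||x_{i_1}|}(-1)^{|x_i||f'|}=(-1)^{|x_i|(|x_{i_1}|+|f'|)}=(-1)^{|x_i||f|}$ because $|f|=|x_{i_1}|+|f'|$. After this recombination the difference $y_if-(-1)^{|x_i||f|}fy_i-\{x_i,f\}$ is displayed as a $\Bbbk$-combination of elements $a\,s\,b$ with $s=s_1$ or $s$ a generator coming from the induction hypothesis.

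Finally I would carry out the leading-monomial bookkeeping against $w=y_if$. Using the bidegree $\deg(x_i)=(1,0)$, $\deg(y_i)=(0,1)$, every monomial of $\{x_i,f\}$ has $y$-degree $0$ and so lies strictly below $w$, while $fy_i$ begins with an $x$-letter and hence satisfies $fy_i\prec y_if$; the terms $x_{i_1}\cdot(\text{induction combination})$ have leading monomials $\preceq x_{i_1}(y_if')\prec y_if$, and the remaining term $s_1f'$ has leading monomial $\overline{s_1}f'=y_if=w$. Thus every $a\overline{s}b$ that occurs is $\preceq y_if$, which is exactly what the congruence modulo $(J;y_if)$ records. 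I expect the main obstacle to be bookkeeping rather than conceptual: tracking the Koszul signs through the two-step rewrite and confirming that $s_1f'$ is the only contribution whose leading monomial attains $w$, everything else being strictly smaller.
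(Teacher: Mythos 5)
Your proof is correct and follows essentially the same route as the paper's: induction on the length of $f$, using the defining generator $y_ix_j-(-1)^{|x_i||x_j|}x_jy_i-\{x_i,x_j\}$ of $J$ together with the biderivation property to reassemble $\{x_i,x_{i_1}\}f'+(-1)^{|x_i||x_{i_1}|}x_{i_1}\{x_i,f'\}$ into $\{x_i,f\}$, and the fact that $\prec$ is a monomial order to control the leading monomials. The only (immaterial) difference is that you peel off the first letter $x_{i_1}$ and apply the generator relation before the induction hypothesis, whereas the paper writes $f=(x_{i_1}\cdots x_{i_{r-1}})x_{i_r}$ and applies the induction hypothesis first.
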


\begin{proof}
We proceed the proof using induction on $l(f)=r$. If $r=$0 or 1, then
it is trivial. Assume that the statement is true for monomials with
length less than $r$. Then we have that
\begin{equation}
\begin{split}
y_if&=y_i(x_{i_1}\cdots x_{i_{r-1}})x_{i_r}\\
&\equiv((-1)^{|x_i||x_{i_1}\cdots x_{i_{r-1}}|}x_{i_1}\cdots x_{i_{r-1}}y_i+\{x_i, x_{i_1}\cdots x_{i_{r-1}}\})x_{i_r}\\
&\equiv(-1)^{|x_i||x_{i_1}\cdots x_{i_{r-1}}|}x_{i_1}\cdots x_{i_{r-1}}((-1)^{|x_i||x_{i_r}|}x_{i_r}y_i+\{x_i,
x_{i_r}\})+\{x_i, x_{i_1}\cdots x_{i_{r-1}}\}x_{i_r}\\
&\equiv(-1)^{|x_i||f|}fy_i+\{x_i, f\} ~mod (J; y_if)
\end{split}\nonumber
\end{equation}
by the induction hypothesis and biderivation property.
\end{proof}

\begin{lemma}\label{lem3}
For a monomial $f=x_{i_1}\cdots x_{i_r}\in F(R)$,
\begin{center}
$\psi(f)x_i\equiv (-1)^{|x_i||f|}x_i\psi(f)+\{f, x_i\}~mod (J;
\overline{\psi(f)}x_i)$.
 \end{center}
\end{lemma}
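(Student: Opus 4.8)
The plan is to mimic the strategy of Lemma \ref{lem2}, but now pushing the generators $y_\alpha$ (rather than a single $y_i$) to the right of $x_i$, and then to recognize the two resulting pieces as the two terms on the right-hand side. First I would expand the left-hand side through the definition of $\psi$, writing $\psi(f)x_i=\sum_{\alpha\in\Lambda}\psi_\alpha(f)\,y_\alpha x_i$, where each coefficient $\psi_\alpha(f)$ lies in $R$. Applying the type-$(3)$ defining relation of $J$, namely $y_\alpha x_i\equiv(-1)^{|x_\alpha||x_i|}x_iy_\alpha+\{x_\alpha,x_i\}$, to every summand gives
\[
\psi(f)x_i\equiv\sum_{\alpha\in\Lambda}(-1)^{|x_\alpha||x_i|}\psi_\alpha(f)\,x_iy_\alpha+\sum_{\alpha\in\Lambda}\psi_\alpha(f)\{x_\alpha,x_i\}
\]
modulo $(J;\overline{\psi(f)}x_i)$, the difference being $\sum_\alpha\psi_\alpha(f)$ times the relation itself, which lies in $J$.

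For the first sum I would move $x_i$ to the front using only the graded commutativity of $R$ (this is an honest identity in $F(R)$, since $R$ is a graded-commutative subalgebra, so it does not affect the congruence class). Recalling from the earlier remark that $|\psi_\alpha|=-|x_\alpha|$, so that $|\psi_\alpha(f)|=|f|-|x_\alpha|$, the sign bookkeeping collapses uniformly in $\alpha$:
\[
(-1)^{|x_\alpha||x_i|}\psi_\alpha(f)\,x_i=(-1)^{|x_\alpha||x_i|+(|f|-|x_\alpha|)|x_i|}x_i\,\psi_\alpha(f)=(-1)^{|x_i||f|}x_i\,\psi_\alpha(f).
\]
Summing over $\alpha$ and factoring out $x_i$ yields $(-1)^{|x_i||f|}x_i\psi(f)$, the first term on the right. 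For the second sum, Lemma \ref{lem2.11} applied with $a=f$ and $b=x_i$ gives exactly $\sum_{\alpha}\psi_\alpha(f)\{x_\alpha,x_i\}=\{f,x_i\}$, the second term. Combining the two identifications completes the computation.

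The one point that genuinely needs care --- and which I expect to be the main (though routine) obstacle --- is the degree bookkeeping ensuring that every rewriting above really takes place modulo $(J;\overline{\psi(f)}x_i)$ and not some larger degree. Concretely, when the type-$(3)$ relation is applied to a term $m\,y_\alpha x_i$, where $m$ is a monomial occurring in $\psi_\alpha(f)$, the leading monomial of the substituted relation is $m\,y_\alpha x_i$; since $m\,y_\alpha$ is a monomial appearing in $\psi(f)=\sum_\beta\psi_\beta(f)y_\beta$, one has $m\,y_\alpha\preceq\overline{\psi(f)}$, and because $\prec$ is a monomial order, right multiplication by $x_i$ preserves this, giving $m\,y_\alpha x_i\preceq\overline{\psi(f)}x_i$. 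Thus all substitutions stay within the prescribed bound (with equality attained only at the leading term, exactly as in Lemma \ref{lem2}), while the subsequent reorderings are equalities inside $R\subseteq F(R)$ and contribute nothing new to the congruence. Hence the bound $\overline{\psi(f)}x_i$ is respected throughout, and the claimed congruence follows.
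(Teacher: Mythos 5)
Your proposal is correct and follows essentially the same route as the paper's own proof: expand $\psi(f)x_i=\sum_s\psi_s(f)y_sx_i$, rewrite each $y_sx_i$ via the defining relation $z_{si}$ of $J$, commute $x_i$ past $\psi_s(f)$ using graded commutativity and $|\psi_s(f)|=|f|-|x_s|$ to collapse the signs to $(-1)^{|f||x_i|}$, and identify $\sum_s\psi_s(f)\{x_s,x_i\}=\{f,x_i\}$ by Lemma \ref{lem2.11}. The only difference is that you spell out the monomial-order bookkeeping for the bound $\overline{\psi(f)}x_i$, which the paper leaves implicit.
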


\begin{proof}
By Lemma \ref{lem2.11} and the definition of $\psi$, we have $\{f, x_i\}=\sum_{s}\psi_s(f)\{x_s, x_i\}$ and $\psi(f)=\sum_{s}\psi_s(f)y_s$. Thus
\begin{equation}
\begin{split}
\psi(f)x_i&=\sum_{s}\psi_{s}(f)y_{s}x_i\\&
\equiv\sum_{s}\psi_{s}(f)((-1)^{|x_{s}||x_i|}x_iy_{s}+\{x_{s},
x_i\})\\
&\equiv\sum_{s}(-1)^{|x_{s}||x_i|}(-1)^{(|f|-|x_{s}|)|x_i|}x_i\psi_{s}(f)y_{s}+\sum_{s}\psi_{s}(f)\{x_{s},
x_i\}\\
&\equiv(-1)^{|f||x_i|}x_i\psi(f)+\sum_{s}\psi_{s}(f)\{x_{s},
x_i\}\\
&\equiv(-1)^{|f||x_i|}x_i\psi(f)+\{f, x_i\} ~mod (J;
\overline{\psi(f)}x_i)
\end{split}\nonumber
\end{equation}
by Lemma \ref{lem2}.
\end{proof}

\begin{lemma}\label{lem4}
For a monomial $f=x_{i_1}\cdots x_{i_r}\in F(R)$,
\begin{align*}
y_i\psi(f)\equiv& (-1)^{|x_i||f|}\psi(f)y_i+\sum_{s,
t}(-1)^{|x_i||\psi_{s}(f)|}\psi_{s}(f)\psi_{t}(f_{is})y_{t}\\
&+\sum_{s,
t}(-1)^{|x_i||\psi_{t}\psi_{s}(f)|}\psi_{t}\psi_{s}(f)f_{it}y_{s} ~mod (J; y_i\overline{\psi(f)}),
\end{align*}
where $f_{ij}=\{x_i, x_j\}$.
\end{lemma}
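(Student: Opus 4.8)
The plan is to expand $y_i\psi(f)=\sum_s y_i\psi_s(f)y_s$ and to push the left factor $y_i$ to the right, first through $\psi_s(f)\in R$ and then through $y_s$, collecting the lower-order terms produced at each commutation. Although Lemma~\ref{lem2} is stated for a monomial, the congruence $y_ig\equiv(-1)^{|x_i||g|}gy_i+\{x_i,g\}$ is $\Bbbk$-linear in $g$, so it applies to the polynomial $g=\psi_s(f)$. This gives
\begin{center}
$y_i\psi_s(f)y_s\equiv(-1)^{|x_i||\psi_s(f)|}\psi_s(f)y_iy_s+\{x_i,\psi_s(f)\}y_s,$
\end{center}
and I would split the remaining computation into a ``leading'' part $\sum_s(-1)^{|x_i||\psi_s(f)|}\psi_s(f)y_iy_s$ and a ``bracket'' part $\sum_s\{x_i,\psi_s(f)\}y_s$, treating each separately.

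For the leading part I would invoke the defining relation $y_iy_s\equiv(-1)^{|x_i||x_s|}y_sy_i+\psi(f_{is})$ of $J$ together with $\psi(f_{is})=\sum_t\psi_t(f_{is})y_t$. The essential point is the degree identity $|\psi_s(f)|+|x_s|=|f|$, which follows from $|\psi_s|=-|x_s|$; it yields $(-1)^{|x_i||\psi_s(f)|}(-1)^{|x_i||x_s|}=(-1)^{|x_i||f|}$. Summing over $s$ recombines $\sum_s(-1)^{|x_i||f|}\psi_s(f)y_sy_i=(-1)^{|x_i||f|}\psi(f)y_i$, the first target term, and leaves exactly $\sum_{s,t}(-1)^{|x_i||\psi_s(f)|}\psi_s(f)\psi_t(f_{is})y_t$, the second target term.

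The bracket part is where the real work lies, and I expect it to be the main obstacle. I would first establish, for any $g\in R$, the auxiliary identity
\begin{center}
$\{x_i,g\}=\sum_t(-1)^{|x_i||\psi_t(g)|}\psi_t(g)f_{it}.$
\end{center}
This follows by combining graded antisymmetry $\{x_i,g\}=-(-1)^{|x_i||g|}\{g,x_i\}$, then Lemma~\ref{lem2.11} to rewrite $\{g,x_i\}=\sum_t\psi_t(g)f_{ti}$, then the antisymmetry $f_{ti}=-(-1)^{|x_t||x_i|}f_{it}$, and finally the parity relation $|g|+|x_t|\equiv|\psi_t(g)|\pmod 2$ (again from $|\psi_t|=-|x_t|$) to collapse the accumulated sign into $(-1)^{|x_i||\psi_t(g)|}$. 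Specializing to $g=\psi_s(f)$, so that $\psi_t(g)=\psi_t\psi_s(f)$, turns $\sum_s\{x_i,\psi_s(f)\}y_s$ into the third target term $\sum_{s,t}(-1)^{|x_i||\psi_t\psi_s(f)|}\psi_t\psi_s(f)f_{it}y_s$.

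Finally I would verify the order bookkeeping. Every commutation above applies either a defining relation of $J$ or the already-proved congruences of Lemmas~\ref{lem2} and~\ref{lem2.11}, and in each case the monomials reduced are strictly below $y_i\overline{\psi(f)}$ in the monomial order $\prec$: the single $y_i$ on the left is the only source of the maximal leading monomial, and each step either lowers the $y$-content (the bracket terms land in $R$) or moves $y_i$ rightward past a smaller generator. Hence all three partial congruences hold modulo $(J;y_i\overline{\psi(f)})$, and adding them gives the stated formula.
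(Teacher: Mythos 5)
Your proof is correct and follows essentially the same route as the paper: expand $\psi(f)=\sum_s\psi_s(f)y_s$, commute $y_i$ past $\psi_s(f)$ via Lemma~\ref{lem2}, apply the defining relation for $y_iy_s$, recombine the signs using $|\psi_s(f)|+|x_s|=|f|$, and rewrite the bracket term with the identity $\{x_i,g\}=\sum_t(-1)^{|x_i||\psi_t(g)|}\psi_t(g)f_{it}$ (which the paper states without derivation). Your explicit justification of applying Lemma~\ref{lem2} to the homogeneous polynomial $\psi_s(f)$ and of the order bookkeeping only makes explicit what the paper leaves implicit.
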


\begin{proof}
Note that $\psi(f)=\sum_{s}\psi_{s}(f)y_{s}$ and $\{x_i,
f\}=\sum_{s}(-1)^{|x_i||\psi_{s}(f)|}\psi_{s}(f)f_{is}$, we have
\begin{align*}
y_i\psi(f)=&y_i\sum_{s}\psi_{s}(f)y_s\\
\equiv&\sum_{s}((-1)^{|\psi_{s}(f)||x_i|}\psi_{s}(f)y_i+\{x_i,
\psi_{s}(f)\})y_{s}\\
\equiv&\sum_{s}(-1)^{|\psi_{s}(f)||x_i|}\psi_{s}(f)((-1)^{|x_i||x_{s}|}y_{s}y_i+\psi(f_{is}))+\sum_{s}\{x_i, \psi_{s}(f)\}y_{s}\\
\equiv&(-1)^{|x_i||f|}\psi(f)y_i+\sum_{s,
t}(-1)^{|x_i||\psi_{s}(f)|}\psi_{s}(f)\psi_{t}(f_{is})y_{t}\\
&+\sum_{s,
t}(-1)^{|x_i||\psi_{t}\psi_{s}(f)|}\psi_{t}\psi_{s}(f)f_{it}y_{s} ~mod (J; y_i\overline{\psi(f)})
\end{align*}
by Lemma \ref{lem2}.
\end{proof}

\begin{lemma}\label{lem5}
Retain the above notions, we have that
\begin{equation}
\begin{split}
&\sum_{s,
t}(-1)^{|x_i||\psi_{s}(f_{jk})|}[\psi_{s}(f_{jk})\psi_{t}(f_{is})+(-1)^{|\psi_{s}(f_{jk})||f_{is}|}f_{is}\psi_{t}\psi_{s}(f_{jk})]y_t\\
&+\sum_{s,
t}(-1)^{|x_k||f_{ij}|+|x_k||\psi_{s}(f_{ij})|}[\psi_{s}(f_{ij})\psi_{t}(f_{ks})+(-1)^{|\psi_{s}(f_{ij})||f_{ks}|}f_{ks}\psi_{t}\psi_{s}(f_{ij})]y_t\\
&+\sum_{s,
t}(-1)^{|x_i||f_{jk}|+|x_j||\psi_{s}(f_{ki})|}[\psi_{s}(f_{ki})\psi_{t}(f_{js})+(-1)^{|\psi_{s}(f_{ki})||f_{js}|}f_{js}\psi_{t}\psi_{s}(f_{ki})]y_t
~\equiv 0 ~mod J,
\end{split}\nonumber
\end{equation}
where $f_{ij}=\{x_i, x_j\}$.
\end{lemma}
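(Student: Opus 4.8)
The plan is to show that the entire left-hand side of the asserted identity is nothing but $\psi$ applied to the cyclic form of the graded Jacobi identity evaluated on $x_i$, $x_j$, $x_k$, and therefore vanishes already in $F(R)$, hence a fortiori modulo $J$. I would proceed in three steps: first collapse each of the three inner binomials into a single value of an anti-differential on a product; then resum to recognize each group as $\psi$ of a double Poisson bracket; and finally invoke the graded Jacobi identity. Throughout, $f_{ij}=\{x_i,x_j\}$ as in the statement.

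First I would use the biderivation property $\psi_t(uv)=u\psi_t(v)+(-1)^{|u||v|}v\psi_t(u)$ of the anti-differential. Applied with $u=\psi_s(f_{jk})$ and $v=f_{is}$ it gives
\[
\psi_s(f_{jk})\psi_t(f_{is})+(-1)^{|\psi_s(f_{jk})||f_{is}|}f_{is}\psi_t\psi_s(f_{jk})=\psi_t\big(\psi_s(f_{jk})f_{is}\big),
\]
and similarly for the other two groups. Summing over $t$ and using $\psi(g)=\sum_t\psi_t(g)y_t$ together with the $\Bbbk$-linearity of $\psi$ then turns the first group into $\psi\big(\sum_s(-1)^{|x_i||\psi_s(f_{jk})|}\psi_s(f_{jk})f_{is}\big)$; the second and third groups are handled identically after factoring out $(-1)^{|x_k||f_{ij}|}$ and $(-1)^{|x_i||f_{jk}|}$ respectively.

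Next I would identify the remaining $R$-valued sums with double brackets. By Lemma \ref{lem2.11} one has $\{f_{jk},x_i\}=\sum_s\psi_s(f_{jk})\{x_s,x_i\}$, and combining this with graded antisymmetry and the degree shift $|\psi_s(g)|=|g|-|x_s|$ (from the Remark, $|\psi_\alpha|=-|x_\alpha|$) yields
\[
\sum_s(-1)^{|x_i||\psi_s(f_{jk})|}\psi_s(f_{jk})f_{is}=\{x_i,f_{jk}\}=\{x_i,\{x_j,x_k\}\},
\]
and cyclically the same for the other two sums. Consequently the whole left-hand side equals
\[
\psi\Big(\{x_i,\{x_j,x_k\}\}+(-1)^{|x_k|(|x_i|+|x_j|)}\{x_k,\{x_i,x_j\}\}+(-1)^{|x_i|(|x_j|+|x_k|)}\{x_j,\{x_k,x_i\}\}\Big).
\]

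Finally, rewriting the graded Jacobi identity by means of graded antisymmetry puts it precisely in the cyclic form
\[
\{x_i,\{x_j,x_k\}\}+(-1)^{|x_i|(|x_j|+|x_k|)}\{x_j,\{x_k,x_i\}\}+(-1)^{(|x_i|+|x_j|)|x_k|}\{x_k,\{x_i,x_j\}\}=0,
\]
so that the argument of $\psi$ vanishes in $R$; since $\psi$ is $\Bbbk$-linear the whole expression is already $0$ in $F(R)$, and in particular $\equiv 0$ modulo $J$. I expect the only genuine difficulty to be the sign bookkeeping: one must track carefully the Koszul signs produced by the degree shift $|\psi_s(g)|=|g|-|x_s|$ and verify that the signs surviving the resummation match exactly those appearing in the cyclic Jacobi identity above. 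Everything else is a routine, if lengthy, manipulation resting on Lemmas \ref{lem2.11}, \ref{lem2}, \ref{lem3} and \ref{lem4}.
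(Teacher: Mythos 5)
Your proposal is correct and is essentially the paper's own argument run in the opposite direction: the paper starts from the cyclic form of the graded Jacobi identity, rewrites each term via Lemma \ref{lem2.11} as $\{x_i,f_{jk}\}=\sum_s(-1)^{|x_i||\psi_s(f_{jk})|}\psi_s(f_{jk})f_{is}$, and then applies $\psi$ and expands $\psi_t$ by its derivation property to arrive at the displayed expression, whereas you collapse the expression back to $\psi$ of the Jacobi identity. The sign bookkeeping you flag does work out (using $|\psi_s(g)|=|g|-|x_s|$ and graded antisymmetry), and your observation that the expression already vanishes in $F(R)$, not merely modulo $J$, is consistent with the paper since Lemma \ref{lem2.11} is an honest equality in $R$.
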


\begin{proof}
Since $\{f, x_i\}=\sum_{s}\psi_{s}(f)\{x_s, x_i\}$, we have
\begin{equation}
\begin{split}
0\equiv&\{x_i, f_{jk}\}+(-1)^{|x_k||f_{ij}|}\{x_k, f_{ij}\}+(-1)^{|x_i||f_{jk}|}\{x_j, f_{ki}\}\\
\equiv&\sum_{s}(-1)^{|x_i||\psi_{s}(f_{jk})|}\psi_{s}(f_{jk})f_{is}+(-1)^{|x_k||f_{ij}|}
\sum_{s}(-1)^{|x_k||\psi_{s}(f_{ij})|}\psi_{s}(f_{ij})f_{ks}\\
&+(-1)^{|x_i||f_{jk}|}\sum_{s}(-1)^{|x_j||\psi_{s}(f_{ki})|}\psi_{s}(f_{ki})f_{js}.
\end{split}\nonumber
\end{equation}
Hence
\begin{equation}
\begin{split}
0\equiv&\psi[\sum_{s}(-1)^{|x_i||\psi_{s}(f_{jk})|}\psi_{s}(f_{jk})f_{is}+(-1)^{|x_k||f_{ij}|}
\sum_{s}(-1)^{|x_k||\psi_{s}(f_{ij})|}\psi_{s}(f_{ij})f_{ks}]\\
&+\psi[(-1)^{|x_i||f_{jk}|}\sum_{s}(-1)^{|x_j||\psi_{s}(f_{ki})|}\psi_{s}(f_{ki})f_{js}]\\
=&\sum_{s}(-1)^{|x_i||\psi_{s}(f_{jk})|}\sum_{t}\psi_{t}(\psi_{s}(f_{jk})f_{is})y_t+(-1)^{|x_k||f_{ij}|}
\sum_{s}(-1)^{|x_k||\psi_{s}(f_{ij})|}\sum_{t}\psi_{t}(\psi_{s}(f_{ij})f_{ks})y_t\\
&+(-1)^{|x_i||f_{jk}|}\sum_{s}(-1)^{|x_j||\psi_{s}(f_{ki})|}\sum_{t}\psi_{t}(\psi_{s}(f_{ki})f_{js})y_t\\
=&\sum_{s,
t}(-1)^{|x_i||\psi_{s}(f_{jk})|}[\psi_{s}(f_{jk})\psi_{t}(f_{is})+(-1)^{|\psi_{s}(f_{jk})||f_{is}|}f_{is}\psi_{t}\psi_{s}(f_{jk})]y_t\\
&+\sum_{s,
t}(-1)^{|x_k||f_{ij}|+|x_k||\psi_{s}(f_{ij})|}[\psi_{s}(f_{ij})\psi_{t}(f_{ks})+(-1)^{|\psi_{s}(f_{ij})||f_{ks}|}f_{ks}\psi_{t}\psi_{s}(f_{ij})]y_t\\
&+\sum_{s,
t}(-1)^{|x_i||f_{jk}|+|x_j||\psi_{s}(f_{ki})|}[\psi_{s}(f_{ki})\psi_{t}(f_{js})+(-1)^{|\psi_{s}(f_{ki})||f_{js}|}f_{js}\psi_{t}\psi_{s}(f_{ki})]y_t ~mod J.\\
\end{split}\nonumber
\end{equation}
\end{proof}

\begin{lemma}\label{lem+}
For a monomial $f=x_{i_1}\cdots x_{i_r}\in F(R)$,
\[\sum_{s,
t}(-1)^{|x_s||x_t|}\psi_{s}\psi_{t}(f)f_{it}y_s=\sum_{s,
t}\psi_{t}\psi_{s}(f)f_{it}y_s,\]
where $f_{ij}=\{x_i, x_j\}$.
\end{lemma}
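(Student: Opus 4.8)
The statement asserts an equality between two sums that differ only by a swap of the two outer indices $s,t$ together with the sign $(-1)^{|x_s||x_t|}$. The plan is to prove it by a direct reindexing argument, the crux of which is a graded-symmetry property of the double anti-differential $\psi_s\psi_t(f)$. First I would write the left-hand side, relabel the dummy summation variables by interchanging $s\leftrightarrow t$, and observe that it becomes
\[
\sum_{s,t}(-1)^{|x_s||x_t|}\psi_t\psi_s(f)\,f_{is}\,y_t.
\]
Thus the lemma will follow once I establish that the coefficients match term by term, i.e. once I control how $\psi_s\psi_t(f)$ behaves under exchanging $s$ and $t$.

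The key step, then, is to prove the commutation relation
\[
\psi_s\psi_t(f)=(-1)^{|x_s||x_t|}\psi_t\psi_s(f)
\]
for any homogeneous $f\in R$. I would prove this by induction on the length of the monomial $f$, exactly as in Lemmas \ref{lem2.10} and \ref{lem2.11}. The base case is the generators $x_\gamma$: there $\psi_t(x_\gamma)=\delta_{t\gamma}\in\Bbbk$ is a scalar, so $\psi_s\psi_t(x_\gamma)=0=\psi_t\psi_s(x_\gamma)$ and both sides vanish. For the inductive step I would expand $\psi_s\psi_t(ab)$ using the defining Leibniz-type rule $\psi_\alpha(ab)=a\psi_\alpha(b)+(-1)^{|a||b|}b\psi_\alpha(a)$ twice, carefully tracking the degree shifts $|\psi_\alpha|=-|x_\alpha|$ recorded in the Remark, and likewise expand $\psi_t\psi_s(ab)$; matching the four resulting terms and invoking the induction hypothesis on $a$ and $b$ should yield the sign $(-1)^{|x_s||x_t|}$.

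Once this graded symmetry is in hand, I would substitute it into the relabeled sum: the factor $(-1)^{|x_s||x_t|}\psi_t\psi_s(f)$ collapses to $\psi_s\psi_t(f)$, and the whole expression becomes $\sum_{s,t}\psi_s\psi_t(f)\,f_{is}\,y_t$, which upon a final harmless renaming $s\leftrightarrow t$ is exactly the right-hand side $\sum_{s,t}\psi_t\psi_s(f)f_{it}y_s$. The main obstacle I anticipate is purely bookkeeping: getting every Koszul sign right in the double Leibniz expansion, since the degree of $\psi_\alpha(a)$ is $|a|-|x_\alpha|$ and the two anti-differentials carry opposite-sign degree shifts that interact with the graded commutativity of $R$. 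I do not expect any conceptual difficulty beyond this sign discipline, and the structure of the argument should mirror the preceding lemmas closely enough that it can be carried out routinely.
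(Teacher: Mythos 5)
Your proposal is correct, and its computational core is the same as the paper's: an induction on the length of the monomial $f$ driven by a double application of the rule $\psi_{\alpha}(ab)=a\psi_{\alpha}(b)+(-1)^{|a||b|}b\psi_{\alpha}(a)$, with the signs reconciled via graded commutativity of $R$ and $|\psi_{\alpha}(a)|=|a|-|x_{\alpha}|$. The organizational difference is worth noting. The paper inducts directly on the summed identity, carrying the factors $f_{it}y_s$ through the whole expansion and matching the resulting terms at the end; you instead isolate the stronger pointwise statement $\psi_{s}\psi_{t}(f)=(-1)^{|x_s||x_t|}\psi_{t}\psi_{s}(f)$, prove that by induction, and obtain the lemma as an immediate corollary. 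Your version is slightly cleaner and more general (it yields the identity against arbitrary coefficients, not just $f_{it}y_s$), and it makes transparent why the lemma is true: the anti-differentials graded-commute. One small remark: once you have the pointwise symmetry, the identity already holds term by term, since multiplying $\psi_{s}\psi_{t}(f)=(-1)^{|x_s||x_t|}\psi_{t}\psi_{s}(f)$ by $(-1)^{|x_s||x_t|}$ gives exactly the equality of the $(s,t)$-summands on the two sides; the two dummy-index relabelings in your write-up cancel each other and can be dropped. The sign verification you flag as the main obstacle does go through: in the inductive step the two ``pure'' terms $a\psi_{s}\psi_{t}(b)$ and $b\psi_{s}\psi_{t}(a)$ match by the induction hypothesis, and the two cross terms $\psi_{t}(b)\psi_{s}(a)$ and $\psi_{t}(a)\psi_{s}(b)$ match their counterparts after commuting the factors in $R$, with the Koszul signs combining to exactly $(-1)^{|x_s||x_t|}$.
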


\begin{proof}
We proceed the proof using induction on $l(f)=r$. If $r=1$ or $2$, then
it is trivial. Assume that the statement is true for monomials with
length less than $r$. Set $f_{ij}=\{x_i, x_j\}$, then we have that
\begin{align*}
&\sum_{s,
t}(-1)^{|x_s||x_t|}\psi_{s}\psi_{t}(x_{i_1}\cdots x_{i_{r-1}}\cdot x_{i_r})f_{it}y_s\\
=&\sum_{s,
t}(-1)^{|x_s||x_t|}\psi_{s}[x_{i_1}\cdots x_{i_{r-1}}\psi_{t}(x_{i_r})+(-1)^{|x_{i_r}||x_{i_1}\cdots x_{i_{r-1}}|}x_{i_r}\psi_{t}(x_{i_1}\cdots x_{i_{r-1}})]f_{it}y_s\\
=&\sum_{s, t}(-1)^{|x_s||x_t|}[x_{i_1}\cdots x_{i_{r-1}}\psi_{s}\psi_{t}(x_{i_r})+(-1)^{|x_{i_1}\cdots x_{i_{r-1}}||\psi_t(x_{i_r})|}\psi_{t}(x_{i_r})
\psi_{s}(x_{i_1}\cdots x_{i_{r-1}})\\&+(-1)^{|x_{i_r}||x_{i_1}\cdots x_{i_{r-1}}|}(x_{i_r}\psi_{s}\psi_{t}(x_{i_1}\cdots x_{i_{r-1}})+(-1)^{|x_{i_r}||\psi_{t}(x_{i_1}\cdots x_{i_{r-1}})|}
\psi_{t}(x_{i_1}\cdots x_{i_{r-1}})\psi_{s}(x_{i_r}))]f_{it}y_s
\end{align*}
and
\begin{align*}
&\sum_{s,
t}\psi_{t}\psi_{s}(x_{i_1}\cdots x_{i_{r-1}}\cdot x_{i_r})f_{it}y_s\\
=&\sum_{s,
t}\psi_{t}[x_{i_1}\cdots x_{i_{r-1}}\psi_{s}(x_{i_r})+(-1)^{|x_{i_r}||x_{i_1}\cdots x_{i_{r-1}}|}x_{i_r}\psi_{s}(x_{i_1}\cdots x_{i_{r-1}})]f_{it}y_s\\
=&\sum_{s,
t}[x_{i_1}\cdots x_{i_{r-1}}\psi_{t}\psi_{s}(x_{i_r})+(-1)^{|x_{i_1}\cdots x_{i_{r-1}}||\psi_s(x_{i_r})|}\psi_{s}(x_{i_r})
\psi_{t}(x_{i_1}\cdots x_{i_{r-1}})]f_{it}y_s\\&+\sum_{s,
t}(-1)^{|x_{i_r}||x_{i_1}\cdots x_{i_{r-1}}|}[x_{i_r}\psi_{t}\psi_{s}(x_{i_1}\cdots x_{i_{r-1}})+(-1)^{|x_{i_r}||\psi_{s}(x_{i_1}\cdots x_{i_{r-1}})|}
\psi_{s}(x_{i_1}\cdots x_{i_{r-1}})\psi_{t}(x_{i_r})]f_{it}y_s.
\end{align*}
Thus
\[\sum_{s,
t}(-1)^{|x_s||x_t|}\psi_{s}\psi_{t}(x_{i_1}\cdots x_{i_{r-1}}\cdot x_{i_r})f_{it}y_s=\sum_{s,
t}\psi_{t}\psi_{s}(x_{i_1}\cdots x_{i_{r-1}}\cdot x_{i_r})f_{it}y_s\]
by the induction hypothesis. Therefore, we finish the proof.
\end{proof}

\begin{lemma}\label{lem7}
In $(A^e, m, h)$, we have
\begin{enumerate}
  \item[(a)] $y_if=(-1)^{|x_i||f|}fy_i+\{x_i, f\}$ for $f\in A$,
  \item[(b)] $h(f)x_i=(-1)^{|x_i||f|}x_ih(f)+\{f, x_i\}$ for $f\in A$,
  \item[(c)] $y_ih(f)=(-1)^{|x_i||f|}h(f)y_i+h(\{x_i, f\})$ for $f\in A$.
\end{enumerate}
\end{lemma}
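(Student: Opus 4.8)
The plan is to reduce all three identities to the case of a monomial $f$ and then to read them off from the congruences already established in Lemmas \ref{lem2}, \ref{lem3} and \ref{lem4}, exploiting the fact that in the quotient $A^e=F(R)/J$ a congruence modulo $(J;w)$ becomes a genuine equality (if $p\equiv q\ \mathrm{mod}\ (J;w)$ then $p-q\in J$, so the images of $p$ and $q$ in $A^e$ coincide). Since each of the three asserted identities is $\Bbbk$-linear in $f$ (the bracket $\{\cdot,\cdot\}$, the map $h=\psi$, and left/right multiplication are all linear), and $R$ is spanned by the monomials $x_{i_1}\cdots x_{i_r}$, it suffices to take $f$ to be such a monomial. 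Part (a) is then immediate from Lemma \ref{lem2}: that lemma gives $y_if\equiv(-1)^{|x_i||f|}fy_i+\{x_i,f\}\ \mathrm{mod}\ (J;y_if)$, and projecting to $A^e$ yields the stated equality. Part (b) is identical in spirit: writing $h(f)=\psi(f)$, Lemma \ref{lem3} gives $\psi(f)x_i\equiv(-1)^{|x_i||f|}x_i\psi(f)+\{f,x_i\}\ \mathrm{mod}\ (J;\overline{\psi(f)}x_i)$, and passing to $A^e$ gives exactly $h(f)x_i=(-1)^{|x_i||f|}x_ih(f)+\{f,x_i\}$.

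For part (c) the substantive work begins. Again write $h(f)=\psi(f)$ and apply Lemma \ref{lem4}, which expresses $y_i\psi(f)$ in $A^e$ as
\[
(-1)^{|x_i||f|}\psi(f)y_i+\underbrace{\sum_{s,t}(-1)^{|x_i||\psi_s(f)|}\psi_s(f)\psi_t(f_{is})y_t}_{S_1}+\underbrace{\sum_{s,t}(-1)^{|x_i||\psi_t\psi_s(f)|}\psi_t\psi_s(f)f_{it}y_s}_{S_2}.
\]
The first term is precisely $(-1)^{|x_i||f|}h(f)y_i$, so it remains to identify $S_1+S_2$ with $h(\{x_i,f\})$. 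I would compute $h(\{x_i,f\})=\psi(\{x_i,f\})$ directly: starting from the formula $\{x_i,f\}=\sum_s(-1)^{|x_i||\psi_s(f)|}\psi_s(f)f_{is}$ (used inside the proof of Lemma \ref{lem4}, itself a consequence of Lemma \ref{lem2.11}) and applying the product rule $\psi(ab)=\sum_t[a\psi_t(b)+(-1)^{|a||b|}b\psi_t(a)]y_t$, one obtains $h(\{x_i,f\})=S_1+S_2'$, where $S_2'=\sum_{s,t}(-1)^{|x_i||\psi_s(f)|+|\psi_s(f)||f_{is}|}f_{is}\,\psi_t\psi_s(f)\,y_t$. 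Thus the entire lemma collapses to the single identity $S_2=S_2'$.

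This reconciliation is the main obstacle, and it is exactly where Lemma \ref{lem+} enters. I would move $f_{is}$ past $\psi_t\psi_s(f)$ in $S_2'$ using the graded commutativity of $R$, and then relabel the summation indices $s\leftrightarrow t$. Tracking the signs with $|\psi_s|=-|x_s|$ and $|f_{is}|=|x_i|+|x_s|$, the accumulated exponent of $S_2'$ reduces modulo $2$ to $|x_i||f|+|x_i||x_s|+|x_i||x_t|+|x_s||x_t|$, while that of $S_2$ reduces to $|x_i||f|+|x_i||x_s|+|x_i||x_t|$. Cancelling the common factor $(-1)^{|x_i||f|+|x_i||x_s|+|x_i||x_t|}$ from both summands, the claim $S_2=S_2'$ becomes precisely
\[
\sum_{s,t}(-1)^{|x_s||x_t|}\psi_s\psi_t(f)f_{it}y_s=\sum_{s,t}\psi_t\psi_s(f)f_{it}y_s,
\]
which is exactly Lemma \ref{lem+}. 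The only genuinely delicate point in the whole argument is the sign bookkeeping in this final collapse of exponents; everything else is a direct transcription of the earlier congruences into the quotient $A^e$.
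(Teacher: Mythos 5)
Your proposal is correct and follows essentially the same route as the paper: parts (a) and (b) are read off from Lemmas \ref{lem2} and \ref{lem3}, and part (c) is obtained by computing $\psi(\{x_i,f\})$ with the product rule for $\psi_t$ and reconciling the resulting second sum with the one in Lemma \ref{lem4} via Lemma \ref{lem+}. Your sign bookkeeping in that reconciliation (exponents $|x_i||f|+|x_i||x_s|+|x_i||x_t|+|x_s||x_t|$ versus $|x_i||f|+|x_i||x_s|+|x_i||x_t|$, leaving exactly the $(-1)^{|x_s||x_t|}$ discrepancy handled by Lemma \ref{lem+}) checks out, and is in fact spelled out more explicitly than in the paper.
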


\begin{proof}
From Lemmas \ref{lem2} and \ref{lem3}, it is easy to see (a) and (b).
Since $\psi(f)=\sum_{t}\psi_{t}(f)y_t$ and $\{x_i,
f\}=\sum_{s}(-1)^{|x_i||\psi_s(f)|}\psi_{s}(f)\{x_i, x_s\}$ for any
$f\in R$, we have that
\begin{equation}
\begin{split}
\psi(\{x_i, f\})&=\sum_{t}\psi_{t}(\{x_i, f\})y_t\\&\equiv
\sum_{t}\psi_{t}(\sum_{s}(-1)^{|x_i||\psi_s(f)|}\psi_{s}(f)f_{is})y_t\\
&\equiv\sum_{s,
t}(-1)^{|x_i||\psi_s(f)|}\psi_{s}(f)\psi_{t}(f_{is})y_t+\sum_{s,
t}(-1)^{|\psi_{s}(f)|(|x_i|+|f_{is}|)}f_{is}\psi_{t}\psi_{s}(f)y_t\\
&\equiv\sum_{s,
t}(-1)^{|x_i||\psi_{s}(f)|}\psi_{s}(f)\psi_{t}(f_{is})y_{t}+\sum_{s,
t}(-1)^{|x_i||\psi_{t}\psi_{s}(f)|}\psi_{t}\psi_{s}(f)f_{it}y_{s}
~mod J
\end{split}\nonumber
\end{equation}
by Lemma \ref{lem+}, where $f_{ij}=\{x_i, x_j\}$. Hence (c) follows from Lemma \ref{lem4}.
\end{proof}

\begin{theorem}\label{th2}
Let $R=T(V)/\langle x_{\alpha}\otimes
x_{\beta}-(-1)^{|x_{\alpha}||x_{\beta}|}x_{\beta}\otimes x_{\alpha}|
~\forall \alpha, ~\beta \in\Lambda \rangle$ be a DG Poisson algebra
with an arbitrary differential $d$ and Poisson structure $\{\cdot, \cdot\}$.
Then the universal enveloping algebra $R^{e}$ has a $\Bbbk$-linear
basis
\begin{center}
$\mathfrak{B}=\{x_1^{i_1}x_2^{i_2}\cdots x_n^{i_n}y_1^{j_1}y_2^{j_2}\cdots y_n^{j_n}|~i_r,
j_r=0, 1, 2\cdots\}$.
 \end{center}
\end{theorem}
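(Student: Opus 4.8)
The plan is to realize $R^{e}$ inside the Gr\"{o}bner--Shirshov framework of Subsection 3.1 and then invoke Proposition \ref{th1}. Concretely, I would take $T_0=F(R)$, so that the graded commutativity of the $x$'s is already absorbed into the ambient algebra together with its ordered-monomial $\Bbbk$-basis $\mathcal{A}$, and let $S$ consist of the two families of defining relations of $R^{e}$, namely
$$s^{(1)}_{ij}=y_ix_j-(-1)^{|x_i||x_j|}x_jy_i-\{x_i,x_j\}\quad(i,j\in\Lambda)$$
and
$$s^{(2)}_{ij}=y_iy_j-(-1)^{|x_i||x_j|}y_jy_i-\psi(\{x_i,x_j\})\quad(i>j),$$
so that $R^{e}=T_0/J$ is the DG algebra defined by $S$. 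The first step is to pin down the leading monomials under the order $\prec$. Since the degree compares $y$-counts before $x$-counts, the term $\{x_i,x_j\}\in R$ has strictly smaller degree than $y_ix_j$, and $\psi(\{x_i,x_j\})$ (which carries a single $y$) has strictly smaller degree than $y_iy_j$; comparing $y_ix_j$ with $x_jy_i$ and $y_iy_j$ with $y_jy_i$ lexicographically then gives $\overline{s^{(1)}_{ij}}=y_ix_j$ and $\overline{s^{(2)}_{ij}}=y_iy_j$. A monomial in $\mathcal{A}$ is therefore $S$-standard exactly when it contains no $y$ immediately followed by an $x$ and no descent among its $y$-indices, that is, exactly when it lies in $\mathfrak{B}$. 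Thus, by Proposition \ref{th1}, it suffices to prove that $S$ is closed under composition in $T_0$.

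Next I would enumerate the compositions. Because every $\overline{s}$ has length two, there are no compositions of inclusion, and a composition of intersection can occur only when a suffix of one leading monomial equals a prefix of another. Since leading monomials begin with a $y$, while $\overline{s^{(1)}_{ij}}$ ends in an $x$ and $\overline{s^{(2)}_{ij}}$ ends in a $y$, the only overlaps are
$$w=y_iy_jx_k\ (i>j)\ \text{from}\ (s^{(2)}_{ij},s^{(1)}_{jk}),\qquad w=y_iy_jy_k\ (i>j>k)\ \text{from}\ (s^{(2)}_{ij},s^{(2)}_{jk}).$$
It remains to check that each $(p,q)_w$ is congruent to $0$ modulo $(J;w)$, which I would do by straightening $(p,q)_w$ to normal form in the two available ways and comparing. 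The straightening tools are already in place: Lemma \ref{lem2} moves a $y_i$ past a monomial in the $x$'s, Lemma \ref{lem3} moves $\psi(f)$ past an $x_i$, and Lemma \ref{lem4} moves a $y_i$ past $\psi(f)$, with the compact packagings recorded in Lemma \ref{lem7}(a)--(c).

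The overlap $w=y_iy_jx_k$ reduces, once the $R$-valued terms are collected, to the graded Jacobi identity $\{x_i,\{x_j,x_k\}\}+\cdots=0$ combined with the biderivation rule, and hence vanishes modulo $J$. The genuinely delicate case---and the step I expect to be the main obstacle---is the all-$y$ overlap $w=y_iy_jy_k$. Here both straightenings produce, besides matching leading parts, a collection of $y$-degree-one terms of the shape $\psi_s(\cdots)\psi_t(\cdots)y_t$ and $\psi_t\psi_s(\cdots)f_{it}y_s$ (with $f_{it}=\{x_i,x_t\}$), and their cancellation is not a bare Jacobi identity but its image under $\psi$. This is precisely the content of Lemma \ref{lem5}, which records that the $\psi$-transform of the cyclic Jacobi sum is zero modulo $J$, while the auxiliary symmetry of the iterated anti-differentials in Lemma \ref{lem+} is what lets the two double sums be matched term by term. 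The only real work is then careful sign bookkeeping to bring the difference of the two reductions into exactly the form handled by Lemmas \ref{lem5} and \ref{lem+}; granting this, $S$ is closed under composition and Proposition \ref{th1} yields that $\mathfrak{B}$ is a $\Bbbk$-basis of $R^{e}$.
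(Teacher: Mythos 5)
Your proposal is correct and follows the paper's own strategy: reduce to Proposition \ref{th1}, identify the leading monomials $y_ix_j$ and $y_iy_j$, and resolve the two hard overlaps $y_iy_jx_k$ (via the graded Jacobi identity together with Lemmas \ref{lem2} and \ref{lem3}) and $y_iy_jy_k$ (via Lemmas \ref{lem4}, \ref{lem5} and \ref{lem+}), exactly as in the paper's Cases 3 and 2. The one place you diverge is the enumeration of compositions: the paper works, in effect, in the free algebra on $x_1,\dots,x_n,y_1,\dots,y_n$ and puts the graded-commutativity relations $x_{ij}=x_ix_j-(-1)^{|x_i||x_j|}x_jx_i$ into the generating set $S$ alongside $y_{ij}$ and $z_{ij}$, which forces it to check four compositions, the two extra ones being $(x_{ij},x_{jk})_{x_ix_jx_k}$ and $(z_{ij},x_{jk})_{y_ix_jx_k}$; you instead absorb the $x$-commutativity into the ambient algebra $T_0=F(R)$ and check only two. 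Your version is cleaner, but note that it leans on Proposition \ref{th1} in its full generality for a nontrivial ambient ideal $I$: the paper's Case 4, $(z_{ij},x_{jk})_{y_ix_jx_k}$ with $j>k$, is precisely the confluence check between the new relations and the reordering of $x$-blocks (it is resolved by the graded commutativity of $R$ acting on the bracket terms), and in your setup that verification is silently delegated to the hypothesis that $\mathcal{A}$ is a monomial basis of $T_0$ compatible with $\prec$. Since the paper states Proposition \ref{th1} in exactly that generality, your argument is complete as written; the paper simply pays for the safer, fully explicit route with two additional (easy) composition checks.
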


\begin{proof}
Since

$$R=\frac{T(V)}{\langle x_{i}\otimes
x_{j}-(-1)^{|x_{i}||x_{j}|}x_{j}\otimes x_{i}| ~\forall i, j
\in\Lambda \rangle}$$
and $\psi(x_ix_j-(-1)^{|x_{i}||x_{j}|}x_jx_i)=0$, the universal
enveloping algebra $R^{e}$ is $R^{e}=F(R)/J'$, where $J'$ is the DG
ideal generated by
\begin{center}
\begin{enumerate}
  \item $~~~~~~~~~~~~x_{ij}: x_ix_j-(-1)^{|x_{i}||x_{j}|}x_jx_i$,
  \item $~~~~~~~~~~~~y_{ij}: y_iy_j-(-1)^{|x_i||x_j|}y_jy_i-\psi(\{x_i, x_j\})$,
  \item $~~~~~~~~~~~~z_{ij}: y_ix_j-(-1)^{|x_i||x_j|}x_jy_i-\{x_i, x_j\}$,
\end{enumerate}
\end{center}
for all $i$, $j$. By Proposition \ref{th1}, it is enough to show that the
generators of $J'$ is closed under composition in $F(R)$. There are
only four possible compositions among the generators of $J'$:
\begin{itemize}
  \item $(x_{ij}, x_{jk})_{x_ix_jx_k}(i>j>k)$
  \item $(y_{ij}, y_{jk})_{y_iy_jy_k}(i>j>k)$
  \item $(y_{ij}, z_{jk})_{y_iy_jx_k}(i>j)$
  \item $(z_{ij}, x_{jk})_{y_ix_jx_k}(j>k)$
\end{itemize}
Case1. $(x_{ij}, x_{jk})_{x_ix_jx_k}(i>j>k)$
\begin{align*}
(x_{ij}, x_{jk})_{x_ix_jx_k}=&x_{ij}x_k-x_ix_{jk}\\
=&(x_ix_j-(-1)^{|x_{i}||x_{j}|}x_jx_i)x_k-x_i(x_jx_k-(-1)^{|x_{j}||x_{k}|}x_kx_j)\\
=&-(-1)^{|x_{i}||x_{j}|}x_jx_ix_k+(-1)^{|x_{j}||x_{k}|}x_ix_kx_j\\
\equiv&-(-1)^{2|x_{i}||x_{j}|}x_ix_jx_k+(-1)^{2|x_{j}||x_{k}|}x_ix_jx_k\equiv 0
~mod (J; x_ix_jx_k).
\end{align*}
Case2. $(y_{ij}, y_{jk})_{y_iy_jy_k}(i>j>k)$

Set $\{x_i, x_j\}=f_{ij}$. Since $\psi(f)=\sum_{s}\psi_{s}(f)y_{s}$
and $\{x_i, f\}=\sum_{s}(-1)^{|x_i||\psi_{s}(f)|}\psi_{s}(f)f_{is}$,
we have
\begin{align*}
(y_{ij}, y_{jk})_{y_iy_jy_k}=&y_{ij}y_k-y_iy_{jk}\\
=&[y_iy_j-(-1)^{|x_i||x_j|}y_jy_i-\psi(f_{ij})]y_k
-y_i[y_jy_k-(-1)^{|x_j||x_k|}y_ky_j-\psi(f_{jk})]\\
\equiv&-(-1)^{|x_i||x_j|}y_j[(-1)^{|x_i||x_k|}y_ky_i+\psi(f_{ik})]-\psi(f_{ij})y_k\\
&+(-1)^{|x_j||x_k|}[(-1)^{|x_i||x_k|}y_ky_i+\psi(f_{ik})]y_j+y_i\psi(f_{jk})\\
\equiv&-(-1)^{|x_i||x_j|+|x_i||x_k|}[(-1)^{|x_j||x_k|}y_ky_j+\psi(f_{jk})]y_i-(-1)^{|x_i||x_j|}y_j\psi(f_{ik})-\psi(f_{ij})y_k\\
&+(-1)^{|x_i||x_k|+|x_j||x_k|}y_k[(-1)^{|x_i||x_j|}y_jy_i+\psi(f_{ij})]+(-1)^{|x_j||x_k|}\psi(f_{ik})y_j+y_i\psi(f_{jk})\\
\equiv&[\sum_{s,
t}(-1)^{|x_i||\psi_{s}(f_{jk})|}\psi_{s}(f_{jk})\psi_{t}(f_{is})y_{t}+\sum_{s,t}(-1)^{|x_i||\psi_{t}\psi_{s}(f_{jk})|}\psi_{t}\psi_{s}(f_{jk})f_{it}y_{s}]\\
&+(-1)^{|x_i||f_{jk}|}[\sum_{s,
t}(-1)^{|x_j||\psi_{s}(f_{ki})|}\psi_{s}(f_{ki})\psi_{t}(f_{js})y_{t}+\sum_{s,t}(-1)^{|x_j||\psi_{t}\psi_{s}(f_{ki})|}\psi_{t}\psi_{s}(f_{ki})f_{jt}y_{s}]\\
&+(-1)^{|x_k||f_{ij}|}[\sum_{s,
t}(-1)^{|x_k||\psi_{s}(f_{ij})|}\psi_{s}(f_{ij})\psi_{t}(f_{ks})y_{t}+\sum_{s,t}(-1)^{|x_k||\psi_{t}\psi_{s}(f_{ij})|}\psi_{t}\psi_{s}(f_{ij})f_{kt}y_{s}]\\
\end{align*}
by Lemma \ref{lem4}. It is easy to see that
\begin{align*}
&\sum_{s, t}(-1)^{|x_k||\psi_{t}\psi_{s}(f_{ij})|}\psi_{t}\psi_{s}(f_{ij})f_{kt}y_{s}
\equiv\sum_{s,
t}(-1)^{|x_s||\psi_{s}(f_{ij})|}f_{ks}\psi_{t}\psi_{s}(f_{ij})y_t ~mod J
\end{align*}
by Lemma \ref{lem+}.

Hence
\begin{align*}
(y_{ij}, y_{jk})_{y_iy_jy_k}\equiv&\sum_{s,
t}(-1)^{|x_i||\psi_{s}(f_{jk})|}[\psi_{s}(f_{jk})\psi_{t}(f_{is})+(-1)^{|\psi_{s}(f_{jk})||f_{is}|}f_{is}\psi_{t}\psi_{s}(f_{jk})]y_t\\
&+\sum_{s,
t}(-1)^{|x_k||f_{ij}|+|x_k||\psi_{s}(f_{ij})|}[\psi_{s}(f_{ij})\psi_{t}(f_{ks})+(-1)^{|\psi_{s}(f_{ij})||f_{ks}|}f_{ks}\psi_{t}\psi_{s}(f_{ij})]y_t\\
&+\sum_{s,
t}(-1)^{|x_i||f_{jk}|+|x_j||\psi_{s}(f_{ki})|}[\psi_{s}(f_{ki})\psi_{t}(f_{js})+(-1)^{|\psi_{s}(f_{ki})||f_{js}|}f_{js}\psi_{t}\psi_{s}(f_{ki})]y_t\\
\equiv& ~0 ~mod (J; y_iy_jy_k)
\end{align*}
by Lemma \ref{lem5}.

Case3. $(y_{ij}, z_{jk})_{y_iy_jx_k}(i>j)$
\begin{align*}
(y_{ij}, z_{jk})_{y_iy_jx_k}=&y_{ij}x_k-y_iz_{jk}\\
=&(y_iy_j-(-1)^{|x_i||x_j|}y_jy_i-\psi(\{x_i, x_j\}))x_k-y_i(
y_jx_k-(-1)^{|x_j||x_k|}x_ky_j-\{x_j, x_k\})\\
\equiv&-(-1)^{|x_i||x_j|}y_j((-1)^{|x_i||x_k|}x_ky_i+\{x_i, x_k\})+(-1)^{|x_j||x_k|}((-1)^{|x_i||x_k|}x_ky_i+\{x_i, x_k\})y_j\\
&-\psi(\{x_i, x_j\})x_k+y_i\{x_j, x_k\}\\
\equiv&-(-1)^{|x_i||x_j|+|x_i||x_k|}((-1)^{|x_j||x_k|}x_ky_j+\{x_j,
x_k\})y_i-(-1)^{|x_i||x_j|}y_j\{x_i, x_k\}-\psi(\{x_i, x_j\})x_k\\
&+(-1)^{|x_j||x_k|+|x_i||x_k|}x_k((-1)^{|x_i||x_j|}y_jy_i+\psi(\{x_i,
x_j\}))+(-1)^{|x_j||x_k|}\{x_i, x_k\}y_j+y_i\{x_j, x_k\}\\
\equiv&-(-1)^{|x_i||x_j|+|x_i||x_k|}\{x_j,
x_k\}y_i-(-1)^{|x_i||x_j|}((-1)^{|x_j||\{x_i, x_k\}|}\{x_i, x_k\}y_j+\{x_j, \{x_i, x_k\}\})\\
&-((-1)^{|x_k||\{x_i, x_j\}|}x_k\psi(\{x_i, x_j\})+\{\{x_i, x_j\}, x_k\})+(-1)^{|x_j||x_k|}\{x_i, x_k\}y_j\\
&+(-1)^{|x_j||x_k|+|x_i||x_k|}x_k\psi(\{x_i,
x_j\})+(-1)^{|x_i||\{x_j, x_k\}|}\{x_j, x_k\}y_i+\{x_i, \{x_j, x_k\}\}\\
\equiv&-(-1)^{|x_i||x_j|}\{x_j, \{x_i, x_k\}\}-\{\{x_i, x_j\},
x_k\}+\{x_i, \{x_j, x_k\}\}\\
\equiv& ~0 ~mod (J; y_iy_jx_k)
\end{align*}
by the graded Jacobi identity, Lemmas \ref{lem2} and \ref{lem3} .

Case4. $(z_{ij}, x_{jk})_{y_ix_jx_k}(j>k)$
\begin{align*}
(z_{ij}, x_{jk})_{y_ix_jx_k}=&z_{ij}x_k-y_ix_{jk}\\
=&(y_ix_j-(-1)^{|x_i||x_j|}x_jy_i-\{x_i, x_j\})x_k-y_i(
x_jx_k-(-1)^{|x_j||x_k|}x_kx_j)\\
\equiv&-(-1)^{|x_i||x_j|}x_j((-1)^{|x_i||x_k|}x_ky_i+\{x_i,
x_k\})\\
&-\{x_i, x_j\}x_k+(-1)^{|x_j||x_k|}((-1)^{|x_i||x_k|}x_ky_i+\{x_i, x_k\})x_j\\
\equiv&-(-1)^{|x_i||x_j|+|x_i||x_k|+|x_j||x_k|}x_kx_jy_i-(-1)^{|x_i||x_j|}x_j\{x_i,
x_k\}-\{x_i, x_j\}x_k\\
&+(-1)^{|x_j||x_k|+|x_i||x_k|}x_k((-1)^{|x_i||x_j|}x_jy_i+\{x_i, x_j\})+(-1)^{|x_j||x_k|}\{x_i, x_k\}x_j\\
\equiv& ~0 ~mod (J; y_iy_jx_k).
\end{align*}
\end{proof}

\begin{prop}\label{prop1}
Let $R=T(V)/\langle x_{\alpha}\otimes
x_{\beta}-(-1)^{|x_{\alpha}||x_{\beta}|}x_{\beta}\otimes x_{\alpha}|
~\forall \alpha, ~\beta \in\Lambda \rangle$ be a DG Poisson algebra
with an arbitrary differential $d$ and Poisson structure $\{\cdot, \cdot\}$.
Suppose that $A=R/I$ is a DG Poisson homomorphic image of $R$, where
$I$ is a subset of monic elements of $R$. If $I\cup \psi (I)$ is
closed under composition in $R^{e}$, then
\begin{center}
$\{u\in\mathfrak{B}|u$ is $I\cup \psi(I)$-standard$\}$
 \end{center}
forms a $\Bbbk$-linear basis of $A^{e}$, where $\mathfrak{B}$ is the
one given in Theorem \ref{th2}.
\end{prop}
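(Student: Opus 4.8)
The plan is to realize $A^e$ as a quotient of $R^e$ and then apply Proposition \ref{th1} directly, with $R^e$ playing the role of the ambient DG algebra $T_0$. First I would record the identification $A^e = R^e/\langle S\rangle$, where $S:=I\cup\psi(I)$. Both $R^e$ and $A^e$ are quotients of the free DG algebra $T_X$ on the alphabet $X=\{x_1,\ldots,x_n,y_1,\ldots,y_n\}$: by Theorem \ref{th2} we have $R^e=T_X/\langle x_{ij},y_{ij},z_{ij}\rangle$, while from the construction of the universal enveloping algebra $A^e$ is cut out of $T_X$ by these same relations $x_{ij},y_{ij},z_{ij}$ together with $I$ and $\psi(I)$ (recall that $\psi(x_ix_j-(-1)^{|x_i||x_j|}x_jx_i)=0$, so the $x_{ij}$ contribute no further $\psi$-relation). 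Consequently the defining relations of $A^e$ are precisely those of $R^e$ augmented by $S$, that is, $A^e=R^e/\langle S\rangle$ as DG algebras.

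Next I would set up the hypotheses of Proposition \ref{th1} with $T_0:=R^e$. By Theorem \ref{th2} the set $\mathfrak{B}\subseteq X^*$ is a $\Bbbk$-linear basis of $R^e$ consisting of monomials, so $\mathfrak{B}$ takes the place of $\mathcal{A}$ and $R^e$ takes the place of $T_0$ in that proposition; the set $S=I\cup\psi(I)$ is a set of monic elements of $R^e$, and $A^e=R^e/\langle S\rangle$ is the DG algebra defined by $S$ in the sense of the Gr\"{o}bner--Shirshov setup. The hypothesis of the present proposition is exactly that $S$ is closed under composition in $R^e$, which is statement (\rmnum{1}) of Proposition \ref{th1}. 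Applying the implication (\rmnum{1})$\Rightarrow$(\rmnum{2}) then shows that the subset of $\mathfrak{B}$ consisting of $S$-standard, i.e. $I\cup\psi(I)$-standard, monomials forms a $\Bbbk$-linear basis of $A^e$, which is the assertion.

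The only genuine work lies in the first step: one must check that the monic generators of $I\subseteq R\subseteq R^e$ and the elements of $\psi(I)$ may be taken monic with respect to the monomial order $\prec$ used in Theorem \ref{th2} (rescaling by a leading coefficient does not alter the ideal they generate), and that replacing the free algebra $T_X$ by $R^e$ as the ground DG algebra in Proposition \ref{th1} is legitimate. Both points are immediate consequences of Theorem \ref{th2}, because $\mathfrak{B}$ is an honest monomial basis of $R^e$: the order $\prec$, the notion of leading monomial $\bar{p}$, and the congruence relation modulo $(J;w)$ all descend unambiguously from $T_X$ to $R^e$. Once this bookkeeping is in place, no further composition computations are required, since closure under composition is supplied by hypothesis.
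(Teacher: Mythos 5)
Your argument is correct and takes exactly the paper's route: the paper's entire proof is the single sentence that the result ``follows immediately from Proposition \ref{th1} and Theorem \ref{th2}'', which is precisely the application you carry out, with $R^{e}$ in the role of $T_0$, $\mathfrak{B}$ as the monomial basis $\mathcal{A}$, and $S=I\cup\psi(I)$ as the set of monic generators defining $A^{e}=R^{e}/\langle S\rangle$. Your extra bookkeeping (the identification of $A^{e}$ as this quotient, and the monic-rescaling remark) only makes explicit what the paper leaves implicit.
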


\begin{proof}
It follows immediately from Proposition \ref{th1} and Theorem \ref{th2}.
\end{proof}

\begin{remark}
Although the degree of the Poisson bracket is zero in our definition
of DG Poisson algebra, the result obtained in this paper are
also true for DG Poisson algebras of degree $n$ with some expected
signs, where $n\in \mathbb{Z}$ is the degree of the Poisson bracket.
\end{remark}

\begin{exa}
Let $$A=\frac{\Bbbk<x_{1}, x_{2}>}{(x_{1}x_{2}, ~x_{2}x_{1}, ~x_{2}^2)},$$
where $|x_{1}=2, |x_{2}|=3$.
Let $d: A\rightarrow A$ be a $\Bbbk$-linear map of degree 1 by
\begin{center}
$d(x_{1})=x_{2}, ~~~d(x_{2})=0$.
 \end{center}
Moreover, we can define the Poisson bracket by
\begin{center}
$\{x_{1}, x_{2}\}=-\{x_{2}, x_{1}\}=x_{2}^2, ~~~\{x_{1}, x_{1}\}=\{x_{2}, x_{2}\}=0$.
 \end{center}
Note that $0=x_2^2\in A$, it is easy to see that $A$ is a DG Poisson algebra of degree 1, where the degree of the Poisson bracket is 1.
Here, we can suppose that
\begin{center}
$R=\Bbbk<x_{1}, x_{2}>/(x_{1}x_{2}-x_{2}x_{1}, ~x_{2}^2)$
 \end{center}
and  $I=<x_1x_{2}>$ is a DG Poisson ideal of $R$. Given an ordering on the set of generators of $R^e$ by
\begin{center}
$x_{1}<x_{2}<y_{1}<y_{2}$.
 \end{center}
Since the set consisting of $x_1x_{2}, \psi(x_1x_{2})=x_1y_2+x_2y_1$
is closed under composition in $R^e $, the universal enveloping algebra $A^e$ has a $\Bbbk$-linear basis
\begin{center}
$\{x_{1}^{i_1}x_{2}^{i_2}y_{1}^{j_1}y_{2}^{j_2} | ~i_1i_{2}=0, i_2j_1=0\}$
 \end{center}
by Proposition \ref{prop1}.
\end{exa}

\begin{corollary}\label{cor1}
Let $R=T(V)/\langle x_{\alpha}\otimes
x_{\beta}-(-1)^{|x_{\alpha}||x_{\beta}|}x_{\beta}\otimes x_{\alpha}|
~\forall \alpha, ~\beta \in\Lambda \rangle$ be a DG Poisson algebra
with an arbitrary differential $d$ and Poisson structure $\{\cdot, \cdot\}$.
Then the universal enveloping algebra $R^{e}$ is a DG free left and
right $R$-module with basis
\begin{center}
$\mathfrak{C}=\{y_1^{j_1}y_2^{j_2}\cdots y_n^{j_n}|~j_r=0, 1, 2\cdots\}$.
 \end{center}
\end{corollary}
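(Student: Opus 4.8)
The plan is to read everything off the $\Bbbk$-basis $\mathfrak{B}$ produced in Theorem \ref{th2}, together with the fact that $\{x_1^{i_1}\cdots x_n^{i_n}\}$ is a $\Bbbk$-basis of $R$ (as $R$ is a graded commutative polynomial algebra). First I would treat the left module structure, which is immediate: each element of $\mathfrak{B}$ factors as $x_1^{i_1}\cdots x_n^{i_n}y_1^{j_1}\cdots y_n^{j_n}=(x_1^{i_1}\cdots x_n^{i_n})\cdot(y_1^{j_1}\cdots y_n^{j_n})$, i.e.\ as an element of $R$ acting on the left of a monomial of $\mathfrak{C}$. Hence $R^{e}=\sum_{c\in\mathfrak{C}}Rc$, and this sum is direct with each $Rc$ free of rank one: expanding any relation $\sum_{c}r_c c=0$ in the $\Bbbk$-basis $\{x^{I}\}$ of $R$ turns it into a $\Bbbk$-linear relation among the distinct elements of $\mathfrak{B}$, forcing every $r_c=0$. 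This yields $R^{e}=\bigoplus_{c\in\mathfrak{C}}Rc$, free as a left $R$-module on $\mathfrak{C}$.

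The right module case is the real content, and the obstacle is that the monomials of $\mathfrak{B}$ carry all the $x$'s on the left, whereas a right-module basis needs the $y$'s on the left. So I would first show that the reversed set $\{y_1^{j_1}\cdots y_n^{j_n}x_1^{i_1}\cdots x_n^{i_n}\}$ is again a $\Bbbk$-basis of $R^{e}$. To push the $y$'s left I invoke Lemma \ref{lem2}, $y_i f\equiv(-1)^{|x_i||f|}fy_i+\{x_i,f\}$ for a monomial $f\in R$: since $\{x_i,f\}\in R$ carries no $y$, each commutation produces one term with the same number of $y$'s and correction terms with strictly fewer $y$'s. Let $F_q$ be the $\Bbbk$-subspace of $R^{e}$ spanned by $\{x^{I}y^{J}:|J|\le q\}$. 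Because neither the relation $z_{ij}$ nor $y_{ij}$ raises the $y$-count, this is an exhaustive increasing filtration, and the above commutation gives $y^{J}x^{I}\equiv\pm\,x^{I}y^{J}\pmod{F_{|J|-1}}$ for every $J$ with $|J|=q$.

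Now $\{x^{I}y^{J}:|J|=q\}$ descends to a $\Bbbk$-basis of $F_q/F_{q-1}$ by Theorem \ref{th2}, so the congruence shows that $\{y^{J}x^{I}:|J|=q\}$ also descends to a basis of each associated graded piece; the standard filtered-basis argument (a family whose images are bases of every $F_q/F_{q-1}$ is itself a basis) then gives that $\{y^{J}x^{I}\}$ is a $\Bbbk$-basis of $R^{e}$. Reading $y^{J}x^{I}=(y_1^{j_1}\cdots y_n^{j_n})\cdot(x_1^{i_1}\cdots x_n^{i_n})$ as a monomial of $\mathfrak{C}$ acted on the right by $x^{I}\in R$, the same linear-independence argument as in the left case yields $R^{e}=\bigoplus_{c\in\mathfrak{C}}cR$, free as a right $R$-module on $\mathfrak{C}$.

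Finally, for the DG assertion I would note that $\partial$ is a graded derivation of the DG algebra $R^{e}$ (Lemma \ref{lem1}) and that the inclusion $R\hookrightarrow R^{e}$ is a DG algebra map, so the Leibniz rules $\partial(rc)=d(r)c+(-1)^{|r|}r\partial(c)$ and $\partial(cr)=\partial(c)r+(-1)^{|c|}c\,d(r)$ hold automatically; thus both free $R$-module structures are compatible with the differentials, i.e.\ $R^{e}$ is DG free on $\mathfrak{C}$ on each side. The only step requiring care is the reversal in the right case: the point is that the filtration by $y$-count is respected, so the transition matrix from $\{y^{J}x^{I}\}$ to $\mathfrak{B}$ is signed-triangular in the associated graded. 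This is exactly where I must use that the brackets $\{x_i,x_j\}$ land in $R$ (hence drop the $y$-degree) rather than relying on any naive bidegree being preserved, since the Poisson bracket can be an arbitrary polynomial.
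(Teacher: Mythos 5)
Your proposal is correct and follows essentially the same route as the paper: both arguments reduce the right-module statement to showing that $\{y_1^{j_1}\cdots y_n^{j_n}x_1^{i_1}\cdots x_n^{i_n}\}$ is a $\Bbbk$-basis of $R^{e}$, using the commutation relations of Lemma \ref{lem2}/Lemma \ref{lem7}(a) and the fact that the correction terms $\{x_i,f\}\in R$ strictly drop the $y$-degree. Your packaging via the filtration by $y$-count and its associated graded is just a cleaner rephrasing of the paper's ``maximal monomial in the ordering $\prec$'' triangularity argument (note that $\prec$ compares $y$-degree first), so the two proofs are the same in substance.
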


\begin{proof}
By Theorem \ref{th2}, $R^{e}$ is a DG free left $R$-module. To show that
$R^{e}$ is a DG free right $R$-module, it is enough to prove that
\begin{center}
$\mathfrak{B'}=\{y_1^{j_1}y_2^{j_2}\cdots y_n^{j_n}x_1^{i_1}x_2^{i_2}\cdots\cdot x_n^{i_n}|~j_r, i_r=0, 1, 2\cdots\}$
\end{center}
forms a $\Bbbk$-linear basis for $R^{e}$.

We show that $\mathfrak{B'}$ is $\Bbbk$- linearly independent. Let
\begin{equation}
\begin{split}
&\sum_{1\leq i_1+\cdots+i_n}\alpha_{i_1\cdots i_n}y_1^{i_1}\cdots y_n^{i_n}+\sum_{1\leq
j_1+\cdots+j_n, 1\leq
l_1+\cdots+l_n}\beta_{j_1\cdots j_nl_1\cdots l_n}y_1^{j_1}\cdots y_n^{j_n}x_1^{l_1}\cdots x_n^{l_n}\\
&+\sum_{1\leq m_1+\cdots+m_n}\gamma_{m_1\cdots m_n}x_1^{m_1}\cdots x_n^{m_n}+\delta1=0,\\
\end{split}
\end{equation}
where $\alpha_{i_1\cdots i_n}, \beta_{j_1\cdots j_nl_1\cdots l_n}, \gamma_{m_1\cdots m_n}$ and $\delta\in \Bbbk$. If there exist nonzero $\beta_{j_1\cdots j_nl_1\cdots l_n}$'s in the second term of formula (3.1),
assume that
\begin{center}
$z=y_1^{j_1}\cdots y_n^{j_n}x_1^{l_1}\cdots x_n^{l_n}$
 \end{center}
 is maximal among the monomials with nonzero coefficients in the
 second term of formula (3.1). Using (a) of Lemma \ref{lem7}, all monomials in the second term of formula (3.1) can be
expressed as $\Bbbk$-linear combinations of monomials in
$\mathfrak{B}$ of Theorem \ref{th2}. Then the coefficient of $z$ is the
coefficient of $x_1^{l_1}\cdots x_n^{l_n}y_1^{j_1}\cdots y_n^{j_n}$. Thus all
$\beta_{j_1\cdots j_nl_1\cdots l_n}$'s in the
second term of formula (3.1) are zero and hence all
$\alpha_{i_1\cdots i_n}, \gamma_{m_1\cdots m_n}$
and $\delta$ in formula (3.1) are also zero by Theorem \ref{th2}. By
Lemma \ref{lem7}, it is easy to see that all monomials in $\mathfrak{B}$
of Theorem \ref{th2} are spanned by $\mathfrak{B'}$. Therefore
$\mathfrak{B'}$ forms a $\Bbbk$-linear basis of $R^{e}$, as
required.
\end{proof}

\medskip
\section{Simple DG Poisson modules}
In this section, for a given DG Poisson algebra $A=R/I$, where
$R$ and $A$ are defined as in Section 3, we prove that a DG symplectic ideal $P$ of
$A$ is the annihilator of a simple DG Poisson $A$-module.

\begin{lemma}\label{lem8}
Let $\mathcal{A}$ be the set of all DG ideals in a DG Poisson
algebra $R$ and let $\mathcal{B}$ be the set of all left DG ideals
of $R^{e}$. For $I\in \mathcal{A}$,
let $\overline{I}$ be the left DG ideal of $R^{e}$ generated by $I$.
Then the map $I\rightarrow \overline{I}$ is an injective map from
$\mathcal{A}$ into $\mathcal{B}$ and preserves inclusion, that is,
$\overline{I}\cap R=I$. In particular, $I$ is a DG Poisson ideal of
$R$ if and only if $\overline{I}$ is an DG ideal of $R^{e}$.
\end{lemma}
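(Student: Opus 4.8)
The plan is to realize $\overline{I}$ concretely through the free-module structure of $R^{e}$ and then read off all four assertions, the genuinely new content being the two-sided criterion. First I would observe that, since $I$ is in particular an ideal of $R$ and $1\in R^{e}$, the left ideal generated by $I$ is simply $\overline{I}=R^{e}I$. This is closed under the differential: for $r\in R^{e}$ and $a\in I$ one has $\partial(ra)=\partial(r)a+(-1)^{|r|}r\,\partial(a)$, and $\partial(a)=d(a)\in I$ because $I$ is a DG ideal, so both summands lie in $R^{e}I$. Hence $\overline{I}$ is a left DG ideal and the assignment $I\mapsto\overline{I}$ genuinely lands in $\mathcal{B}$.

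Next I would invoke Corollary \ref{cor1}, by which $R^{e}=\bigoplus_{w\in\mathfrak{C}}wR$ is a \emph{free right} $R$-module on $\mathfrak{C}=\{y_{1}^{j_{1}}\cdots y_{n}^{j_{n}}\}$. Expanding each coefficient in this basis and using that $I$ is an ideal of the commutative ring $R$, one obtains $\overline{I}=R^{e}I=\bigoplus_{w\in\mathfrak{C}}wI$. Since $1\in\mathfrak{C}$ singles out the summand $R=1\cdot R$, the directness of the decomposition immediately yields $\overline{I}\cap R=I$. Injectivity is then formal, as $\overline{I}=\overline{I'}$ forces $I=\overline{I}\cap R=\overline{I'}\cap R=I'$, and inclusion-preservation is clear since $I\subseteq I'$ gives $\bigoplus_{w}wI\subseteq\bigoplus_{w}wI'$.

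For the final equivalence I would use the commutation rule of Lemma \ref{lem7}(a), namely $y_{i}a=(-1)^{|x_{i}||a|}ay_{i}+\{x_{i},a\}$ for $a\in I$, rewritten as $\{x_{i},a\}=y_{i}a-(-1)^{|x_{i}||a|}ay_{i}$. Since $\overline{I}$ is automatically a left DG ideal, it is a DG ideal of $R^{e}$ exactly when it is stable under right multiplication by each generator $x_{k}$ and $y_{i}$; stability under $x_{k}$ always holds because $wa\,x_{k}=w(ax_{k})\in wI$ with $ax_{k}\in I$. If $I$ is a DG Poisson ideal, then $\{x_{i},a\}\in I\subseteq\overline{I}$, and as $y_{i}a\in R^{e}I=\overline{I}$ the displayed identity forces $ay_{i}\in\overline{I}$; then $w\,ay_{i}\in w\overline{I}\subseteq R^{e}\overline{I}=\overline{I}$, so $\overline{I}$ is right-stable and hence two-sided. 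Conversely, if $\overline{I}$ is two-sided then both $y_{i}a$ and $ay_{i}$ lie in $\overline{I}$, so $\{x_{i},a\}\in\overline{I}$; but $\{x_{i},a\}\in R$, whence $\{x_{i},a\}\in\overline{I}\cap R=I$, and Lemma \ref{lem2.11} expands $\{r,a\}=\sum_{\alpha}\psi_{\alpha}(r)\{x_{\alpha},a\}\in I$ for every $r\in R$, giving $\{R,I\}\subseteq I$; together with the standing hypothesis $d(I)\subseteq I$ this makes $I$ a DG Poisson ideal.

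The main obstacle is precisely this last equivalence: one must show that right-stability of $\overline{I}$ is \emph{exactly} the Poisson condition $\{R,I\}\subseteq I$ and nothing more. The delicate point is that the commutator $ay_{i}-(-1)^{|x_{i}||a|}y_{i}a$ is forced by the defining relations of $R^{e}$ to equal the bracket $-\{x_{i},a\}$, which is an element of $R$; the free-module identity $\overline{I}\cap R=I$ is then what converts membership in $\overline{I}$ back into membership in $I$, while Lemma \ref{lem2.11} is what propagates the condition from the generators $x_{i}$ to all of $R$. Without the PBW-basis of Theorem \ref{th2} and the resulting right-freeness from Corollary \ref{cor1}, the clean intersection $\overline{I}\cap R=I$ would be unavailable and the whole argument would collapse.
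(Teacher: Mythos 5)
Your proposal is correct and follows essentially the same route as the paper: both rest on the free right $R$-module structure of $R^{e}$ from Corollary \ref{cor1} to get $\overline{I}=R^{e}I=\bigoplus_{w\in\mathfrak{C}}wI$ and hence $\overline{I}\cap R=I$, and both derive the two-sidedness criterion from the commutation relations of Lemma \ref{lem7}. Your only addition is to spell out the final equivalence (which the paper dismisses as ``follows immediately from Lemma \ref{lem7}''), correctly using $\overline{I}\cap R=I$ to pull $\{x_{i},a\}$ back into $I$ and Lemma \ref{lem2.11} to propagate the Poisson condition from the generators $x_{i}$ to all of $R$.
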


\begin{proof}
Let $I\in \mathcal{A}$. Since $R^{e}$ is a DG free right
$R$-module with basis $\mathfrak{C}$ given in Corollary \ref{cor1}, every
element of $\overline{I}=R^{e}I$ is of the form
\begin{center}
$1a_0+Y_1a_1+Y_2a_2+\cdots+Y_sa_s$
\end{center}
for some $a_i\in I, 1\neq Y_i\in \mathfrak{C}$. Hence, if
$I\subseteq J$ then $\overline{I}\subseteq \overline{J}$ and
$I=\overline{I}\cap R$ by Corollary \ref{cor1}, thus the map $I\rightarrow
\overline{I}$ is injective. The second statement follows immediately
from Lemma \ref{lem7}.
\end{proof}

\begin{remark}
The map $I\rightarrow \overline{I}$ from $\mathcal{A}$ into
$\mathcal{B}$ is not surjective. In \cite{OPS}, we can see that  in
the situation of the DG Poisson algebra concentrated in degree 0
with trivial differential, this map is not a surjective map.
\end{remark}

\begin{lemma}\label{lem6}
Let $(R^{e}, m_R, h_R)$ be the universal enveloping algebra of a DG
Poisson algebra $(R, d, \{\cdot, \cdot\})$. If
$I$ is a DG Poisson ideal of $R$ and $Q$ is the DG ideal of $R^{e}$
generated by $I$ and $h_R(I)$, then $(R^{e}/Q, m', h')$ is the
universal enveloping algebra of $A=R/I$, where
\begin{center}
$m': A\rightarrow R^{e}/Q, ~~~~~~~~m'(r+I)=m_R(r)+Q$,
\end{center}
\begin{center}
$h': A\rightarrow R^{e}/Q, ~~~~~~~~h'(r+I)=h_R(r)+Q$.
\end{center}
\end{lemma}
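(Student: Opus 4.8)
The plan is to show that $(R^e/Q, m', h')$ satisfies the universal property defining the universal enveloping algebra of $A = R/I$, with $m'$ playing the role of $\alpha$ and $h'$ the role of $\beta$. The first step is to check that $Q$ is a DG ideal of $R^e$, so that the differential $\partial$ of $R^e$ descends to $R^e/Q$ and makes it an object of $\textbf{DGA}$. Since $Q$ is the DG ideal generated by $I$ and $h_R(I)$, it suffices to see that $\partial$ sends these generators into $Q$; but $I$ is a DG Poisson ideal, so $d(I) \subseteq I$, and using that $m_R$ is a DG algebra map and $h_R$ is a DG Lie algebra map we get $\partial(a) = m_R(d(a)) \in I$ and $\partial(h_R(a)) = h_R(d(a)) \in h_R(I)$ for $a \in I$. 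Hence $\partial(Q) \subseteq Q$.

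Next I would verify that $m'$ and $h'$ are well defined and carry the required structure. Well-definedness is immediate from $m_R(I) \subseteq Q$ and $h_R(I) \subseteq Q$. That $m'$ is a DG algebra map, that $h'$ is a DG Lie algebra map into $(R^e/Q)_P$, and that the two compatibility identities
\[
m'(\{a,b\}) = h'(a)m'(b) - (-1)^{|a||b|}m'(b)h'(a), \qquad
h'(ab) = m'(a)h'(b) + (-1)^{|a||b|}m'(b)h'(a)
\]
hold, all follow by applying the canonical projection $\pi \colon R^e \to R^e/Q$ to the corresponding statements for $(R^e, m_R, h_R)$, together with the fact that the bracket and differential of $A$ are those induced from $R$ (this is exactly where $I$ being a genuine DG Poisson ideal is used).

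The core of the argument is the universal property. Given any $(D, \delta) \in \textbf{DGA}$ with a DG algebra map $f \colon A \to D$ and a DG Lie algebra map $g \colon A \to D_P$ satisfying the two relations, I would precompose with the DG Poisson projection $\pi_I \colon R \to A$ to obtain $f\pi_I$ and $g\pi_I$, which are respectively a DG algebra map and a DG Lie algebra map satisfying the corresponding relations for $R$. By the universal property of $R^e$ there is a unique DG algebra map $\Phi \colon R^e \to D$ with $\Phi m_R = f\pi_I$ and $\Phi h_R = g\pi_I$. The key point is then that $\Phi$ kills $Q$: for $a \in I$ we have $\Phi(a) = f(\pi_I(a)) = 0$ and $\Phi(h_R(a)) = g(\pi_I(a)) = 0$, and since $\Phi$ is a DG algebra map while $Q$ is the DG ideal generated by $I$ and $h_R(I)$, it follows that $\Phi(Q) = 0$. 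Therefore $\Phi$ factors as $\Phi = \phi\pi$ for a unique DG algebra map $\phi \colon R^e/Q \to D$, and unwinding the definitions gives $\phi m' = f$ and $\phi h' = g$. Uniqueness of $\phi$ holds because $R^e/Q$ is generated as an algebra by the images of the $x_i$ and $y_i$, i.e.\ by $m'(A)$ and $h'(A)$, on which $\phi$ is already determined.

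I expect the main obstacle to be the factorization step $\Phi(Q) = 0$: one must be careful that vanishing of $\Phi$ on the two families of generators $I$ and $h_R(I)$ really forces the whole DG ideal $Q$ to die, which relies on $\Phi$ respecting both the product and the differential and on $Q$ being generated by these elements as a DG ideal. The other point demanding care, already flagged above, is that all the defining identities for $m'$ and $h'$ are stated with respect to the induced Poisson bracket on $A$, so one must invoke that $I$ is a DG Poisson ideal at each place where a bracket is projected.
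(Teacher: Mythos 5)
Your proposal is correct and follows essentially the same route as the paper: precompose $f$ and $g$ with the projection $\pi'\colon R\to A$, invoke the universal property of $R^{e}$ to obtain $\Phi\colon R^{e}\to D$, observe that $\Phi$ kills the generators $I$ and $h_R(I)$ of $Q$ and hence all of $Q$, factor through $R^{e}/Q$, and deduce uniqueness from the fact that $R^{e}/Q$ is generated by $m'(A)$ and $h'(A)$. The extra checks you include (that $\partial$ preserves $Q$ and that $m'$, $h'$ inherit the required identities) are points the paper leaves implicit, but they do not change the argument.
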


\begin{proof}
Since  $Q$ is the DG ideal of $R^{e}$ generated by $I$ and $h_R(I)$,
we have
\begin{center}
$Q=R^{e}I+R^{e}h_R(I)R^{e}$
\end{center}
by Lemma \ref{lem8}. Let $(D, \delta)\in \textbf{DGA}$ with a DG algebra
map $f: (A, d)\rightarrow (D, \delta)$ and a DG Lie algebra map
$g: (A, \{\cdot, \cdot\}_A, d)\rightarrow (D_P, [\cdot, \cdot], \delta)$ satisfying
\begin{align*}
f(\{a, b\})&=g(a)f(b)-(-1)^{|a||b|}f(b)g(a),\\
g(ab)&=f(a)g(b)+(-1)^{|a||b|}f(b)g(a),
\end{align*}
for any homogeneous elements $a, b\in A$. Denote by $\pi'$ the
canonical projection from $R$ onto $A$, then there exists a unique
DG algebra map $\phi$ from $R^{e}$ into $D$ such that $\phi
m_R=f\pi'$ and $\phi h_R=g\pi'$ since $(R^{e}, m_R, h_R)$ is the
universal enveloping algebra of a DG Poisson algebra $(R, \{\cdot,
\cdot\}, d)$. Hence
\begin{center}
$\phi(I)=\phi m_R(I)=f\pi'(I)=0$ \quad and \quad $\phi h_R(I)=g\pi'(I)=0$,
\end{center}
so the DG ideal $Q$ is contained in the kernel of $\phi$. Thus
$\phi$ induces the DG algebra map $\phi'$ from $R^{e}/Q$ into $D$
such that $\phi'(u+Q)=\phi(u)$ for all $u\in R^{e}$. Clearly, for
all $r+I\in A$, we have that
\begin{equation}
\begin{split}
&\phi'm'(r+I)=\phi'(m_R(r)+Q)=\phi(m_R(r))=f\pi'(r)=f(r+I),\\
&\phi'h'(r+I)=\phi'(h_R(r)+Q)=\phi(h_R(r))=g\pi'(r)=g(r+I).
\end{split}\nonumber
\end{equation}
If $\phi_1$ is an DG algebra map from $R^{e}/Q$ into $D$ such that
$\phi_1m'=f$ and $\phi_1h'=g$, then we have $\phi_1=\phi'$ since
$R^{e}/Q$ is generated by $m'(A)$ and $h'(A)$. It completes the
proof.
\end{proof}

\begin{remark}\label{rmk1}
From the Lemma \ref{lem6}, we have the following two observations:
\begin{itemize}
  \item Since $R^{e}$ is the universal enveloping algebra of a DG Poisson
algebra $R$, there is an DG algebra homomorphism $\eta$ from $R^{e}$
into $A^e$ such that $\eta m_R=m_A\pi'$ and $\eta h_R=h_A\pi'$.
\begin{eqnarray*}
\xymatrix{
  R \ar[d]_{m_R, ~h_R}   \ar[rr]^{\pi'}
         &\       & A \ar[d]^{m_A, ~h_A}  \\
  R^{e}  \ar[rr]_{\eta}
         &  \     & A^e             }
\end{eqnarray*}
  \item $\eta$ is an epimorphism, since $A^e$
is generated by $A=m_A(A)$ and $h_A(A)$. Set
\begin{center}
$I=ker \pi', ~~~~Q=R^{e}I+R^{e}h_R(I)R^{e}$.
\end{center}
We have $ker(\eta)=Q$.
\end{itemize}
\end{remark}

\begin{lemma}\label{lem9}
If $f$ and $g$ are DG $\Bbbk$-linear maps from a DG Poisson algebra
$A$ into a DG algebra $B$ such that
\begin{align*}
f(\{a, b\})&=g(a)f(b)-(-1)^{|a||b|}f(b)g(a),\\
g(ab)&=f(a)g(b)+(-1)^{|a||b|}f(b)g(a),
\end{align*}
for any homogeneous elements $a, b\in A$, then
\begin{align*}
f(\{a, b\})&=f(a)g(b)-(-1)^{|a||b|}g(b)f(a),\\
g(ab)&=g(a)f(b)+(-1)^{|a||b|}g(b)f(a).
\end{align*}
\end{lemma}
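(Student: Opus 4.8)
The plan is to establish the two asserted identities separately, relying only on the graded antisymmetry of the Poisson bracket together with the two hypotheses already in force; graded commutativity of the product will be seen to play no essential role. I treat the bracket identity first, then bootstrap it to obtain the product identity.

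First I would prove the bracket identity $f(\{a,b\})=f(a)g(b)-(-1)^{|a||b|}g(b)f(a)$. By graded antisymmetry, $\{a,b\}=-(-1)^{|a||b|}\{b,a\}$, so $f(\{a,b\})=-(-1)^{|a||b|}f(\{b,a\})$. Applying the first hypothesis with the roles of $a$ and $b$ interchanged gives $f(\{b,a\})=g(b)f(a)-(-1)^{|a||b|}f(a)g(b)$. Substituting this in and using $(-1)^{2|a||b|}=1$ collapses the signs to exactly the claimed form, with no further input needed.

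For the product identity $g(ab)=g(a)f(b)+(-1)^{|a||b|}g(b)f(a)$, I would exploit the fact that the right-hand side of the first hypothesis and the right-hand side of the just-proved bracket identity both equal $f(\{a,b\})$. Equating them yields the auxiliary relation $g(a)f(b)-(-1)^{|a||b|}f(b)g(a)=f(a)g(b)-(-1)^{|a||b|}g(b)f(a)$. Solving this for $g(a)f(b)$ and then adding $(-1)^{|a||b|}g(b)f(a)$ to both sides rewrites $g(a)f(b)+(-1)^{|a||b|}g(b)f(a)$ as $f(a)g(b)+(-1)^{|a||b|}f(b)g(a)$, which is precisely $g(ab)$ by the second hypothesis.

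The computations are routine, so the only genuine subtlety — and the one place where care is needed — is that the product identity cannot be obtained by the naive shortcut of applying graded commutativity $ab=(-1)^{|a||b|}ba$ directly to the second hypothesis: that substitution, after one applies the second hypothesis to $g(ba)$, merely reproduces the hypothesis one started with. One is therefore forced to route through the bracket identity (equivalently, through the antisymmetry axiom) in order to produce the auxiliary relation above. Apart from that observation, the proof is simply a matter of tracking the Koszul signs correctly, the recurring simplification being $(-1)^{2|a||b|}=1$.
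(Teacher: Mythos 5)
Your proof is correct, and it follows a genuinely different route from the paper's. The paper first \emph{adds} the two hypotheses so that the terms $\pm(-1)^{|a||b|}f(b)g(a)$ cancel, obtaining $f(\{a,b\})+g(ab)=g(a)f(b)+f(a)g(b)$; it then writes the same identity with $a$ and $b$ interchanged, converts the swapped version back via graded antisymmetry \emph{and} graded commutativity, and separates the resulting combined equation into the two desired identities by recognizing a copy of the original hypothesis inside (the displayed computation there contains a harmless sign typo, an extra factor $(-1)^{|a||b|}$, but the method is sound). You instead decouple the two conclusions: the bracket identity falls out of the first hypothesis and graded antisymmetry alone, with no reference to the product or to the second hypothesis, and the product identity then follows by equating the two expressions for $f(\{a,b\})$ and substituting into the second hypothesis. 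This is slightly sharper in that it isolates exactly which axioms each conclusion depends on, and it avoids manipulating the combined sum identity; your closing observation that naively applying graded commutativity to $g(ab)=g((-1)^{|a||b|}ba)$ is circular is accurate and explains why some detour through antisymmetry is unavoidable. Both arguments are elementary sign-chasing of comparable length.
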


\begin{proof}
For any homogeneous elements $a, b\in A$, we have
\begin{align*}
&f(\{a, b\})+g(ab)=g(a)f(b)+f(a)g(b),\\
&f(\{b, a\})+g(ba)=g(b)f(a)+f(b)g(a).
\end{align*}
Note that
\begin{center}
$\{a, b\}=-(-1)^{|a||b|}\{b, a\}, ~~ab=(-1)^{|a||b|}ba$.
\end{center}
 Hence
\begin{align*}
2(-1)^{|a||b|}f(\{a,
b\})&=(-1)^{|a||b|}g(a)f(b)+(-1)^{|a||b|}f(a)g(b)-g(b)f(a)-f(b)g(a)\\
&=(-1)^{|a||b|}f(a)g(b)-g(b)f(a)+(-1)^{|a||b|}(-1)^{|a||b|}f(\{a,
b\})
\end{align*}
and
\begin{align*}
2(-1)^{|a||b|}g(ab)&=(-1)^{|a||b|}g(a)f(b)+(-1)^{|a||b|}f(a)g(b)+g(b)f(a)+f(b)g(a)\\
&=(-1)^{|a||b|}g(a)f(b)+g(b)f(a)+(-1)^{|a||b|}g(ab).
\end{align*}
Therefore, we have the conclusion.
\end{proof}

\begin{lemma}\label{lem10}
Let $(R^{e}, m_R, h_R)$ be the universal enveloping algebra of a DG
Poisson algebra $R$. Then
\begin{equation}
h_R(x_i^{n})=
   \begin{cases}
   (k+k(-1)^{(2k-1)|x_i|^2})y_ix_i^{2k-1}, &if~n=2k,\\
   (k+1+k(-1)^{(2k-1)|x_i|^2})y_ix_i^{2k}, &if~n=2k+1.\\
   \end{cases}
\end{equation}
\end{lemma}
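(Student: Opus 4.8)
The plan is to reduce everything to the single structural identity that $h_R$ enjoys in $R^e$, namely the ``right-hand'' Leibniz rule $h_R(ab)=h_R(a)\,b+(-1)^{|a||b|}h_R(b)\,a$, and then to solve the resulting one-term recursion. To obtain this identity I would apply Lemma \ref{lem9} with $f=m_R$ and $g=h_R$: the pair $(m_R,h_R)$ satisfies the defining compatibility relations of the universal enveloping algebra, so Lemma \ref{lem9} yields $h_R(ab)=h_R(a)m_R(b)+(-1)^{|a||b|}h_R(b)m_R(a)$, which under the identification $m_R(b)=b$ is exactly the rule above. The point of using this version (rather than $h_R(ab)=a\,h_R(b)+(-1)^{|a||b|}b\,h_R(a)$) is that, by writing $x_i^{n}=x_i^{n-1}\cdot x_i$, it keeps the single letter $y_i=h_R(x_i)$ pinned at the far left and never forces any reordering of $x_i$ against $y_i$; this is what makes the target form $y_ix_i^{m}$ appear directly.

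First I would record the base data $h_R(x_i)=\psi(x_i)=\sum_{\alpha}\psi_\alpha(x_i)y_\alpha=y_i$. Writing $H_n:=h_R(x_i^{n})$ and $q:=(-1)^{|x_i|^2}\in\{1,-1\}$, applying the Leibniz rule to $x_i^{n}=x_i^{n-1}\cdot x_i$ gives
\[
H_n=H_{n-1}x_i+(-1)^{(n-1)|x_i|^2}\,y_i x_i^{n-1}=H_{n-1}x_i+q^{n-1}\,y_i x_i^{n-1},
\]
with $H_1=y_i$. A straightforward induction then shows $H_n=a_n\,y_i x_i^{n-1}$, where the scalars satisfy $a_1=1$ and $a_n=a_{n-1}+q^{n-1}$, that is,
\[
a_n=\sum_{j=0}^{n-1}q^{j}=\sum_{j=0}^{n-1}(-1)^{j|x_i|^2}.
\]
The induction step is immediate: if $H_{n-1}=a_{n-1}y_ix_i^{n-2}$, then $H_{n-1}x_i=a_{n-1}y_ix_i^{n-1}$, and adding the second summand collects the coefficient $a_{n-1}+q^{n-1}$.

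It remains to evaluate the truncated geometric sum and split by parity, using $q^2=1$. For $n=2k$ I would pair the terms as $\sum_{l=0}^{k-1}(q^{2l}+q^{2l+1})=\sum_{l=0}^{k-1}(1+q)=k(1+q)$, and since $2k-1$ is odd one has $q=(-1)^{(2k-1)|x_i|^2}$, so $a_{2k}=k+k(-1)^{(2k-1)|x_i|^2}$ and $H_{2k}=a_{2k}\,y_ix_i^{2k-1}$, as claimed. For $n=2k+1$ the extra term contributes $q^{2k}=1$, giving $a_{2k+1}=k(1+q)+1=(k+1)+kq=k+1+k(-1)^{(2k-1)|x_i|^2}$ and $H_{2k+1}=a_{2k+1}\,y_ix_i^{2k}$.

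I do not expect a serious obstacle: the only thing to watch is the sign bookkeeping, specifically that the exponent $(n-1)|x_i|^2$ in the Leibniz step collapses to $q^{n-1}$ and that the geometric-series coefficient matches the two closed forms after pairing. Had one instead computed $H_n$ in the form $(\text{scalar})\,x_i^{n-1}y_i$, the natural output of $\psi$, one would then be forced to commute $y_i$ past $x_i^{n-1}$ via Lemma \ref{lem7}(a); this is harmless, since when $|x_i|$ is odd the relevant bracket $\{x_i,x_i^{n-1}\}$ vanishes in exactly the degrees where the coefficient is nonzero, but it is cleaner to avoid it entirely by invoking the right-hand Leibniz rule from the outset.
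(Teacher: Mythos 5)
Your proposal is correct and takes essentially the same route as the paper: both derive the right-hand Leibniz rule $h_R(ab)=h_R(a)b+(-1)^{|a||b|}h_R(b)a$ from Lemma \ref{lem9} and induct on $n$ via the decomposition $x_i^{n}=x_i^{n-1}\cdot x_i$, splitting by parity at the end. Your repackaging of the recursion as the truncated geometric sum $a_n=\sum_{j=0}^{n-1}(-1)^{j|x_i|^2}$ is merely a tidier organization of the same parity bookkeeping the paper carries out step by step.
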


\begin{proof}
Since
\begin{center}
$h_R(ab)=h_R(a)m_R(b)+(-1)^{|a||b|}h_R(b)m_R(a)=h_R(a)b+(-1)^{|a||b|}h_R(b)a$
\end{center}
for all elements $a, b\in R$ by Lemma \ref{lem9}. Observe that formula
(4.1) is true on $n=1$ and $n=2$ since
\begin{equation*}
   \begin{cases}
   h_R(x_i)=y_i,\\
   h_R(x_i^2)=h_R(x_i)x_i+(-1)^{|x_i||x_i|}h_R(x_i)x_i=(1+(-1)^{|x_i|^2})y_ix_i.
   \end{cases}
\end{equation*}
Set
\begin{equation}
\bigtriangleup (n)=
   \begin{cases}
   k+k(-1)^{(2k-1)|x_i|^2}), &if~n=2k,\\
   k+1+k(-1)^{(2k-1)|x_i|^2}, &if~n=2k+1.\nonumber\\
   \end{cases}
\end{equation}
Thus in order to complete the proof, we only
need to show
\begin{center}
$h_R(x_i^{n+1})=\bigtriangleup (n+1)y_ix_i^{n}$
\end{center} provided that $h_R(x_i^{n})=\bigtriangleup (n)y_ix_i^{n-1}$.

Indeed if $n=2k$, then
\begin{equation}
\begin{split}
h_R(x_i^{2k+1})&=h_R(x_i^{2k})x_i+(-1)^{|x_i^{2k}||x_i|}h_R(x_i)x_i^{2k}\\
&=\bigtriangleup (2k)y_ix_i^{2k-1}x_i+(-1)^{|x_i^{2k}||x_i|}y_ix_i^{2k}\\
&=(k+1+k(-1)^{(2k-1)|x_i|^2})y_ix_i^{2k}\\&=\bigtriangleup (n+1)y_ix_i^{n}.\\
\end{split}\nonumber
\end{equation}
On the other hand, if $n=2k+1$, then
\begin{equation}
\begin{split}
h_R(x_i^{2k+2})&=h_R(x_i^{2k+1})x_i+(-1)^{|x_i^{2k+1}||x_i|}h_R(x_i)x_i^{2k+1}\\
&=\bigtriangleup (2k+1)y_ix_i^{2k}x_i+(-1)^{|x_i^{2k+1}||x_i|}y_ix_i^{2k+1}\\
&=(k+1+(k+1)(-1)^{(2k+1)|x_i|^2})y_ix_i^{2k+1}\\&=\bigtriangleup (n+1)y_ix_i^{n},\\
\end{split}\nonumber
\end{equation}
as required.
\end{proof}

Using the above notation, we have the following lemma.
\begin{lemma}\label{lem11}
Let $(R^{e}, m_R, h_R)$ be the universal enveloping algebra of a DG
Poisson algebra $R$. Then
\begin{equation}
\begin{split}
h_R(x_1^{i_1}x_2^{i_2}\cdots x_n^{i_n})=\sum_{k=1}^n(-1)^{|x_1^{i_1}\cdots x_{k-1}^{i_{k-1}}||x_k|}\bigtriangleup(i_k)y_kx_1^{i_1}\cdots x_{k-1}^{i_{k-1}}x_k^{i_k-1}x_{k+1}^{i_{k+1}}\cdots x_n^{i_n}.
\end{split}
\end{equation}
\end{lemma}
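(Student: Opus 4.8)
The plan is to induct on the number $n$ of distinct variables, peeling off the last factor $x_n^{i_n}$ at each step and combining the graded Leibniz-type rule for $h_R$ coming from Lemma \ref{lem9} with the single-variable formula already established in Lemma \ref{lem10}. The base case $n=1$ is precisely Lemma \ref{lem10}, since for $k=1$ the sign exponent $|x_1^{i_1}\cdots x_{k-1}^{i_{k-1}}||x_k|$ is formed from the empty product and hence vanishes, so the unique surviving term is $\bigtriangleup(i_1)y_1x_1^{i_1-1}$.

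For the inductive step I would set $f=x_1^{i_1}\cdots x_{n-1}^{i_{n-1}}$ and $g=x_n^{i_n}$, so that the monomial in question is $fg$. By Lemma \ref{lem9} (together with the identification $m_R(r)=r$) the map $h_R$ satisfies $h_R(fg)=h_R(f)m_R(g)+(-1)^{|f||g|}h_R(g)m_R(f)=h_R(f)g+(-1)^{|f||g|}h_R(g)f$. The first summand $h_R(f)g$ is disposed of directly by the induction hypothesis: each term of $h_R(f)$ has the form $(-1)^{|x_1^{i_1}\cdots x_{k-1}^{i_{k-1}}||x_k|}\bigtriangleup(i_k)\,y_k x_1^{i_1}\cdots x_k^{i_k-1}\cdots x_{n-1}^{i_{n-1}}$ for $k=1,\dots,n-1$, and right multiplication by $g=x_n^{i_n}$ merely appends $x_n^{i_n}$ on the right, reproducing verbatim the $k=1,\dots,n-1$ terms of the claimed formula.

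It then remains to match the second summand $(-1)^{|f||g|}h_R(g)f$ with the missing $k=n$ term. Using Lemma \ref{lem10} we have $h_R(g)=\bigtriangleup(i_n)y_nx_n^{i_n-1}$, so this summand equals $(-1)^{|f|\,i_n|x_n|}\bigtriangleup(i_n)\,y_nx_n^{i_n-1}f$. Moving $x_n^{i_n-1}$ to the right of $f$ via the graded-commutation relations of $R$ (which remain valid in $R^e$ through the relations $x_{ij}$) introduces the sign $(-1)^{(i_n-1)|x_n||f|}$, and the two signs combine into $(-1)^{|f||x_n|(2i_n-1)}=(-1)^{|f||x_n|}$ because $2i_n-1$ is odd. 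The summand thus becomes $(-1)^{|x_1^{i_1}\cdots x_{n-1}^{i_{n-1}}||x_n|}\bigtriangleup(i_n)\,y_n x_1^{i_1}\cdots x_{n-1}^{i_{n-1}}x_n^{i_n-1}$, which is exactly the $k=n$ term; adding the two summands yields the full sum over $k=1,\dots,n$ and closes the induction.

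I expect the only real obstacle to be this final sign collapse: one must track the degree $|f|$ simultaneously through the Leibniz rule of Lemma \ref{lem9} and through the graded commutation of $x_n^{i_n-1}$ past $f$, and verify that the accumulated exponent $|f||x_n|(2i_n-1)$ reduces, by the parity of $2i_n-1$, to the single factor $(-1)^{|f||x_n|}$ demanded by the statement. Everything else is a routine application of the induction hypothesis and of the previously proved lemmas.
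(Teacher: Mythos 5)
Your proposal is correct and follows essentially the same route as the paper: induction on the number of indeterminates, splitting $x_1^{i_1}\cdots x_n^{i_n}$ as $fg$ with $g=x_n^{i_n}$, applying the Leibniz-type identity $h_R(fg)=h_R(f)g+(-1)^{|f||g|}h_R(g)f$ from Lemma \ref{lem9} together with Lemma \ref{lem10}, and then graded-commuting $x_n^{i_n-1}$ back past $f$. The sign collapse $(-1)^{(2i_n-1)|x_n||f|}=(-1)^{|x_n||f|}$ that you flag as the only delicate point is exactly the step the paper performs (implicitly) in its final displayed equality.
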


\begin{proof}
For each monomial $x_1^{i_1}x_2^{i_2}\cdots x_n^{i_n}$ of $R$, we proceed the
proof using induction on the number of indeterminates. By
Lemma \ref{lem10}, it is clear that formula (4.2) is true on the
$x_1^{i_1}$. Thus in order to complete the proof, we only need to
show the formula (4.2) on the $x_1^{i_1}x_2^{i_2}\cdots x_n^{i_n}$ provided
that
\begin{equation}
\begin{split}
h_R(x_1^{i_1}x_2^{i_2}\cdots x_{n-1}^{i_{n-1}})=\sum_{k=1}^{n-1}(-1)^{|x_1^{i_1}\cdots x_{k-1}^{i_{k-1}}||x_k|}\bigtriangleup(i_k)y_kx_1^{i_1}\cdots x_{k-1}^{i_{k-1}}x_k^{i_k-1}x_{k+1}^{i_{k+1}}\cdots x_{n-1}^{i_{n-1}}.
\end{split}\nonumber
\end{equation}
Since
\begin{center}
$h_R(ab)=h_R(a)m_R(b)+(-1)^{|a||b|}h_R(b)m_R(a)=h_R(a)b+(-1)^{|a||b|}h_R(b)a$
\end{center}
for all elements $a, b\in R$ by Lemma \ref{lem9}, we have
\begin{equation}
\begin{split}
h_R(x_1^{i_1}x_2^{i_2}\cdots x_n^{i_n})=&h_R(x_1^{i_1}x_2^{i_2}\cdots x_{n-1}^{i_{n-1}})x_n^{i_n}
+(-1)^{|x_1^{i_1}x_2^{i_2}\cdots x_{n-1}^{i_{n-1}}||x_n^{i_n}|}h_R(x_n^{i_n})x_1^{i_1}x_2^{i_2}\cdots x_{n-1}^{i_{n-1}}\\
=&(\sum_{k=1}^{n-1}(-1)^{|x_1^{i_1}\cdots x_{k-1}^{i_{k-1}}||x_k|}\bigtriangleup(i_k)y_kx_1^{i_1}\cdots
x_{k-1}^{i_{k-1}}x_k^{i_k-1}x_{k+1}^{i_{k+1}}\cdots x_{n-1}^{i_{n-1}})x_n^{i_n}\\
&+(-1)^{|x_1^{i_1}x_2^{i_2}\cdots x_{n-1}^{i_{n-1}}||x_n^{i_n}|}\bigtriangleup
(i_n)y_nx_n^{i_n-1}x_1^{i_1}x_2^{i_2}\cdots x_{n-1}^{i_{n-1}}\\
=&\sum_{k=1}^{n-1}(-1)^{|x_1^{i_1}\cdots x_{k-1}^{i_{k-1}}||x_k|}\bigtriangleup(i_k)y_kx_1^{i_1}\cdots
x_{k-1}^{i_{k-1}}x_k^{i_k-1}x_{k+1}^{i_{k+1}}\cdots x_{n-1}^{i_{n-1}}x_n^{i_n}\\
&+(-1)^{|x_1^{i_1}x_2^{i_2}\cdots x_{n-1}^{i_{n-1}}||x_n|}\bigtriangleup
(i_n)y_nx_1^{i_1}x_2^{i_2}\cdots x_{n-1}^{i_{n-1}}x_n^{i_n-1}\\
=&\sum_{k=1}^{n}(-1)^{|x_1^{i_1}\cdots x_{k-1}^{i_{k-1}}||x_k|}\bigtriangleup(i_k)y_kx_1^{i_1}\cdots
x_{k-1}^{i_{k-1}}x_k^{i_k-1}x_{k+1}^{i_{k+1}}\cdots x_{n-1}^{i_{n-1}}x_n^{i_n},\\
\end{split}\nonumber
\end{equation}
as required.
\end{proof}

\begin{prop}\label{th3}
Let $(R^{e}, m_R, h_R)$ be the universal enveloping algebra of a DG
Poisson algebra $R$. Then, for every
monomial  $x_1^{i_1}x_2^{i_2}\cdots x_n^{i_n}\in R$, the expression of
$z=y_1^{j_1}y_2^{j_2}\cdots y_n^{j_n}h_R(x_1^{i_1}x_2^{i_2}\cdots x_n^{i_n})$
as a right $R$-combination of monomial in $\mathfrak{C}$ of
Corollary \ref{cor1} is of the form
\begin{center}
$k_1Y_1a_1+k_2Y_2a_2+\cdots+k_sY_sa_s, ~k_i\in \Bbbk, ~1\neq
Y_i\in\mathfrak{C}$ and $a_i\in R$.
\end{center}
That is, the coefficient of 1 under the expression of $z$ as a right
$R$-combination of monomials in $\mathfrak{C}$ is zero.
\end{prop}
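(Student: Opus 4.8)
The plan is to isolate the ``$y$-free part'' of $R^{e}$ and show that $z$ has none of it. Since $R^{e}$ is a free right $R$-module on $\mathfrak{C}=\{y_1^{j_1}\cdots y_n^{j_n}\}$ by Corollary \ref{cor1}, write $\epsilon\colon R^{e}\to R$ for the $R$-linear projection onto the coefficient of the basis element $1$; the assertion of the proposition is precisely that $\epsilon(z)=0$. The first step is to strip off $h_R$ using Lemma \ref{lem11}, which expresses $h_R(x_1^{i_1}\cdots x_n^{i_n})$ as a right $R$-combination $\sum_k c_k\,y_k\,m_k$ of the single generators $y_k$ (with $c_k\in\Bbbk$ and $m_k\in R$), so that no summand is $y$-free. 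Hence $z=\sum_k c_k\,(Y'y_k)\,m_k$ with $Y'=y_1^{j_1}\cdots y_n^{j_n}$. Because each $m_k$ already sits on the right and lies in $R$, reducing $Y'y_k=\sum_{Z}Z\,b_Z^{(k)}$ to standard form and then multiplying by $m_k$ leaves every term standard, so $\epsilon(z)=\sum_k c_k\,\epsilon(Y'y_k)\,m_k$. Everything therefore reduces to the single claim: for any word $W=y_{l_1}\cdots y_{l_p}$ in the generators with $p\ge 1$, one has $\epsilon(W)=0$.

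I would prove this claim by induction on the length $p$. For $p=1$ it is immediate, as $y_{l_1}$ is itself a basis element of $\mathfrak{C}$. For the inductive step I sort $W$ into the increasing order of $\mathfrak{C}$ by repeatedly applying the defining relation $y_ay_b=(-1)^{|x_a||x_b|}y_by_a+\psi(\{x_a,x_b\})$ of $R^{e}$. The fully sorted word has $y$-degree $p$ and contributes nothing to $\epsilon$; the only possible $y$-free contributions come from the correction terms $\psi(\{x_a,x_b\})=\sum_s\psi_s(\{x_a,x_b\})y_s$ produced at each out-of-order transposition, together with the terms created when the left coefficients $\psi_s(\{x_a,x_b\})\in R$ are moved past the remaining $y$'s via relation (a) of Lemma \ref{lem7}, namely $y_i f=(-1)^{|x_i||f|}fy_i+\{x_i,f\}$. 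All intermediate terms that still carry at least one $y$ vanish under $\epsilon$ by the induction hypothesis, leaving only the genuinely $y$-free residues.

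The heart of the argument — and the step I expect to be the main obstacle — is proving that these residues cancel. The smallest instance already contains the whole mechanism: for $p=2$ with $l_1>l_2$, a direct computation with relation (a) yields $\epsilon(y_{l_1}y_{l_2})=\sum_{s,t}\psi_t\psi_s(\{x_{l_1},x_{l_2}\})\,\{x_t,x_s\}$. To kill this I would first record the auxiliary fact that the anti-differentials graded-commute, $\psi_s\psi_t=(-1)^{|x_s||x_t|}\psi_t\psi_s$, which follows by an easy induction on word length from the Leibniz rule defining the $\psi_\alpha$ and is essentially the symmetry already used in Lemma \ref{lem+}. Feeding this together with the graded antisymmetry $\{x_t,x_s\}=-(-1)^{|x_t||x_s|}\{x_s,x_t\}$ into the double sum and relabelling $s\leftrightarrow t$ forces $\Sigma=-\Sigma$ for $\Sigma=\sum_{s,t}\psi_t\psi_s(\{x_{l_1},x_{l_2}\})\{x_t,x_s\}$, whence $\Sigma=0$. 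For the general inductive step the accumulated residues organize into sums of exactly this shape, a graded-symmetric double anti-differential paired against the graded-antisymmetric bracket, supplemented by Jacobi-type combinations handled as in Lemma \ref{lem5}; the same symmetry/antisymmetry cancellation then disposes of them.

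The delicate points I would watch are entirely a matter of sign bookkeeping: keeping the Koszul signs consistent when commuting each $\psi_s(\{x_a,x_b\})$ past the intervening $y$'s, and treating the diagonal terms $s=t$ separately, since $\{x_s,x_s\}$ need not vanish in odd degree, so that the cancellation argument remains valid over an arbitrary base field rather than only in characteristic $\neq 2$. Once the claim $\epsilon(W)=0$ is in hand, the reduction of the first paragraph gives $\epsilon(z)=\sum_k c_k\,\epsilon(Y'y_k)\,m_k=0$, which is the statement of the proposition.
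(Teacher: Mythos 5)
Your opening reduction is sound and matches the paper's: the coefficient-of-$1$ projection $\epsilon$ is right $R$-linear, and Lemma \ref{lem11} turns $h_R(x_1^{i_1}\cdots x_n^{i_n})$ into a right $R$-combination $\sum_k c_k y_k m_k$, so the problem becomes $\epsilon(Y'y_k)=0$. But the route you then take has a genuine gap. Your induction on word length rests on the assertion that ``all intermediate terms that still carry at least one $y$ vanish under $\epsilon$ by the induction hypothesis.'' Those intermediate terms have the form $g\,W'$ with $g\in R$ sitting on the \emph{left} of a shorter word $W'$, and $\epsilon$ is only \emph{right} $R$-linear: already $\epsilon(g\,y_l)=\mp\{x_l,g\}$, which is generically nonzero even though $\epsilon(y_l)=0$. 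So knowing $\epsilon(W')=0$ tells you nothing about $\epsilon(gW')$; pushing each left coefficient to the right spawns a new bracket term at every $y$ it crosses, each of which again carries a left coefficient inside a shorter word, and the cascade of $y$-free residues is far larger than the single double sum you exhibit for $p=2$. Your $p=2$ cancellation $\sum_{s,t}\psi_t\psi_s(f)\{x_t,x_s\}=0$ is correct (pair $(s,t)$ with $(t,s)$; only the diagonal needs the characteristic caveat you flag), but for $p\ge 3$ the claim that the residues ``organize into sums of exactly this shape, supplemented by Jacobi-type combinations'' is precisely the hard combinatorial content, and it is asserted rather than proved.

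The paper's proof avoids every one of these cancellations, and the device it uses is one you already have in hand: since $\psi(f)=h_R(f)$, Lemma \ref{lem11} expresses the correction term $h_R(\{x_a,x_b\})$ \emph{directly} as a right $R$-combination $\sum_l c_l\,y_{k_l}\,m_l$ --- no left coefficient ever has to be commuted past a $y$. Concretely, writing $Y'=Y''y_n$ (or $Y''y_{n-1}$ when $j_n=0$), the relation $y_ny_i=(-1)^{|x_n||x_i|}y_iy_n+h_R(\{x_n,x_i\})$ produces two kinds of terms: $Y''y_i$ handled by the inductive hypothesis and then multiplied on the right by the \emph{maximal} generator $y_n$, so that $Y_jy_n$ stays standard and no re-sorting residues appear (Lemma \ref{lem7}(a) only moves the right coefficient $a_j$ past $y_n$, producing another right coefficient $\{x_n,a_j\}$); and $Y''\,y_{k_l}\,m_l$, which the inductive hypothesis handles at once. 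In particular, the statement $\epsilon(\psi(f))=0$ that costs you the whole symmetric-versus-antisymmetric argument is an immediate corollary of Lemma \ref{lem11}. I recommend reorganizing your induction around the special words $Y'y_i$ with $Y'$ standard and systematically invoking Lemma \ref{lem11} for the correction terms; as written, the inductive step does not close.
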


\begin{proof}
Since
\begin{center}
$h_R(ab)=m_R(a)h_R(b)+(-1)^{|a||b|}m_R(b)h_R(a)=ah_R(b)+(-1)^{|a||b|}bh_R(a)$,
\end{center}
for all elements $a, b\in R$, we have $h_R(1)=0$, that is,
$h_R(k)=0$ for every $k\in \Bbbk$. Express
$h_R(x_1^{i_1}x_2^{i_2}\cdots x_n^{i_n})$ as given in Lemma \ref{lem11} and
then it suffices to prove the statement for the case
$z=y_1^{j_1}y_2^{j_2}\cdots y_n^{j_n}y_i$.

For a monomial $y_1^{j_1}y_2^{j_2}\cdots y_n^{j_n}$, we proceed the
proof using induction on the $r=j_1+j_2+\cdots+j_n$. If $r=0$ or $1$, then it
is trivial. Assume that the statement is true for monomials with
length less than $j_1+j_2+\cdots+j_n$, we should proof the statement is
also true for monomials with length equal $j_1+j_2+\cdots+j_n$. There are two possibilities: (\uppercase\expandafter{\romannumeral1}) $j_n\neq 0$, (\uppercase\expandafter{\romannumeral2}) $j_n=0$ (without loss of generality, we can further assume that $j_{n-1}\neq 0$).

Case \uppercase\expandafter{\romannumeral1}. $j_n\neq 0$: Note that
\begin{center}
$y_1^{j_1}y_2^{j_2}\cdots y_n^{j_n}y_i=y_1^{j_1}y_2^{j_2}\cdots y_n^{j_n-1}((-1)^{|x_n||x_i|}y_iy_n+h_R(\{x_n,
x_i\}))$
\end{center}
by Lemma \ref{lem7}. By the induction hypothesis, we have the expression of
$y_1^{j_1}y_2^{j_2}\cdots y_n^{j_n-1}y_i$ as a right $R$-combination of
monomial in $\mathfrak{C}$ of Corollary \ref{cor1} is of the form
\begin{center}
$k_1Y_1a_1+k_2Y_2a_2+\cdots+k_sY_sa_s, ~k_i\in \Bbbk, ~1\neq
Y_i\in\mathfrak{C}$ and $a_i\in R$.
\end{center}
Then the result on the case
\begin{center}
$(-1)^{|x_n||x_i|}y_1^{j_1}y_2^{j_2}\cdots y_n^{j_n-1}y_iy_n$
\end{center}
is true by $(a)$ of Lemma \ref{lem7}. On the other hand, since $\{x_n,
x_i\}$ can expressed as polynomial in $R$, we can see that the result on
the case
\begin{center}
$y_1^{j_1}y_2^{j_2}\cdots y_n^{j_n-1}h_R(\{x_n, x_i\})$
\end{center}
is also true by Lemma \ref{lem11} and the induction hypothesis.

Case \uppercase\expandafter{\romannumeral2}. $j_n=0$: Note that
\begin{center}
$y_1^{j_1}y_2^{j_2}\cdots y_{n-1}^{j_{n-1}}y_i=y_1^{j_1}y_2^{j_2}\cdots y_{n-1}^{j_{n-1}-1}((-1)^{|x_{n-1}||x_i|}y_iy_{n-1}+h_R(\{x_{n-1},
x_i\}))$.
\end{center}
Similarly, we have
\begin{align*}
y_1^{j_1}y_2^{j_2}\cdots y_{n-1}^{j_{n-1}-1}y_iy_{n-1}&=(k_1'Y_1'a_1'+k_2'Y_2'a_2'+\cdots+k_s'Y_s'a_s')y_{n-1}\\&=\sum_{i=1}^s(-1)^{|x_{n-1}||a_i'|}k_i'Y_i'(y_{n-1}a_i'-
\{x_{n-1}, a_i'\})
\end{align*}
by Lemma \ref{lem7} and the induction hypothesis, where $~k_i'\in \Bbbk, ~1\neq
Y_i'\in\mathfrak{C}$ and $a_i'\in R$.
Thus, the result on the case
\begin{center}
$\sum_i(-1)^{|x_{n-1}||a_i'|}k_i'Y_i'y_{n-1}a_i'$
\end{center}
is true by the case \uppercase\expandafter{\romannumeral1}. Further, similar to the proof of the case \uppercase\expandafter{\romannumeral1}, it is easy to see that the result on the case \uppercase\expandafter{\romannumeral2} is also true.

Therefore, we finish the proof.
\end{proof}

\begin{lemma}\label{lem12}
Let $(R^{e}, m_R, h_R)$ be the universal enveloping algebra of a DG
Poisson algebra $R$. If $M\neq R$ is
a DG ideal of $R$ and $Q$ is a DG Poisson ideal contained in $M$,
then
\begin{center}
$R^{e}M+R^{e}h_R(Q)R^{e}\neq R^{e}.$
\end{center}
\end{lemma}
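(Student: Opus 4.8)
The plan is to argue by contradiction: assume $R^{e}M+R^{e}h_R(Q)R^{e}=R^{e}$, so that $1$ lies in this ideal, and contradict $M\neq R$. The main tool is the right $R$-module structure. By Corollary \ref{cor1}, $R^{e}$ is a free right $R$-module on $\mathfrak{C}=\{y_1^{j_1}\cdots y_n^{j_n}\}$, so there is a right $R$-linear projection $\varepsilon:R^{e}\to R$ onto the summand $1\cdot R$ corresponding to the empty word $1\in\mathfrak{C}$ (i.e. the coefficient of $1$). Since $M$ is a proper DG ideal we have $1\notin M$, while $\varepsilon(1)=1$; hence it suffices to prove $\varepsilon\bigl(R^{e}M+R^{e}h_R(Q)R^{e}\bigr)\subseteq M$.

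Step 1: reduce the two-sided ideal to left ideals. I would first show $R^{e}h_R(Q)R^{e}\subseteq R^{e}h_R(Q)+R^{e}Q$. Because $Q$ is a DG Poisson ideal, $R^{e}Q$ is a two-sided DG ideal of $R^{e}$ by Lemma \ref{lem8}, so $L:=R^{e}h_R(Q)+R^{e}Q$ is a left ideal; it then suffices to prove $h_R(Q)R^{e}\subseteq L$, whence $R^{e}h_R(Q)R^{e}\subseteq R^{e}L\subseteq L$. I establish $h_R(q)u\in L$ for every $q\in Q$ and every monomial $u$ in the generators $x_i,y_i$ by induction on the length of $u$, pushing $h_R(q)$ to the right one generator at a time via Lemma \ref{lem7}(b),(c): moving past $x_i$ produces the correction $\{q,x_i\}$, and moving past $y_i$ produces $h_R(\{x_i,q\})$. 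The crucial point is that $\{q,x_i\},\{x_i,q\}\in Q$ since $Q$ is a Poisson ideal, so these corrections stay inside $Q\subseteq R^{e}Q\subseteq L$; in the $y_i$-case the new element $\{x_i,q\}\in Q$ feeds back into the induction hypothesis.

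Step 2: control $\varepsilon$ on the three pieces. Right $R$-linearity of $\varepsilon$ immediately gives $\varepsilon(R^{e}M)\subseteq M$ and $\varepsilon(R^{e}Q)\subseteq Q$, since $\varepsilon(u\tilde{q})=\varepsilon(u)\tilde{q}$ for $\tilde{q}\in Q$ (resp. $M$). The essential computation is $\varepsilon(R^{e}h_R(Q))\subseteq Q$. For $w=\sum_{J}y^{J}r_{J}\in R^{e}$ (with $r_J\in R$) and $q\in Q$, I expand using the biderivation identity $h_R(r_Jq)=r_Jh_R(q)+(-1)^{|r_J||q|}q\,h_R(r_J)$ established in the proof of Proposition \ref{th3}, obtaining $w\,h_R(q)=\sum_{J}y^{J}h_R(r_Jq)-\sum_{J}(-1)^{|r_J||q|}y^{J}q\,h_R(r_J)$. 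Proposition \ref{th3} (extended from monomials to all of $R$ by linearity) shows each $y^{J}h_R(r_Jq)$ has vanishing coefficient of $1$, so $\varepsilon$ annihilates the first sum; and each $y^{J}q$ lies in $R^{e}Q$, so the second sum lies in $R^{e}Q$ and $\varepsilon$ carries it into $Q$ by the easy part above.

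Step 3: conclude. Combining Steps 1 and 2 and using $Q\subseteq M$,
\[
\varepsilon\bigl(R^{e}M+R^{e}h_R(Q)R^{e}\bigr)\subseteq \varepsilon(R^{e}M)+\varepsilon(R^{e}h_R(Q))+\varepsilon(R^{e}Q)\subseteq M+Q+Q=M,
\]
so $1$ cannot belong to the ideal. The step I expect to be the main obstacle is the key computation $\varepsilon(R^{e}h_R(Q))\subseteq Q$ in Step 2: the idea is that the biderivation identity splits $w\,h_R(q)$ into a ``pure $h_R$'' part killed by Proposition \ref{th3} and a part already pushed into the two-sided ideal $R^{e}Q$, and the bookkeeping showing that the surviving brackets lie in the Poisson ideal $Q$ (rather than being arbitrary brackets in $R$) is exactly where the Poisson-ideal hypothesis on $Q$ is genuinely used.
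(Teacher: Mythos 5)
Your proof is correct and follows essentially the same route as the paper: both arguments rest on the free right $R$-module basis $\mathfrak{C}$ of Corollary \ref{cor1}, use the commutation relations of Lemma \ref{lem7} (with the Poisson-ideal hypothesis ensuring the correction terms $\{q,x_i\}$, $\{x_i,q\}$, $\{q,r\}$ stay in $Q$) to normalize the $h_R(Q)$-terms, and invoke Proposition \ref{th3} to kill the coefficient of $1$. Your explicit projection $\varepsilon$ and the preliminary reduction $R^{e}h_R(Q)R^{e}\subseteq R^{e}h_R(Q)+R^{e}Q$ are just a tidier packaging of the paper's term-by-term rewriting of $\sum_i c_i h_R(q_i)a_i$.
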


\begin{proof}
Since $R^{e}$ is a DG free right $R$-module by Corollary \ref{cor1}, every
element of $R^{e}M+R^{e}h_R(Q)R^{e}$ is of the form
\begin{center}
$\sum_{i}b_im_i+c_ih_R(q_i)a_i, ~a_i, b_i, c_i\in R^{e}, ~m_i\in M,
~q_i\in Q.$
\end{center}
Express each $a_i$ as a right $R$-combination of monomials $Y_j$ in
$\mathfrak{C}$ of Corollary \ref{cor1}. Since
\begin{center}
$y_ih_R(q_i)=(-1)^{|x_i||q_i|}h_R(q_i)y_i+h_R(\{x_i, q_i\})$
\end{center}
for each $i=1,2, \cdots, n$ by $(c)$ of Lemma \ref{lem7} and since $\{x_i,
q_i\}\in Q$, we may assume that $a_i\in R$ for all $i$. Next express
each $c_i$ as a right $R$-combination of monomials $Y_j$ in
$\mathfrak{C}$ of Corollary \ref{cor1}. For $r\in R$, since
\begin{center}
$h_R(q_i)r=(-1)^{|r||q_i|}rh_R(q_i)+\{q_i, r\}$
\end{center}
by $(b)$ of Lemma \ref{lem7} and $\{q_i, r\}\in Q\subseteq M$, we may
assume that each $c_i$ is a $\Bbbk$-linear combination of monomials
in $\mathfrak{C}$ of Corollary \ref{cor1}. Thus by Proposition \ref{th3},
$\sum_{i}c_ih_R(q_i)a_i$ can be written as
\begin{center}
$\sum_{i}c_ih_R(q_i)a_i=\sum_{i}k_i'Y_i'r_i', ~k_i'\in \Bbbk, ~1\neq
Y_i'\in\mathfrak{C}$ and $r_i'\in R.$
\end{center}
Finally, express each $b_i$ as a right $R$-combination of monomials
$Y_j$ in $\mathfrak{C}$ of Corollary \ref{cor1}. Then every element of
$R^{e}M+R^{e}h_R(Q)R^{e}$ is of the form
\begin{center}
$1m_0+k_1Y_1r_1+k_2Y_2r_2+...+k_sY_sr_s, ~m_0\in M, ~k_i\in \Bbbk, ~1\neq Y_i\in\mathfrak{C}$ and $r_i\in R.$
\end{center}
Thus $R^{e}M+R^{e}h_R(Q)R^{e}$ does not contain the unity since $M\neq R$ is a DG ideal of $R$. Therefore, we complete the proof.
\end{proof}

Now, we study the simple DG Poisson module.

Let $(B, \cdot, \{\cdot, \cdot\}, d)\in \textbf{DGPA}$ and let $(B^{e}, m_B,
h_B)$ be the universal enveloping algebra of $B$, the definition of
a DG Poisson module $M$ over a DG Poisson algebra $B$ is given in
Definition \ref{defn1}. Note that the annihilator of a DG Poisson $B$-module
$M$ is defined to be
\begin{center}
$ann_B(M)=\{b\in B | ~b\cdot M=0\}$,
\end{center}
which is a DG Poisson ideal of $B$. Since $M$ is a left DG
$B^{e}$-module, it is easy to prove that
\begin{center}
$ann_B(M)=m_B^{-1}(ann_{B^{e}}(M))=ann_{B^{e}}(M)\cap B$,
\end{center}
where $B$ is a finitely generated DG Poisson algebra.

Now follow the idea of \cite{OPS}, we can define a DG symplectic
ideal. A DG Poisson ideal $Q$ of $B$ is said to be DG symplectic
ideal if there is a maximal DG ideal $M$ of $(B, \cdot, \{\cdot, \cdot\}, d)$
such that $Q$ is the largest DG Poisson ideal contained in $M$. Note
that a simple DG Poisson module over a DG Poisson algebra $B$ is the
DG Poisson module over $B$ that has no non-zero proper DG Poisson
submodule.

\begin{theorem}
Let $R=T(V)/\langle x_{\alpha}\otimes
x_{\beta}-(-1)^{|x_{\alpha}||x_{\beta}|}x_{\beta}\otimes x_{\alpha}|
~\forall \alpha, ~\beta \in\Lambda \rangle$ be a DG Poisson algebra
with an arbitrary differential $d$ and Poisson structure $\{\cdot, \cdot\}$.
Suppose that $A=R/I$ is a DG Poisson homomorphic image of $R$. If $P$ is a DG symplectic ideal of $A$, then $P$ is the
annihilator of a simple DG Poisson $A$-module.
\end{theorem}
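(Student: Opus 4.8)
The plan is to realize the desired simple module inside the universal enveloping algebra $R^{e}$ and then descend to $A$. First I would lift the data from $A=R/I$ to $R$: let $\widetilde{M}$ and $\widetilde{P}$ be the preimages of $M$ and $P$ under the projection $R\to A$. Since $I$ is itself a DG Poisson ideal contained in every such preimage, $\widetilde{M}$ is a maximal DG ideal of $R$ and $\widetilde{P}$ is the largest DG Poisson ideal of $R$ contained in $\widetilde{M}$ (any DG Poisson ideal of $R$ sitting inside $\widetilde{M}$ automatically contains $I$). Thus it suffices to produce a simple DG Poisson $R$-module $N$ with $\mathrm{ann}_R(N)=\widetilde{P}$: because $I\subseteq\widetilde{P}$, such an $N$ will descend to a DG Poisson $A$-module with $\mathrm{ann}_A(N)=\widetilde{P}/I=P$.

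To build $N$, set $L:=R^{e}\widetilde{M}+R^{e}h_R(\widetilde{P})R^{e}$, a left DG ideal of $R^{e}$, and put $N_0:=R^{e}/L$. By Lemma \ref{lem12} we have $L\neq R^{e}$, so $N_0\neq 0$; being a cyclic DG $R^{e}$-module generated by $\bar 1=1+L$, it admits a maximal proper DG $R^{e}$-submodule $N_1$ by Zorn's lemma, and I put $N:=N_0/N_1$, a simple DG Poisson $R$-module. Let $\hat 1$ denote the image of $\bar 1$ in $N$, which is nonzero because $N_1$ is proper.

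The inclusion $\widetilde{P}\subseteq \mathrm{ann}_R(N)$ reduces to showing $m_R(\widetilde{P})\,R^{e}\subseteq L$, so that $\widetilde{P}$ kills $N_0$ by left multiplication. Expanding an element of $R^{e}$ in the free left $R$-basis $\mathfrak{C}$ of Corollary \ref{cor1}, it is enough to prove $m_R(q)\,Y\in L$ for every $q\in\widetilde{P}$ and every monomial $Y=y_{i_1}\cdots y_{i_k}$. I would induct on $k$: for $k=0$ this holds since $q\in\widetilde{M}\subseteq L$, and for the inductive step I move $m_R(q)$ past the leading $y_{i_1}$ via part (a) of Lemma \ref{lem7} (applied to $R^{e}$), replacing $m_R(q)y_{i_1}$ by $\pm\,y_{i_1}m_R(q)$ plus a correction $m_R(\{x_{i_1},q\})$; as $\widetilde{P}$ is a DG Poisson ideal we have $\{x_{i_1},q\}\in\widetilde{P}$, and since $L$ is a left ideal both resulting terms lie in $L$ by induction. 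Meanwhile the bracket action of $\widetilde{P}$ on $N_0$ vanishes outright because $h_R(\widetilde{P})R^{e}\subseteq L$.

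For the reverse inclusion, write $\mathfrak{a}:=\mathrm{ann}_R(N)$, a DG Poisson ideal, and note that $\mathfrak{a}+\widetilde{M}$ is a DG ideal; if $\mathfrak{a}\not\subseteq\widetilde{M}$, maximality of $\widetilde{M}$ forces $\mathfrak{a}+\widetilde{M}=R$, so writing $1=a+m$ with $a\in\mathfrak{a}$ and $m\in\widetilde{M}$ gives $\hat 1=(a+m)\hat 1=0$, contradicting $\hat 1\neq 0$. Hence $\mathfrak{a}\subseteq\widetilde{M}$, and since $\widetilde{P}$ is the largest DG Poisson ideal inside $\widetilde{M}$ we get $\mathfrak{a}\subseteq\widetilde{P}$, so $\mathrm{ann}_R(N)=\widetilde{P}$. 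Finally, as $I\subseteq\widetilde{P}$ acts trivially on $N$ both by multiplication and by the bracket, $N$ is a simple DG Poisson $A$-module with $\mathrm{ann}_A(N)=P$. I expect the main obstacle to be the inductive containment $m_R(\widetilde{P})R^{e}\subseteq L$ of the third paragraph: this is the precise point where the Poisson-ideal property of $\widetilde{P}$ and the PBW structure of $R^{e}$ must be combined, and it is what pins the annihilator down to $\widetilde{P}$ rather than a larger ideal.
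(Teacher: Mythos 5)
Your proposal is correct and follows essentially the same route as the paper: lift $M$ and $P$ to $\widetilde{M},\widetilde{P}\subseteq R$, apply Lemma \ref{lem12} to $L=R^{e}\widetilde{M}+R^{e}h_R(\widetilde{P})R^{e}$, pass to a simple quotient of $R^{e}/L$, pin the annihilator down to $\widetilde{P}$ via the two maximality properties, and descend to $A$; your inductive verification that $m_R(\widetilde{P})R^{e}\subseteq L$ merely inlines what the paper obtains by citing Lemma \ref{lem8}, and your ``$1=a+m$'' argument is a compressed form of the paper's ``largest two-sided DG ideal contained in $N$'' step. One small slip worth fixing: a DG Poisson ideal $Q$ of $R$ contained in $\widetilde{M}$ need \emph{not} contain $I$ -- to see that $\widetilde{P}$ is the largest such ideal you should pass to $Q+I$, which is still a DG Poisson ideal inside $\widetilde{M}$ and does descend to $A$.
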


\begin{proof}
From the above, let $\pi'$ be the canonical projection from $R$ onto
$A$ and let $M$ be a maximal DG ideal of $A$ such that $P$ is the
largest DG Poisson ideal contained in $M$. By Remark \ref{rmk1}, there is
an epimorphism $\eta$ from $R^{e}$ onto $A^e$ such that
$ker(\eta)=R^{e}I+R^{e}h_R(I)R^{e}$.
\begin{eqnarray*}
\xymatrix{
  R \ar[d]_{m_R, ~h_R} \ar[rr]^{\pi'}
          &\      & A \ar[d]^{m_A, ~h_A}  \\
  R^{e} \ar[rr]_{\eta}
           &\     & A^e             }
\end{eqnarray*}
Note that $\pi'^{-1}(M)$ is a maximal DG ideal of $R$ and
$\pi'^{-1}(P)$ is the largest DG Poisson ideal contained in
$\pi'^{-1}(M)$. Since
\begin{center}
$R^{e}\pi'^{-1}(M)+R^{e}h_R(\pi'^{-1}(P))R^{e}\neq R^{e}$
\end{center}
by Lemma \ref{lem12}, there is a maximal left DG ideal $N$ of $R^{e}$
containing $R^{e}\pi'^{-1}(M)+R^{e}h_R(\pi'^{-1}(P))R^{e}$. Then
$X=R^{e}/N$ is a simple left DG $R^{e}$-module and so $X$ is a
simple left DG Poisson $R$-module.

Nota that $ann_{R^{e}}(X)$ is the largest DG ideal of $R^{e}$
contained in $N$:

since $ann_{R^{e}}(X)=\{u\in R^{e} | ~u\cdot X=0\}$, we have
\begin{center}
$0=u(u'+N)=uu'+N$
\end{center}
for all $u'+N\in X$, then $uu'\in N$. Set $u'=1$, we have $u\in N$,
that means $ann_{R^{e}}(X)\subseteq N$. If there exist a DG ideal
$B$ strictly contained in $N$ such that $ann_{R^{e}}(X)\subsetneq
B$, then there exist a $b\in B$ such that $b\notin ann_{R^{e}}(X)$,
we have $b(u_1+N)\neq 0$ for some $u_1+N\in X$, thus $bu_1\notin N$. But $b\in B, u_1\in R^{e}, B$ is a DG ideal, then $bu_1\in B\subseteq N$, which contradicts $bu_1\notin
N$. Hence $ann_{R^{e}}(X)$ is the largest DG ideal of $R^{e}$
contained in $N$.

Since $R^{e}\pi'^{-1}(P)$ is a DG ideal by Lemma \ref{lem8} and
$R^{e}\pi'^{-1}(P)\subseteq R^{e}\pi'^{-1}(M)\subseteq N$, we have
\begin{center}
$\pi'^{-1}(P)=(R^{e}\pi'^{-1}(P))\cap R\subseteq ann_{R^{e}}(X)\cap
R\subseteq N\cap R$
\end{center}
by Lemma \ref{lem8}. Since $\pi'^{-1}(M)\subseteq N\cap R\neq R$ and
$\pi'^{-1}(M)$ is a maximal DG ideal of $R$, we have
$\pi'^{-1}(M)=N\cap R$. Hence $\pi'^{-1}(P)=ann_{R^{e}}(X)\cap
R=ann_{R}(X)$ since $\pi'^{-1}(P)$ is the largest DG Poisson ideal
contained in $\pi'^{-1}(M)$.

Note that $ker(\eta)$ is a DG ideal of $R^{e}$ contained in $N$ by
Remark \ref{rmk1} since $I=ker(\pi')\subseteq \pi'^{-1}(P)$ and
$I\subseteq \pi'^{-1}(M)$. Hence $ker(\eta)X=0$ and so $X$ is a
simple left DG $A^e$-module with module structure induced by $\eta$.
Thus $X$ becomes a simple DG Poisson $A$-module.

Moreover, we have
\begin{equation}
\begin{split}
ann_A(X)&=m_A^{-1}(ann_{A^e}(X))=m_A^{-1}\eta(ann_{R^e}(X))\\
&=\pi'm_R^{-1}(ann_{R^e}(X))=\pi'(ann_{R}(X))=\pi'\pi'^{-1}(P)=P,
\end{split}\nonumber
\end{equation}
which completes the proof.
\end{proof}



\bigskip
\bibliographystyle{amsplain}

\end{document}